\documentclass[a4paper,10pt]{amsart}

\usepackage{amsmath}
\usepackage{amsthm}
\usepackage{amssymb}
\usepackage{amsfonts}
\usepackage{mathrsfs}  % for \mathscr
\usepackage{enumitem} % more enumerate/itemize options
\usepackage{tikz}
\usepackage{tikz-cd}
\usetikzlibrary{arrows.meta,calc}
\usepackage{xcolor}
\usepackage{float}
\usepackage[bookmarks=false,hyperindex,pdftex,colorlinks,citecolor=blue,urlcolor=cyan]{hyperref}
\usepackage[a4paper,lmargin=3cm,rmargin=3cm,tmargin=4cm,bmargin=4cm,marginparwidth=2.8cm,marginparsep=1mm]{geometry} % reduce side margins a bit
\DeclareMathOperator{\dist}{dist}                           % distance between sets
\DeclareMathOperator{\lspan}{span}                          % linear span
\DeclareMathOperator{\conv}{conv}                           % convex hull
\DeclareMathOperator{\diam}{diam}                           % diameter
\DeclareMathOperator{\Lip}{Lip}                             % Lipschitz functions
\newcommand{\RR}{\mathbb{R}}                                % real numbers
\newcommand{\HH}{\mathcal{H}}                               % Hausdorff measure
\newcommand{\ep}{\varepsilon}
\newcommand{\one}{\mathbf 1}

\newcommand{\abs}[1]{\left|{#1}\right|}                     % absolute value
\newcommand{\pare}[1]{\left({#1}\right)}                    % parentheses
\newcommand{\set}[1]{\left\{{#1}\right\}}                   % set by extension
\newcommand{\norm}[1]{\left\|{#1}\right\|}                  % norm
\newcommand{\duality}[1]{\left<{#1}\right>}                 % dual action
\newcommand{\cl}[1]{\overline{#1}}                          % closure
\newcommand{\wscl}[1]{\overline{#1}^{w^*}}                  % weak* closure
\newcommand{\lipfree}[1]{\mathcal{F}({#1})}                 % Lipschitz-free space
\newcommand{\lipnorm}[1]{\norm{#1}_L}                       % Lipschitz norm/constant
\newcommand{\ideal}[1]{\mathcal{I}({#1})}
\newcommand{\const}[1]{\mathcal{K}({#1})}

\renewcommand{\leq}{\leqslant}
\renewcommand{\geq}{\geqslant}

\theoremstyle{plain}
\newtheorem{theorem}{Theorem}[section]
\newtheorem{lemma}[theorem]{Lemma}

\newtheorem{proposition}[theorem]{Proposition}

\newtheorem*{claim*}{Claim}
\newtheorem{fact}[theorem]{Fact}

\theoremstyle{definition}
\newtheorem*{definition*}{Definition}
\newtheorem{definition}[theorem]{Definition}

\newtheorem{remark}[theorem]{Remark}

\hypersetup{pdftitle={Lipschitz-free spaces are strongly unique preduals},
  pdfauthor={Ramon J. Aliaga, Marek Cuth, Felipe Vico}}

\begin{document}
\title{Lipschitz-free spaces are strongly unique preduals}

\author[R. J. Aliaga]{Ram\'on J. Aliaga}
\address[R. J. Aliaga]{Instituto Universitario de Matem\'atica Pura y Aplicada,
Universitat Polit\`ecnica de Val\`encia,
Camino de Vera S/N, 46022 Valencia, Spain}
\email{ramon.aliaga@upv.es}

\author[M. C\'uth]{Marek C\'uth}
\address[M. C\'uth]{Charles University, Faculty of Mathematics and Physics,
Department of Mathematical Analysis,
Sokolovsk\'a 83, 186 75 Prague 8, Czech Republic}
\email{marek.cuth@matfyz.cuni.cz}

\author[F. Vico]{Felipe Vico}
\address[F. Vico]{Instituto de Telecomunicaciones y Aplicaciones Multimedia,
Universitat Polit\`ecnica de Val\`encia,
Camino de Vera S/N, 46022 Valencia, Spain}
\email{fevibon@teleco.upv.es}

%\date{}

\begin{abstract}
We prove that the Lipschitz-free space $\mathcal{F}(M)$ is the strongly unique isometric predual of $\mathrm{Lip}_0(M)$ for every metric space $M$, solving a longstanding open problem of Weaver.
The result is proved first for length metric spaces by analyzing the behavior of Lipschitz functions on paths in $M$ and the topological properties of subspaces of functions vanishing on certain barrier-sets of $M$.
Then, the result is extended to general metric spaces by embedding them into length spaces in such a way that the restriction operator for Lipschitz functions is weak$^*$ continuous with respect to their selected preduals.
Finally, we also give a counterexample to the codimension-one inheritance assertion used in Weaver's original proof for bounded and geodesic metric spaces.
\end{abstract}

\subjclass[2020]{Primary 46B10; Secondary 46E15, 54E50}
% 46B10: Duality and reflexivity in normed linear and Banach spaces
% 46E15: Banach spaces of continuous, differentiable or analytic functions
% 54E50: Complete metric spaces
\keywords{Unique predual, strongly unique predual, Lipschitz space, Lipschitz-free space,
length space, strongly exposed point.}
\maketitle
%\enlargethispage{2pt}

\section*{AI disclosure statement and note on the current version}

The main results in this document were obtained by OpenAI's GPT-5.6 Sol Pro and GPT-6 Pro (powered by GPT-6 Astra), with relatively limited mathematical input from the authors.

We initially presented the AI with the problem together with the relevant papers of Weaver. It identified a counterexample to one of Weaver's lemmas, and we then asked it to look for a corrected argument. The solution emerged iteratively: the AI first obtained proofs in several special cases, while we suggested which cases seemed useful to investigate next, and was eventually asked to synthesize these into a proof of the general result. No additional mathematical ideas from the authors were needed in this discovery process.

We subsequently examined the AI-generated arguments in detail, independently verified the proofs, and substantially reorganized, streamlined, and edited their presentation. We take full responsibility for the mathematical content of the manuscript. The proofs given here remain, in substance, those generated by the AI, but their present form reflects our careful verification and restructuring.

This is an early version of the manuscript; additional exposition and polishing will be added later. We have chosen to make it public already so that the main result can be used in further work and the proof can be independently examined by the community.

\section{Introduction}

Let $X$ be a dual Banach space. We say that $X$ has a \emph{unique predual} if there is a unique, up to linear isometry, Banach space $Y$ such that $Y^*$ is linearly isometric to $X$. We say that $X$ has a \emph{strongly unique predual} if moreover any linear isometry $T:X\to Y^*$ onto a dual Banach space is weak$^*$-to-weak$^*$ continuous, hence the adjoint of an isometry between the corresponding preduals. The latter is a formally stronger property, and it is currently unknown whether both properties are actually equivalent. Examples of spaces that are strongly unique preduals (i.e. such that their duals have strongly unique preduals) include spaces with the Radon-Nikod\'ym property \cite{Godefroy_predual}; preduals of von Neumann algebras, including the usual $L_1$-spaces \cite{Godefroy_predual}; and separable L-embedded spaces \cite{Pfitzner_x}. The sequence space $\ell_1$ is maybe the simplest example of a dual Banach space failing to have a unique predual.

In this paper, we consider the unique predual problem for Lipschitz spaces, i.e. Banach spaces of real-valued Lipschitz functions on a metric space $M$. This problem goes back, at least, to the first edition of Weaver's monograph \cite{Weaver2} in 1999. The theory of Lipschitz spaces has many analogies with the theory of von Neumann algebras \cite{AP22,Weaver2,Weaver96_3}, and so one could expect them to have strongly unique preduals as well. Our main result confirms the validity of that conjecture.

\begin{theorem}\label{thm:main}
For every metric space $M$, the Lipschitz-free space
$\lipfree{M}$ is the strongly unique isometric predual of the
Lipschitz space $\Lip_0(M)$.
\end{theorem}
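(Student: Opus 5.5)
The strategy follows the abstract: first handle length spaces, then reduce the general case to them. Throughout, fix a linear isometry $T\colon \Lip_0(M)\to Y^*$ onto a dual space. Equivalently, we have a second weak$^*$ topology $\tau$ on $\Lip_0(M)$, namely the one induced by a predual $Y$. The goal is to show that $\tau$ coincides with the usual weak$^*$ topology $w^*$ (the one induced by $\lipfree{M}$) on bounded sets. By the Krein--Smulian theorem, it is enough to check that each evaluation functional $f\mapsto f(p)$, $p\in M$, is $\tau$-continuous on the unit ball $B_{\Lip_0(M)}$. Equivalently, $\delta(p)\in Y\subset \Lip_0(M)^*$ for every $p$. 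Since the $\delta(p)$ span a dense subspace of $\lipfree{M}$ and $Y$ is norm closed, this gives $\lipfree{M}\subseteq Y$. The reverse inclusion then follows from the fact that both spaces are isometric preduals of the same space.

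\emph{Step 1 (length spaces).} Suppose $M$ is a length space. For $p\in M$, I would like to identify the weak$^*$-closed subspace $\{f: f(p)=0\}$ purely in terms of the metric and Banach-space structure of $\Lip_0(M)$, so that it is automatically $\tau$-closed as well. To do this, I use the isometric geometry of the unit ball. Along a path $\gamma$ from the base point $0$ to $p$ of length close to $d(0,p)$, a function of norm one attaining $f(p)=d(0,p)$ must have slope one along $\gamma$. This kind of rigidity lets one recognise functions such as $d(\cdot,0)-d(\cdot,p)$, or the norming functionals, as strongly exposed points of suitable faces of $B_{\Lip_0(M)}$, and such faces are preserved by any isometry. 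Concretely, I would work with subspaces $\{f : f|_B=0\}$, where $B$ is a ``barrier set'' separating regions of $M$. I would first show that these subspaces are $\tau$-closed: they can be written as annihilators cut out by norm-attainment or face conditions, which are intrinsic to the norm, together with the Krein--Smulian theorem. Then I would express $\ker\delta(p)$ as an intersection or limit of such subspaces. The main obstacle is this step, namely proving the $\tau$-closedness of the barrier subspaces using only the norm, because Weaver's codimension-one inheritance argument fails here. I would first try to show that a $\tau$-convergent bounded net of functions vanishing on $B$ has a limit that vanishes on $B$. The idea is to argue by contradiction: I would perturb by path-based functions so as to increase the norm above one, contradicting $\tau$-lower semicontinuity of the norm.

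\emph{Step 2 (general $M$).} Given an arbitrary $M$, I embed it isometrically into a length space $\widetilde M$, for instance by attaching segments between every pair of points of $M$ (a metric graph, or an $\ell_\infty$-type construction), arranged so that $M$ is closed in $\widetilde M$. The restriction map $R\colon \Lip_0(\widetilde M)\to\Lip_0(M)$ is a quotient map, and it is the adjoint of the inclusion $\lipfree{M}\to\lipfree{\widetilde M}$. The key point is to transport $\tau$ to $\Lip_0(\widetilde M)$ by means of a predual for which $R$ is weak$^*$ continuous. For this I would use the McShane-type extension $E\colon \Lip_0(M)\to\Lip_0(\widetilde M)$ along the graph. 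It can be chosen linear when segments are filled by linear interpolation, which makes it an isometric linear extension operator. Then $\Lip_0(\widetilde M)\cong \Lip_0(M)\oplus \ker R$, and one can build a predual of $\Lip_0(\widetilde M)$ from $Y$ and $\lipfree{\widetilde M}/\lipfree{M}$-type data. Step 1 applied to $\widetilde M$ then forces this predual to equal $\lipfree{\widetilde M}$. Restricting back along $R$ yields $Y=\lipfree{M}$, and also the weak$^*$ continuity of $T$. Checking that the construction really produces an isometric predual, and that $R$ is weak$^*$ continuous for it, is the second delicate point.
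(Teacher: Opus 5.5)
Your opening reduction is fine: it suffices to get $\delta(p)\in Y$ for every $p$, and then non-nesting of preduals finishes. But both steps have genuine gaps, and the one in Step~2 is fatal as stated.

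If you attach a segment $I_e$ between \emph{every} pair $p_e\neq q_e$, the predual $Y_N$ cannot be built from $Y$. Write $u=Ef+k$ and let $k_e'$ be the derivative of $k$ along $I_e$. Then $\lipnorm{u}=\max\{\lipnorm{f},\sup_e\norm{(f(q_e)-f(p_e))/\ell_e+k_e'}_\infty\}$, which is not an $\ell_\infty$-sum. This norm is lower semicontinuous for a topology carrying $\sigma(\Lip_0(M),Y)$ on the first summand only if every $\delta(q_e)-\delta(p_e)$ lies in $Y$. With all pairs bridged, that requirement is exactly $\lipfree{M}\subset Y$. More bluntly, once Step~1 is known, any predual $Y_N$ with $R^*Y\subset Y_N$ must equal $\lipfree{\widetilde M}$, and then $Y=\lipfree{M}$ follows at once. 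So producing $Y_N$ is the whole problem, not a technical check.

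The missing idea is to bridge only the pairs failing property~(Z). The enlargement is still a length space by Theorem~\ref{thm:AMC}: pairs with (Z) keep their witnesses, and bridged pairs acquire midpoints. By Theorem~\ref{thm:GLPRZ}, the bridged molecules $m_{p_eq_e}$ are strongly exposed points of $B_{\lipfree{M}}$. By the Goldstine argument of Lemma~\ref{lem:exposed}, they therefore lie in every predual. That is precisely what lets you take $Y_N$ as the quotient of $Y\oplus_1(\bigoplus_e L_1[0,\ell_e])_{\ell_1}$ by the closed span of the vectors $(\delta(p_e)-\delta(q_e),\one_{[0,\ell_e]})$. The exposed-point idea you float in Step~1 belongs here instead. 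In a length space every pair has (Z), so $B_{\lipfree{M}}$ has no strongly exposed points at all.

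Step~1, as written, restates the problem rather than giving a mechanism. An intersection of subspaces $\{f:f|_B=0\}$ is again of that form, with the union of the $B$'s. Reaching the hyperplane $\ker\delta(p)$ this way therefore forces $B\subset\{0,p\}$, which is the original question. Nor are ``limits'' of $\tau$-closed subspaces, such as closures of increasing unions, $\tau$-closed in general. Your perturbation/lower-semicontinuity idea is essentially Weaver's ball argument, and the paper does use it (Proposition~\ref{prop:balls}, and $B_{\const{W}}=B_{\const{A}}+B_{\const{B}}$ in Lemma~\ref{lem:boundary}). It cannot be the whole story, though. For $M=[0,1]$ every such boundary $W$ is a single point, and functions constant on intervals only encode the $M$-summand structure of $\Lip_0([0,1])\cong L_\infty[0,1]$. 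The claim then becomes the strong uniqueness of $L_1$ as the predual of $L_\infty$, a nontrivial theorem that you must import.

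The paper argues dually. For $\Phi\in Y^\perp$ and $f=\Phi\circ\delta$, it shows $(f\circ\gamma)'=0$ a.e.\ along an injective arclength path $\gamma$ from $p$ to $q$. It splits according to the cut set $S$ of points separating $p$ from $q$.
\begin{itemize}
\item On $S$, which is isometric to a compact $E\subset[0,d(p,q)]$, there is an $M$-projection $P$ onto a copy of $L_\infty(E)$. Lemma~\ref{lem:Mprojection} and the strong uniqueness of $L_1(E)$ give $P^*\lipfree{M}\subset Y$, hence $Pf=0$.
\item Off $S$, a detour avoiding $\gamma(t)$ puts the outer points in a single component $C_h$ of the complement of a small lens around $\gamma(t)$. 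Then $\const{\partial C_h}$ is universally weak$^*$ closed. Near-norming elements of the induced predual of $\Lip_{w_0}(\partial C_h)$, tested on an explicit function, pull back to elements of $Y$ within $64\ep$ of $m_{p_hq_h}$.
\end{itemize}
Neither the cut-set/$L_\infty$ reduction nor this approximate-molecule construction appears in your plan.
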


Prior to this paper, the main advances in the unique predual problem had been obtained by Weaver. In \cite{Weaver18}, Weaver proved strong uniqueness for spaces $\Lip(M)$ of bounded Lipschitz functions \cite[Theorem~2.4]{Weaver18}, with a proof reminiscent of Sakai's original proof for von Neumann algebras \cite{Sakai}. He then deduced Theorem~\ref{thm:main} for the case where $M$ is bounded by means of a lemma, attributed to U.~Bader, asserting that the property of having a strongly unique predual passes to weak$^*$ closed subspaces of codimension one \cite[Lemma~3.1]{Weaver18} (in the bounded case, $\Lip_0(M)$ is a weak$^*$ closed, $1$-codimensional subspace of some $\Lip(M')$). Further, Weaver provided a geometric argument proving universal weak$^*$ closedness of subspaces of Lipschitz functions vanishing on certain balls under the assumption that $M$ is geodesic, which allowed him to reduce the problem to the bounded case and thereby deduce Theorem~\ref{thm:main} for geodesic metric spaces $M$ as well \cite[Theorem~3.3]{Weaver18}.

Unfortunately, in 2026 M.~Gonz\'alez reported a gap in the proof of \cite[Lemma~3.1]{Weaver18} that called all results for $\Lip_0$ spaces into question (see \cite{Weaver_corrigendum} and the updated arXiv preprint \cite{Weaver_arxiv}, where the issue is acknowledged). It remained unclear whether the statement of \cite[Lemma~3.1]{Weaver18} itself was correct and its proof could be amended. In fact, \cite[Lemma~3.1]{Weaver18} turns out to be false: we construct an explicit counterexample in the appendix (see Theorem~\ref{thm:counterexample}).

While \cite[Lemma~3.1]{Weaver18} is false, the rest of Weaver's arguments in \cite{Weaver18} remain valid and they serve as the starting point for our proof of Theorem~\ref{thm:main}. This includes his reduction to the bounded case (see Remark~\ref{rem:bounded-reduction}), as well as his argument for universal weak$^*$ closedness for balls, which we first extend from the geodesic case to the general case (Proposition~\ref{prop:balls}).

We first prove Theorem~\ref{thm:main} under the assumption that $M$ is a length space (Theorem~\ref{thm:length}). To do this, we extend Weaver's universal weak$^*$ closedness argument to certain ``cut-sets'' of $M$ (Lemma~\ref{lem:boundary}), and then relate the structure of Lipschitz spaces over those sets with that of $L^\infty$ spaces (see Lemma~\ref{lem:cut-projection2}). Thus, we ultimately reduce the unique predual problem to that of $L^\infty$ spaces, which is known to have a positive answer.

The general case of Theorem~\ref{thm:main} is then proved by embedding the metric space $M$ into a length space $N$, obtained by adding geodesic segments between selected pairs of points in $M$ in such a way that preduals of $\Lip_0(M)$ can be related to preduals of $\Lip_0(N)$ (Proposition~\ref{prop:compatible-predual}), thereby reducing the problem to the length case. This part depends crucially on the characterization of strongly exposed points of $B_{\lipfree{M}}$ obtained by Garc\'ia-Lirola, Proch\'azka and Rueda Zoca in \cite{GLPRZ}.

The rest of the paper is structured as follows. After this introduction, Section~\ref{sec:prelim} establishes the notation and some necessary facts related to preduals, Lipschitz and Lipschitz-free spaces, and length metric spaces. Section~\ref{sec:subspaces} extends Weaver's original arguments from balls to more general subsets of $M$. Theorem~\ref{thm:main} is then proved, first in Section~\ref{sec:length} for the particular case of length spaces, then in Section~\ref{sec:general} in general. Finally, the appendix provides a counterexample to \cite[Lemma~3.1]{Weaver18}.

\section{Preliminaries}
\label{sec:prelim}

\subsection{Preduals}

In what follows, all scalar fields are real, and $B_X$ and $S_X$ denote the closed unit ball and unit sphere of a Banach space $X$.
If $X$ is a dual Banach space, we may identify preduals of $X$ as subspaces of its dual $X^*$ via the canonical inclusion of every Banach space in its bidual. We use the following notation to make that explicit.

\begin{definition}
A \emph{concrete predual} of a dual Banach space $X$ is a closed subspace
$Y\subset X^*$ such that
\[
 X\longrightarrow Y^*,\qquad x\longmapsto [y\mapsto y(x)]
\]
is a surjective linear isometry. A subset of $X$ is
\emph{universally weak$^*$ closed} if it is $\sigma(X,Y)$-closed
for every concrete predual $Y$ of $X$. We similarly define universally weak$^*$ compact sets, universally weak$^*$ continuous mappings, etc.
\end{definition}

The following standard fact restates strong uniqueness in terms of concrete preduals.
We include a proof for the reader's convenience.

\begin{fact}\label{fact:concrete-preduals}
A concrete predual $E\subset X^*$ is strongly unique if and only if
it is the only concrete predual of $X$. These conditions are also
equivalent to $E\subset Y$ for every concrete predual $Y$ of $X$.
\end{fact}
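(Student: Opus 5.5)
We prove three equivalences: (a) $E$ is strongly unique; (b) $E$ is the only concrete predual of $X$; (c) $E\subset Y$ for every concrete predual $Y$. Throughout we use one basic observation: a concrete predual $Y$ determines $\sigma(X,Y)$, and every surjective isometry $T:X\to Z^*$ onto a dual space yields the concrete predual $Y_T:=\{z\circ T: z\in Z\}\subset X^*$. Here $z\in Z$ is viewed as a functional on $Z^*$ through the canonical embedding. The map $X\to Y_T^*$ is a surjective isometry because it factors as $T$ followed by the isometry $Z^*\to Y_T^*$ induced by $z\mapsto z\circ T$. This last map is an isometry onto $Y_T$ since $T$ is onto and isometric. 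Moreover, $T$ is $\sigma(X,Y_T)$-to-weak$^*$ continuous by construction.

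\textbf{(a)$\Rightarrow$(b).} Let $Y$ be any concrete predual and let $T:X\to Y^*$ be the canonical isometry. By strong uniqueness of $E$, $T$ is $\sigma(X,E)$-to-$\sigma(Y^*,Y)$ continuous. So each $y\in Y$, which as a functional on $X$ is $x\mapsto (Tx)(y)$, is $\sigma(X,E)$-continuous, and hence lies in $E$. Thus $Y\subset E$.

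For the reverse inclusion, both $Y$ and $E$ are norming closed subspaces with $X=Y^*=E^*$ isometrically. If $Y\subsetneq E$, Hahn--Banach gives $0\neq x\in X=E^*$ vanishing on $Y$. But $Y$ is norming for $X$, so $\|x\|=\sup_{y\in B_Y}|y(x)|=0$, a contradiction. Hence $Y=E$.

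\textbf{(b)$\Rightarrow$(c)} is trivial.

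\textbf{(c)$\Rightarrow$(a).} Given a surjective isometry $T:X\to Z^*$, form $Y_T$ as above. By (c), $E\subset Y_T$, and the argument just given (a proper closed subspace of a predual cannot be norming) forces $E=Y_T$. Then $T$ is $\sigma(X,Y_T)=\sigma(X,E)$-to-weak$^*$ continuous, which is (a).

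The only step that needs care is that containment of concrete preduals forces equality. The point is that a proper closed subspace of $Y^{*}{}_{*}$ has a nonzero annihilator in $X$. This needs $X$ to be identified with all of $E^*$ (surjectivity), and not merely to be normed by $E$.
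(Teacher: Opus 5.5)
Your proof is correct and follows essentially the same route as the paper: your $Y_T=\{z\circ T:z\in Z\}$ is the paper's $j_T(F)$, continuity of $T$ amounts to $Y_T\subset E$, and non-nesting of concrete preduals (via Hahn--Banach and the norming property) is the key step in each implication. The only difference is organizational: you arrange the equivalences as a cycle (a)$\Rightarrow$(b)$\Rightarrow$(c)$\Rightarrow$(a), whereas the paper first proves (b)$\Leftrightarrow$(c) from non-nesting and then (a)$\Leftrightarrow$(b).
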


\begin{proof}
Every concrete predual is norming. Consequently, concrete preduals
cannot be properly nested: if $E\subsetneq Y$ were both concrete
preduals, the Hahn-Banach theorem would give a nonzero $x\in X=Y^*$ vanishing
on $E$, a contradiction. This proves the equivalence
of the last two conditions.

Now let $F$ be a Banach space and let $T:X\to F^*$ be a surjective
linear isometry. Define
\[
 j_T:F\longrightarrow X^*,\qquad j_T(f)(x)=(Tx)(f).
\]
Then 
\[
 \norm{j_T(f)}
 =\sup_{x\in B_X}|(Tx)(f)|
 =\norm f,
\]
so $j_T(F)$ is a closed subspace of $X^*$. It is a concrete predual:
for its evaluation map
\[
 J:X\longrightarrow j_T(F)^*,\qquad
 J(x)(j_T(f))=(Tx)(f),
\]
we have
\[
 \norm{Jx}=\sup_{f\in B_F}|(Tx)(f)|=\norm x.
\]
Moreover, if $\Phi\in j_T(F)^*$, then $\Phi\circ j_T\in F^*$, so
$\Phi\circ j_T=Tx$ for some $x\in X$, and hence $\Phi=Jx$. Thus $J$
is a surjective linear isometry.

Finally,
\[
 T:(X,\sigma(X,E))\longrightarrow(F^*,\sigma(F^*,F))
\]
is continuous if and only if, for every $f\in F$, the functional
\[
 x\longmapsto (Tx)(f)=j_T(f)(x)
\]
belongs to $E$, that is, if and only if $j_T(F)\subset E$.

If $E$ is the only concrete predual, then $j_T(F)=E$ for every such
$T$, and hence $E$ is strongly unique. Conversely, let $E$ be strongly
unique and let $Y\subset X^*$ be a concrete predual. Applying the
preceding observation to its evaluation isometry
\[
 T_Y:X\longrightarrow Y^*,\qquad T_Y(x)(y)=y(x),
\]
gives $j_{T_Y}(Y)\subset E$. But
\[
 j_{T_Y}(y)(x)=T_Y(x)(y)=y(x),
\]
so $j_{T_Y}(Y)=Y$. Thus $Y\subset E$, and non-nesting gives $Y=E$.
\end{proof}

Recall that a point $x\in B_X$ is a \emph{strongly exposed point}
of $B_X$ if there exists $f\in B_{X^*}$ with $f(x)=1$
such that, for every sequence $(x_n)$ in $B_X$,
$f(x_n)\to1$ implies $\norm{x_n-x}\to0$. We then say that $f$ \emph{strongly exposes}
$x$. An elementary equivalent formulation, recalled in
\cite[Section~4]{GLPRZ}, is that, for every $\ep>0$,
there exists $\alpha>0$ such that
\[
 \{u\in B_X:f(u)>1-\alpha\}\subset x+\ep B_X.
\]
We shall need the following observation.

\begin{lemma}\label{lem:exposed}
Let $X$ be a Banach space. Every strongly exposed point of $B_X$ belongs to every concrete predual $Y\subset X^{**}$ of $X^*$.
\end{lemma}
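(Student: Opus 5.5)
Let $x\in B_X$ be a strongly exposed point, strongly exposed by some $f\in B_{X^*}$, and let $Y\subset X^{**}$ be a concrete predual of $X^*$. We regard $X\subset X^{**}$ canonically, so the goal is to show that $x\in Y$. The plan is to approximate $x$ in norm by elements of $Y$ and then use that $Y$ is closed.

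The first step uses that $Y$ is norming for $X^*=Y^*$. Since $\norm{f}=1$ and the evaluation map $X^*\to Y^*$ is an isometry, for every $n$ there is $y_n\in B_Y$ with $y_n(f)>1-1/n$.

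The second step moves this information into $X$ via Goldstine's theorem. Since $B_X$ is weak$^*$ dense in $B_{X^{**}}$, we get $y_n\in\wscl{B_X}$, with the closure taken in $X^{**}$. Fix $\ep>0$ and choose $\alpha>0$ from the equivalent formulation of strong exposedness, so that $\{u\in B_X:f(u)>1-\alpha\}\subset x+\ep B_X$. Now take $n$ with $1/n<\alpha$. The set $\{\Phi\in X^{**}:\Phi(f)>1-\alpha\}$ is weak$^*$ open and contains $y_n$. Hence every weak$^*$ neighbourhood of $y_n$ meets $B_X\cap\{f>1-\alpha\}$, which is contained in $x+\ep B_X$. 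It follows that $y_n$ lies in the weak$^*$ closure of $x+\ep B_X$ in $X^{**}$.

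The third step bounds the distance. The set $x+\ep B_{X^{**}}$ is weak$^*$ compact, so it contains that weak$^*$ closure, and therefore $\norm{y_n-x}_{X^{**}}\leq\ep$. Thus $\dist(x,Y)\leq\ep$ for every $\ep>0$, and since $Y$ is norm closed, $x\in Y$.

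The only delicate point is the second step. There we must make sure that the slice condition coming from strong exposedness, which is stated in $B_X$, transfers to the weak$^*$ closure in $X^{**}$. This works because the slice $\{f>1-\alpha\}$ is weak$^*$ open in $X^{**}$ (as $f\in X^*$) and the norm ball $x+\ep B_{X^{**}}$ is weak$^*$ closed.
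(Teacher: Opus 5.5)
Your proposal is correct and follows essentially the same route as the paper's proof. Both use the norming property of $Y$ to find an element of $B_Y$ in the slice $\{\Phi:\Phi(f)>1-\alpha\}$, then Goldstine's theorem plus weak$^*$ closedness of $x+\ep B_{X^{**}}$ to get $\dist(x,Y)\leq\ep$, and finally norm closedness of $Y$.
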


\begin{proof}
Suppose $x\in B_X$ is strongly
exposed by $f\in X^*$, normalized
so that $\norm f=f(x)=1$. Given $\ep>0$, choose $\alpha>0$ with
\[
 \{v\in B_X:f(v)>1-\alpha\}\subset x+\ep B_X.
\]
Goldstine's theorem and weak$^*$ closedness of
$x+\ep B_{X^{**}}$ imply
\[
 \{v^{**}\in B_{X^{**}}:v^{**}(f)>1-\alpha\}
 \subset x+\ep B_{X^{**}}.
\]
Indeed, a net from $B_X$ converging weak$^*$ to such a $v^{**}$ is
eventually in the displayed strict slice. Since $Y$ is norming for
$X^*$, this slice contains a member of $B_Y$. Thus
$\dist(x,Y)\leq\ep$. The norm closedness of $Y$ proves $x\in Y$.
\end{proof}

An \emph{$M$-projection} on a Banach space $X$ is a bounded
projection $P$ satisfying
\[
 \norm x=\max\{\norm{Px},\norm{(I-P)x}\}\qquad(x\in X),
\]
where $I$ is the identity operator.
The range of $P$ is called an \emph{$M$-summand}. An \emph{$L$-projection}
is defined by replacing the maximum with a sum, and its range is an
\emph{$L$-summand}. The adjoint of an $M$-projection
is an $L$-projection. In particular, $PX$ is an \emph{$M$-ideal}
in $X$: its annihilator is the $L$-summand $(I-P)^*X^*$.
See \cite[Section~I.1]{HWW} for further reference.

\begin{lemma}\label{lem:Mprojection}
Let $P$ be an $M$-projection on a dual Banach space $X$.
Then $P$ is universally weak$^*$ continuous.
If $U=PX$ has a strongly unique concrete predual $U_*\subset U^*$,
then every concrete predual $Y$ of $X$ satisfies
\begin{equation}\label{eq:Mprojection}
Y=P^*Y\oplus_1(I-P)^*Y
\quad,\quad
P^*Y=\{\lambda\circ P:\lambda\in U_*\}
\quad\text{and}\quad
(I-P)^*Y=Y\cap U^\perp.
\end{equation}
\end{lemma}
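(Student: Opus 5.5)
The plan is to first show that $U=PX$ and $V=(I-P)X$ are universally weak$^*$ closed, using only the $M$-decomposition and weak$^*$ lower semicontinuity of the norm. Weak$^*$ continuity of $P$ will follow, and then the decomposition \eqref{eq:Mprojection} will come from dualizing and from Fact~\ref{fact:concrete-preduals} applied to $U$.

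\emph{Step 1: $U$ and $V$ are universally weak$^*$ closed.} Fix a concrete predual $Y$. I claim
\[
B_U=\{x\in B_X:\ \norm{x+2v}\leq 2\ \text{for all } v\in B_V\}.
\]
If $x\in B_U$ and $v\in B_V$, then $\norm{x+2v}=\max\{\norm x,2\norm v\}\leq 2$. Conversely, let $x\in B_X$ satisfy the condition, and write $x=u+w$ with $u\in U$, $w\in V$. Suppose $w\neq0$ and take $v=w/\norm w$. Then
\[
\norm{x+2v}=\max\{\norm u,\ \norm w+2\}>2,
\]
a contradiction, so $w=0$. For each fixed $v$, the map $x\mapsto\norm{x+2v}$ is $\sigma(X,Y)$-lower semicontinuous, because $Y$ is norming and $X=Y^*$ isometrically. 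Hence $B_U$ is $\sigma(X,Y)$-closed, and the Krein--Smulian theorem shows that $U$ is $\sigma(X,Y)$-closed. The same argument with $U$ and $V$ interchanged shows that $V$ is $\sigma(X,Y)$-closed.

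\emph{Step 2: $P$ is $\sigma(X,Y)$-continuous.} By Krein--Smulian it suffices to check continuity on $B_X$. Let $(x_i)$ be a bounded net converging weak$^*$ to $x$. The nets $(Px_i)$ and $((I-P)x_i)$ are bounded, so every subnet has a further subnet along which both converge, say to $u$ and $w$. By Step 1, $u\in U$ and $w\in V$. Moreover $u+w=x$, so by uniqueness of the decomposition $u=Px$. Therefore $Px_i\to Px$ weak$^*$. Since $Y$ was arbitrary, $P$ is universally weak$^*$ continuous. Consequently $P^*Y\subset Y$ and $(I-P)^*Y\subset Y$.

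\emph{Step 3: the decomposition.} Because $P^*$ is an $L$-projection leaving $Y$ invariant, we get $Y=P^*Y\oplus_1(I-P)^*Y$. The range of $(I-P)^*$ on $X^*$ is $U^\perp$, and $(I-P)^*y=y$ for $y\in U^\perp$. Hence $(I-P)^*Y=Y\cap U^\perp$.

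For the remaining identity, let $R:P^*Y\to U^*$ be restriction to $U$. Since $\lambda=\lambda|_U\circ P$ for $\lambda\in P^*Y$, the map $R$ is injective. It is isometric because $\norm{\lambda}=\norm{\lambda\circ P}\leq\norm{\lambda|_U}$. Next, $U$ is $\sigma(X,Y)$-closed, so $U$ is isometrically the dual of $Y/(Y\cap U^\perp)$. Furthermore
\[
Y/(Y\cap U^\perp)\cong P^*Y
\]
isometrically via the $\ell_1$-decomposition, and this identification is realized by the pairing with $U$. Therefore $R(P^*Y)$ is a concrete predual of $U$. Strong uniqueness of $U_*$ and Fact~\ref{fact:concrete-preduals} give $R(P^*Y)=U_*$, that is,
\[
P^*Y=\{\lambda\circ P:\lambda\in U_*\}.
\]

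The main obstacle is Step 1, namely finding a characterization of $U$ that uses only norms and is therefore insensitive to the choice of predual. The $\max$-formula makes this work. The rest is standard Krein--Smulian and duality bookkeeping.
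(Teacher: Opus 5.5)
Your proof is correct. Step 3 follows the paper's route; the real differences are in how weak$^*$ continuity of $P$ and the predual of $U$ are obtained.

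\textbf{Weak$^*$ continuity.} The paper does not prove that $P$ is weak$^*$ continuous; it cites \cite[Theorem~I.1.9]{HWW}. Your Steps 1--2 supply a self-contained proof of that fact. It uses the norm-only description $B_U=\{x\in B_X:\norm{x+2v}\leq 2\ \forall v\in B_V\}$, weak$^*$ lower semicontinuity of the dual norm, and Krein--\v{S}mulyan. It then finishes with a compactness/subnet argument on $B_X$; the reduction to $B_X$ is justified by applying Krein--\v{S}mulyan to the kernels of the functionals $y\circ P$. This makes the lemma independent of the reference at the cost of some length. As a by-product it shows that $U$ and $(I-P)X$ are universally weak$^*$ closed.

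\textbf{The identity $P^*Y=\{\lambda\circ P:\lambda\in U_*\}$.} The paper checks directly that the restrictions $\{\eta|_U:\eta\in P^*Y\}$ form a concrete predual of $U$. They are norming because $P^*(B_Y)=B_{P^*Y}$. Evaluation is surjective because a functional on $P^*Y$ extends to $Y$ by Hahn--Banach, is represented by some $x\in X$, and is then matched by $Px$. You instead use weak$^*$ closedness of $U$ to get $U\cong(Y/(Y\cap U^\perp))^*$, and identify this quotient isometrically with $P^*Y$ through the $\ell_1$-splitting.

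The two arguments are closely related: both identify functionals on $P^*Y$ with elements of $U$ through $X=Y^*$. Yours is phrased via the standard quotient--annihilator duality. The paper's version never needs to mention closedness of $U$ separately, although that is automatic once $P$ is weak$^*$ continuous, since $U=\ker(I-P)$.
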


\begin{proof}
Fix a concrete predual $Y$ of $X$ and identify $X=Y^*$. The projection $P$ is weak$^*$ continuous by
\cite[Theorem~I.1.9]{HWW}, hence $P^*Y\subset Y$. Since $P^*$ is an $L$-projection, we get the first and last equalities in \eqref{eq:Mprojection}.

Put $E=P^*Y$ and let
$V=\{\eta|_U:\eta\in E\}\subset U^*$.
Every $\eta\in E$ satisfies $\eta=\eta\circ P$.
Since $P(B_X)=B_U$, restriction
$R:E\to V$, $R\eta=\eta|_U$, is a surjective linear isometry.
In particular, $V$ is norm closed in $U^*$.

We verify that $V$ is a concrete predual of $U$.
For $u\in U$, we have
\[
 \sup_{\lambda\in B_V}|\lambda(u)|
 =\sup_{\eta\in B_E}|\eta(u)|
 =\sup_{y\in B_Y}|y(u)|
 =\norm u,
\]
where the second equality follows from
$(P^*y)(u)=y(u)$ and $P^*(B_Y)=B_E$.
Thus the evaluation map $J:U\to V^*$ is an isometry.
To see that it is onto, take $\varphi\in V^*$.
By Hahn--Banach, $\varphi\circ R\in E^*$ extends to a
functional on $Y$, represented by some $x\in Y^*=X$.
For every $\eta\in E$,
\[
 \varphi(R\eta)=\eta(x)=\eta(Px)=(R\eta)(Px).
\]
Hence $\varphi=J(Px)$, proving surjectivity.

Strong uniqueness and Fact~\ref{fact:concrete-preduals} now give
$V=U_*$. Since $\eta=(R\eta)\circ P$ for every $\eta\in E$,
we obtain
$E=\{\lambda\circ P:\lambda\in U_*\}$,
finishing the proof of \eqref{eq:Mprojection}.
\end{proof}

\subsection{Lipschitz and Lipschitz-free spaces}

Let $M$ be a metric space with metric $d$. The closed ball with center $x\in M$ and radius $r>0$ is denoted $B(x,r)$. We assume tacitly that $M$ is a \emph{pointed} metric space, meaning that we have chosen a distinguished point $0\in M$ as a base point for our definitions. Then we define the \emph{Lipschitz space} over $M$ as
$$
\Lip_0(M) = \set{f:M\to\RR \,:\, \text{$f$ is Lipschitz and $f(0)=0$}} .
$$
This becomes a Banach space when endowed with the Lipschitz norm given by
$$
\lipnorm{f} = \sup\set{\frac{f(x)-f(y)}{d(x,y)} \,:\, x\neq y\in M}
$$
(note that, if the restriction $f(0)=0$ is lifted, $\lipnorm{\cdot}$ is merely a seminorm). If a different base point is chosen, the resulting Lipschitz space is linearly isometric to $\Lip_0(M)$ via the mapping $f\mapsto f-f(0)$.

Lipschitz spaces are dual Banach spaces. The simplest way to construct their canonical preduals is as follows. Consider the evaluation functionals $\delta_M(x)\in\Lip_0(M)^*$, $x\in M$, given by $f\mapsto f(x)$. These functionals generate a subspace
$$
\lipfree{M} = \cl{\lspan}\set{\delta_M(x)\,:\,x\in M} ,
$$
of $\Lip_0(M)^*$, called the \emph{Lipschitz-free space} over $M$. The space $\lipfree{M}$ is always a concrete predual of $\Lip_0(M)$; hence, if $\Lip_0(M)$ has a strongly unique predual, then that predual must be $\lipfree{M}$. The weak$^*$ topology induced by $\lipfree{M}$ on $\Lip_0(M)$ agrees on norm-bounded subsets of $\Lip_0(M)$ with the topology of pointwise convergence. For further reference on Lipschitz and Lipschitz-free spaces, we direct the reader to Weaver's monograph \cite{Weaver2} (where $\lipfree{M}$ is denoted $\text{\AE}(M)$ and called ``Arens-Eells space'' instead).

Since every Lipschitz function on $M$ can be uniquely extended to its completion $\cl{M}$ with the same Lipschitz constant, the Lipschitz space $\Lip_0(M)$ can be identified with $\Lip_0(\cl{M})$, and similarly for $\lipfree{M}$ and $\lipfree{\cl{M}}$. In particular, it suffices to prove Theorem~\ref{thm:main} under the assumption that $M$ is complete. Thus, in what follows we will usually assume that the ambient space $M$ is complete.

The map $\delta_M:M\to\lipfree{M}$ is an isometric embedding (we will usually omit the subscript $M$ and write simply $\delta$ when there is no risk of confusion). As a result, given two distinct points $x,y\in M$, the \emph{(elementary) molecule}
$$
m_{xy} = \frac{\delta(x)-\delta(y)}{d(x,y)}
$$
belongs to $S_{\lipfree{M}}$. The unit ball $B_{\lipfree{M}}$ is the norm-closed convex hull of the set of all molecules on $M$. Molecules will be relevant in connection to Lemma \ref{lem:exposed} through the following result by Garc\'ia-Lirola, Proch\'azka and Rueda Zoca \cite{GLPRZ}.

\begin{definition}
A pair of distinct points $x,y\in M$ has \emph{property~\textup{(Z)}} if,
for every $\ep>0$, there is $z\in M\setminus\{x,y\}$ such that
\begin{equation}\label{eq:Z}
 d(x,z)+d(z,y)-d(x,y)
 \leq\ep\min\{d(x,z),d(z,y)\}.
\end{equation}
A metric space has property~\textup{(Z)} if all its distinct pairs of points do.
\end{definition}

\begin{theorem}[{\cite[Theorem~5.4]{GLPRZ}}]
\label{thm:GLPRZ}
The strongly exposed points of $B_{\lipfree{M}}$ are precisely the molecules $m_{xy}$ corresponding to pairs of distinct points $x,y\in M$ that fail property~\textup{(Z)}.
\end{theorem}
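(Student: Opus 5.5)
We must prove the characterization: $m_{xy}$ is strongly exposed in $B_{\lipfree{M}}$ if and only if the pair $x,y$ fails property~(Z). We argue the two directions separately, working throughout with the slice description of strong exposure and the fact that $B_{\lipfree{M}}$ is the closed convex hull of the molecules.

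\emph{Necessity.} Suppose the pair $x,y$ has property~(Z), and let $f\in B_{\Lip_0(M)}$ satisfy $f(m_{xy})=1$, that is, $f(x)-f(y)=d(x,y)$. For $\ep>0$ we choose $z$ as in \eqref{eq:Z}. Then
\[
 f(x)-f(z)\geq d(x,y)-d(z,y)\geq d(x,z)-\ep\min\{d(x,z),d(z,y)\},
\]
so $f(m_{xz})\geq 1-\ep$, and symmetrically $f(m_{zy})\geq1-\ep$. The molecules $m_{xz}$ stay at distance bounded below from $m_{xy}$. This follows by testing against a suitable $1$-Lipschitz function, for instance $g=d(\cdot,z)-d(z,y)$ or a function that separates $z$ from $\{x,y\}$, and using that $z\notin\{x,y\}$ while \eqref{eq:Z} keeps $z$ comparably far from both points. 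Since no $f$ can strongly expose $m_{xy}$, it is not strongly exposed.

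\emph{Sufficiency.} Now assume the pair fails~(Z): there is $\ep_0>0$ such that
\[
 d(x,z)+d(z,y)-d(x,y)\geq\ep_0\min\{d(x,z),d(z,y)\}\qquad\text{for all } z\ne x,y .
\]
We build a peaking function $f$ in the spirit of McShane, for example
\[
 f(p)=\tfrac{d(x,y)}{2}\cdot\frac{d(p,y)-d(p,x)}{d(x,y)}
\]
modified to be strictly below slope $1$ away from the segment between $x$ and $y$, say by an infimal-convolution construction. The aim is that $f(m_{uv})>1-\alpha$ forces $u$ near $x$ and $v$ near $y$ in a scale-invariant sense. We then pass from molecules to general elements of the slice. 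For $\mu\in B_{\lipfree{M}}$ in the slice, we write $\mu$ approximately as a convex combination of molecules. Most of the mass of this combination must sit on molecules lying in a smaller slice, and these molecules are close to $m_{xy}$. The estimate $\|\mu-m_{xy}\|\to0$ then follows by a convexity (Bourgain-type slice) argument.

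The main obstacle is this quantitative step: the failure of~(Z) gives control only relative to $\min\{d(x,z),d(z,y)\}$, so we must handle molecules $m_{uv}$ with $u,v$ both very close to $x$. The strategy is to design $f$ to have local slope bounded by $1-c$ near $x$ and near $y$ at small scales, so that short molecules have $f$-value at most $1-c$. Since this is the cited result \cite[Theorem~5.4]{GLPRZ}, we rely on it as stated.
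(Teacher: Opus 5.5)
\paragraph{Verdict.} The paper gives no proof of this statement. It imports it from \cite[Theorem~5.4]{GLPRZ} and only uses the implication ``$(x,y)$ fails (Z) $\Rightarrow$ $m_{xy}$ is strongly exposed'' (with Lemma~\ref{lem:exposed}, in the proof of Theorem~\ref{thm:main}). Your final appeal to the citation is exactly what the paper does. However, the sketch before it does not prove that implication, and its concrete ingredients fail.

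\paragraph{Sufficiency (the direction the paper needs).} Your starting function is just $\tfrac12\bigl(d(\cdot,y)-d(\cdot,x)\bigr)$, and it need not even expose $m_{xy}$. Take $M=\{u,x,y,v\}\subset\ell_1^2$ with $u=(0,0)$, $x=(0,a)$, $y=(1,a)$, $v=(1,0)$, $0<a<1$. The pair $(x,y)$ fails (Z): both candidates $z\in\{u,v\}$ give excess $2a$ against $\min\{d(x,z),d(z,y)\}=a$. Yet your function equals $1$ on both $m_{xy}$ and $m_{uv}$, and the ``modification'' meant to repair this is never specified.

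Even the better normalization $\tfrac{d(x,y)}{2}\,\tfrac{d(\cdot,y)-d(\cdot,x)}{d(\cdot,y)+d(\cdot,x)}$ (Ivakhno--Kadets) fails to strongly expose $m_{xy}$ when $x$ is not isolated. Its value on $m_{xv}$ is $d(x,y)/(d(x,v)+d(v,y))\to1$ as $v\to x$, while $\norm{m_{xv}-m_{xy}}\to2$. An example is $M=[-1,0]\cup\{1\}\subset\RR$ with $x=0$, $y=1$, where (Z) fails.

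So the short molecules near $x$ and $y$ that you single out are the whole difficulty, and ``design $f$ with local slope at most $1-c$ near $x$ and $y$'' only restates the goal. Two things are missing:
\begin{itemize}
\item an explicit norming $f$; failure of (Z) with constant $\ep_0$ is what makes slope $1-c\geq 1-\ep_0$ near $x$ compatible with $f(m_{xy})=1$, since it gives $d(p,y)\geq d(x,y)-(1-\ep_0)d(p,x)$ whenever $d(p,x)\leq d(p,y)$;
\item a quantitative estimate showing that $f(m_{uv})>1-\alpha$ forces $m_{uv}$ close to $m_{xy}$ for \emph{all} configurations of $u,v$, including mixed ones.
\end{itemize}
That estimate is the actual content of \cite{GLPRZ}. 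The remaining step is elementary and needs no Bourgain-type lemma. If $\{m_{uv}:f(m_{uv})>1-\beta\}\subset m_{xy}+\ep B_{\lipfree{M}}$, then a convex combination of molecules with $f$-value $>1-\alpha$ puts weight $<\alpha/\beta$ on the other molecules. Hence it lies within $\ep+2\alpha/\beta$ of $m_{xy}$.

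\paragraph{Missing ``molecule'' part.} ``Precisely'' also asserts that every strongly exposed point of $B_{\lipfree{M}}$ is a molecule, and your reformulation drops this. A slice argument only puts such a point in the norm closure of the molecules. Showing it is itself a molecule needs more work and uses completeness of $M$: for $M=\{\tfrac1n:n\geq1\}\cup\{-1\}\subset\RR$, the strongly exposed point $m_{0,-1}$ is a molecule of $\cl M$ only.

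\paragraph{Necessity.} Your claim that (Z) keeps $z$ comparably far from both points is false. Take $M=\{x,y\}\cup\{z_n:n\geq1\}\subset\RR^2$ (Euclidean) with $x=(0,0)$, $y=(1,0)$, $z_n=(1-\tfrac1n,\tfrac1{n^2})$. The pair has (Z), but witnesses for small $\ep$ must lie near $y$, and then $\norm{m_{xz}-m_{xy}}\to0$ by Lemma~\ref{lm:distance of molecules}.

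The repair is easy. Testing with $d(\cdot,z)$ gives $\norm{m_{xz}-m_{xy}}\geq(2-\ep)\,d(z,y)/d(x,y)$ and $\norm{m_{zy}-m_{xy}}\geq(2-\ep)\,d(x,z)/d(x,y)$. You already have $f(m_{zy})\geq1-\ep$, and $\max\{d(x,z),d(z,y)\}\geq d(x,y)/2$. So one of the two molecules stays at distance at least $1-\ep/2$ from $m_{xy}$.
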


We will also need the following estimation for the distance between two molecules in $\lipfree{M}$. A similar computation can also be found in \cite[Lemma~1.2]{Veeorg}.

\begin{lemma}\label{lm:distance of molecules}
Let $p,q,u,v\in M$ with $p\neq q$ and $u\neq v$. Then
$$
\norm{m_{uv}-m_{pq}} \leq 2\,\frac{d(u,p)+d(v,q)}{d(u,v)} .
$$
\end{lemma}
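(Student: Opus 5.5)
We prove the bound by splitting the difference into an intermediate term and using the triangle inequality. The natural intermediate element is $(\delta(p)-\delta(q))/d(u,v)$: it has the numerator of $m_{pq}$ and the denominator of $m_{uv}$. We write
\[
 m_{uv}-m_{pq}
 =\frac{(\delta(u)-\delta(p))-(\delta(v)-\delta(q))}{d(u,v)}
 +\bigl(\delta(p)-\delta(q)\bigr)\Bigl(\frac{1}{d(u,v)}-\frac{1}{d(p,q)}\Bigr).
\]

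For the first term we use that $\delta$ is an isometric embedding. This gives $\norm{\delta(u)-\delta(p)}=d(u,p)$ and $\norm{\delta(v)-\delta(q)}=d(v,q)$. Hence the first term has norm at most $(d(u,p)+d(v,q))/d(u,v)$.

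For the second term, its norm is $d(p,q)\,\abs{1/d(u,v)-1/d(p,q)}=\abs{d(p,q)-d(u,v)}/d(u,v)$. Two applications of the triangle inequality in $M$ give $\abs{d(p,q)-d(u,v)}\leq d(u,p)+d(v,q)$. So the second term is also bounded by $(d(u,p)+d(v,q))/d(u,v)$.

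Adding the two bounds gives the claimed factor $2$. There is no real obstacle here. The only point needing care is choosing the intermediate point so that the denominator $d(u,v)$ appears in both pieces, rather than $d(p,q)$.
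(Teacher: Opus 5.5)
Your proposal is correct and is essentially the paper's proof. Your second term $\bigl(\delta(p)-\delta(q)\bigr)\bigl(\frac{1}{d(u,v)}-\frac{1}{d(p,q)}\bigr)$ is exactly the paper's $\frac{d(p,q)-d(u,v)}{d(u,v)}\,m_{pq}$, and you bound both pieces with the same triangle-inequality estimates.
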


\begin{proof}
The identity
$$
m_{uv}-m_{pq} = \frac{\delta(u)-\delta(p)-\delta(v)+\delta(q)}{d(u,v)} + \frac{d(p,q)-d(u,v)}{d(u,v)}\,m_{pq}
$$
gives
$$
\norm{m_{uv}-m_{pq}} \leq \frac{d(u,p)+d(v,q)+\abs{d(p,q)-d(u,v)}}{d(u,v)} \leq \frac{2(d(u,p)+d(v,q))}{d(u,v)} .
$$
\end{proof}

Let $N$ be a subset of $M$. By McShane's theorem \cite[Theorem~1.33]{Weaver2}, every Lipschitz function $f:N\to\RR$ can be extended to a function $F:M\to\RR$ without increasing its Lipschitz constant. As a consequence, if $0\in N$ then $\lipfree{N}$ can be identified isometrically with the subspace
$$
\mathcal{F}_M(N) = \cl{\lspan}\set{\delta_M(x) \,:\, x\in N}
$$
of $\lipfree{M}$ (see \cite[Theorem~3.7]{Weaver2}), and we will usually do so by omitting the subscript $M$ whenever it is clear from context. We also consider the following subspaces of $\Lip_0(M)$:
\begin{align*}
\ideal{N} &= \set{ f\in\Lip_0(M) \,:\, \text{$f(x)=0$ for all $x\in N$} }, \\
\const{N} &= \set{ f\in\Lip_0(M) \,:\, \text{$f$ is constant on $N$}}.
\end{align*}
To be more precise we should write $\mathcal{I}_M(N)$ and $\mathcal{K}_M(N)$, but again the ambient metric space $M$ will usually be fixed and so we will omit it for simplicity. While these spaces may be different in general, we clearly have $\const{N}=\ideal{N}$ if $0\in N$. In that case, we moreover have $\ideal{N}=\lipfree{N}^\perp$ in $\Lip_0(M)$, and restriction identifies $\Lip_0(M)/\ideal{N}$ isometrically with $\Lip_0(N)$.

Isometries between Lipschitz spaces induced by changes of base point
transport concrete preduals and
universal weak$^*$ closedness assertions. For $w\in W\subset M$,
we write $\Lip_w(W)$ for the Lipschitz functions on $W$ vanishing
at $w$, and identify its canonical predual
$\mathcal{F}_w(W)$ with the closed span of
$\{\delta_M(x)-\delta_M(w):x\in W\}$ in $\lipfree{M}$.

\begin{lemma}\label{lem:restriction}
Let $Y$ be a concrete predual of $\Lip_0(M)$, and let $W\subset M$ be
nonempty. Suppose that $\const{W}$ is $\sigma(\Lip_0(M),Y)$-closed.
Then $\Lip_w(W)$ admits a concrete predual $J$ such that
$R_W^*(J)\subset Y$, where
\[
 R_W:\Lip_0(M)\longrightarrow\Lip_w(W),\qquad
 R_Wf=f|_W-f(w),\quad w\in W.
\]
    In particular, $R_W$ is continuous from
$\sigma(\Lip_0(M),Y)$ to $\sigma(\Lip_w(W),J)$.
\end{lemma}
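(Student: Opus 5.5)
The plan is to realize $\Lip_w(W)$ as a quotient of $\Lip_0(M)$ and then use the standard duality between quotients of duals and weak$^*$ closed subspaces. The key identification is that $R_W$ is surjective with kernel exactly $\const{W}$. Indeed, $R_Wf=0$ if and only if $f|_W\equiv f(w)$, i.e. $f$ is constant on $W$.

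Surjectivity comes from McShane's theorem. Given $g\in\Lip_w(W)$, extend $g$ to $G$ on $M$ with the same Lipschitz constant. Then $f=G-G(0)\in\Lip_0(M)$ satisfies $R_Wf=g$ and $\lipnorm{f}=\lipnorm{g}$. Since also $\lipnorm{R_Wf}\leq\lipnorm f$, the induced map $\widetilde R:\Lip_0(M)/\const{W}\to\Lip_w(W)$ is a surjective linear isometry.

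Now set $Z=\const{W}$, which is $\sigma(\Lip_0(M),Y)$-closed by hypothesis. Identify $\Lip_0(M)=Y^*$ and put $Z_\perp=\{y\in Y: y(f)=0\ \forall f\in Z\}$. The bipolar theorem gives $(Z_\perp)^\perp=Z$, since $Z$ is weak$^*$ closed. The standard duality then yields the isometric identification $(Z_\perp)^*\cong Y^*/(Z_\perp)^\perp=\Lip_0(M)/Z$, with $\varphi\in Z_\perp$ acting on $[f]$ by $\varphi(f)$. Composing with $\widetilde R$, I define
\[
 J=\{\lambda\in\Lip_w(W)^*:\ \lambda\circ R_W\in Z_\perp\}.
\]
Because $\widetilde R$ is an isometric isomorphism, precomposition $\lambda\mapsto\lambda\circ R_W$ is an isometry from $\Lip_w(W)^*$ onto $Z^\perp\subset\Lip_0(M)^*$. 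This map is $R_W^*$. Hence $J$ is norm closed, it is isometric to $Z_\perp$, and $R_W^*(J)=Z_\perp\subset Y$.

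It remains to check that $J$ is a concrete predual of $\Lip_w(W)$. I would verify this directly. The evaluation map $\Lip_w(W)\to J^*$ is the composition of $\widetilde R^{-1}$ with the identification $\Lip_0(M)/Z\cong(Z_\perp)^*$, transported through $R_W^*$. Both are surjective isometries. For the norming part, take $g=R_Wf$. Then $\sup_{\lambda\in B_J}|\lambda(g)|=\sup_{\varphi\in B_{Z_\perp}}|\varphi(f)|=\operatorname{dist}(f,Z)=\lipnorm{g}$. For surjectivity, any functional on $J$ corresponds to one on $Z_\perp$, and such a functional is given by some $f\in Y^*$ via Hahn--Banach. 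The final continuity claim is then automatic: for $\lambda\in J$, the map $f\mapsto\lambda(R_Wf)=(R_W^*\lambda)(f)$ is given by an element of $Y$, so it is $\sigma(\Lip_0(M),Y)$-continuous.

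The only delicate point is the duality step identifying $(Z_\perp)^*$ with $Y^*/Z$. This genuinely needs the weak$^*$ closedness of $Z$; with only norm closedness one would get $Y^*/(Z_\perp)^\perp$ for a larger set. Everything else is bookkeeping.
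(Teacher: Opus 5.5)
Your proposal is correct and follows the paper's proof essentially verbatim. McShane gives $\Lip_0(M)/\const{W}\cong\Lip_w(W)$ via $R_W$, and the weak$^*$ closedness of $\const{W}$ gives $(Y\cap\const{W}^\perp)^*\cong\Lip_0(M)/\const{W}$. Your $J=(R_W^*)^{-1}(Z_\perp)$ is exactly the paper's $J=\{j_v: v\in V\}$, with $R_W^*(J)=V\subset Y$, and your direct check of the norming and surjectivity properties just spells out the quotient--annihilator duality that the paper invokes in one line.
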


\begin{proof}
The kernel of $R_W$ is precisely $\const{W}$. Moreover, $R_W$ is an isometric quotient. Indeed, let $g\in\Lip_w(W)$. By the McShane extension
theorem, there is a Lipschitz extension $G:M\to\mathbb R$ of $g$ with $\norm G_{\Lip}=\norm g$. Setting $f=G-G(0)$ gives $f\in\Lip_0(M)$, $\norm f_{\Lip}=\norm g$, and $R_Wf=g$. Hence $R_W$ induces an isometric isomorphism
\[
\Lip_0(M)/\const{W}\cong\Lip_w(W).
\]

Put $V=Y\cap\const{W}^{\perp}$.
Since $\const{W}$ is weak$^*$ closed in $\Lip_0(M)=Y^*$, the bipolar
theorem gives $\const{W}=V^\perp$. Consequently, the usual duality between quotients and annihilators
identifies $\Lip_0(M)/\const{W}$ isometrically with $V^*$. Combining this identification with
$\Lip_0(M)/\const{W}\cong\Lip_w(W)$, we obtain a concrete predual of
$\Lip_w(W)$. More explicitly, for $v\in V$ define $j_v\in\Lip_w(W)^*$ by
\[
 j_v(R_Wf)=v(f),\qquad f\in\Lip_0(M),
\]
and set $J=\{j_v:v\in V\}$. The preceding quotient identification shows that $J$ is a concrete
predual of $\Lip_w(W)$. Finally,
\[
 (R_W^*j_v)(f)=j_v(R_Wf)=v(f),
\]
so $R_W^*(J)=V\subset Y$. Finally, for each $j\in J$, the functional $j\circ R_W=R_W^*j$ belongs to $Y$ and is therefore $\sigma(\Lip_0(M),Y)$-continuous, which proves the ``In particular'' part.
\end{proof}

\subsection{Differentiability}

In what follows, $\lambda$ denotes Lebesgue measure and $\HH^1$ denotes $1$-Hausdorff measure. All almost-everywhere statements refer to Lebesgue measure or its restriction.

Let $E$ be a Borel subset of $\RR$. Recall that, by Rademacher's theorem and the fundamental theorem of calculus, any Lipschitz function $g:E\to\RR$ is differentiable at almost every point of $E$, with $\norm{g'}_\infty\leq\lipnorm{g}$. If $E=[a,b]$ is an interval with $a<b$, then $\norm{g'}_\infty=\lipnorm{g}$ and $\int_a^bg'(t)\,dt=g(b)-g(a)$ (see e.g. \cite[Corollary~1.39]{Weaver2}). All of this applies, in particular, to $g=f\circ\gamma$ when $\gamma:E\to M$ is a Lipschitz map and $f\in\Lip_0(M)$.

We also need to recall some notions about metric differentiation. These notions were introduced by Kirchheim in \cite{Kirchheim}, but we also reference \cite[Section~1.3]{AGPP} for a simpler exposition. Given a Lipschitz map $\gamma:E\to M$, its \emph{metric differential} at a non-isolated point $t\in E$ is
\[
 \operatorname{MD}(\gamma,t)
 =\lim_{\substack{s\to t\\s\in E\setminus\{t\}}}
   \frac{d(\gamma(s),\gamma(t))}{|s-t|},
\]
whenever this limit exists; see
\cite[Definition~1.7]{AGPP}.

\begin{fact}\label{fact:metric-area}
\hfill
\begin{enumerate}[label=\textup{(\roman*)}]
\item The metric differential $\operatorname{MD}(\gamma,t)$
exists for almost every $t\in E$. Moreover, if $E=[a,b]$,
then for almost every $t\in(a,b)$,
\[
 d(\gamma(t+s),\gamma(t+s'))
 =
 \operatorname{MD}(\gamma,t)|s-s'|
 +o(|s|+|s'|)
 \qquad\text{when $s,s'\to 0$.}
\]

\item (Area formula) We have
\[
 \int_E\operatorname{MD}(\gamma,t)\,d\lambda(t)
 =
 \int_{\gamma(E)}
   \#\gamma^{-1}(x)\,d\HH^1(x) .
\]
In particular,
\[
 \operatorname{MD}(\gamma,t)=0
 \quad\text{for almost every }t\in E
 \quad\Longleftrightarrow\quad
 \HH^1(\gamma(E))=0.
\]
\end{enumerate}
\end{fact}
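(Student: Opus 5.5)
Part~(i) is classical (Kirchheim \cite{Kirchheim}, see also \cite[Section~1.3]{AGPP}), so I would reduce it to ordinary differentiation of countably many real Lipschitz functions. Since $E\subset\RR$ is separable, so is $\gamma(E)$. Fix a countable dense set $\{x_n\}\subset\gamma(E)$ and put $g_n(t)=d(\gamma(t),x_n)$. Each $g_n$ is $L$-Lipschitz on $E$, where $L=\lipnorm{\gamma}$, and by density
\[
 d(\gamma(s),\gamma(t))=\sup_n|g_n(s)-g_n(t)|
 \qquad(s,t\in E).
\]
Equivalently, $t\mapsto(g_n(t)-g_n(a))_n$ is a Kuratowski-type isometric copy of $\gamma$ in $\ell_\infty$. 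Extending each coordinate by McShane's theorem to the interval $[\inf E,\sup E]$ (or to all of $\RR$), with the same constant, lets me assume $E=[a,b]$. The points of $E$ that are not density points form a null set, and at density points of $E$ the limits along $E$ and along the interval coincide. So it suffices to treat a Lipschitz curve $[a,b]\to\ell_\infty$ whose distance is the sup of the coordinate differences.

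Define $m(t)=\sup_n|g_n'(t)|$. This is a measurable function bounded by $L$, defined off the null set where some $g_n$ fails to be differentiable. I would then take $t\in(a,b)$ such that every $g_n$ is differentiable at $t$ and $t$ is a Lebesgue point of $m$, and show that $\operatorname{MD}(\gamma,t)=m(t)$ in the uniform two-variable form.
\begin{itemize}
\item \emph{Upper bound.} For every $n$, $|g_n(t+s)-g_n(t+s')|\leq\int_{t+s'}^{t+s}m$. With $h=|s|+|s'|$,
\[
 \Bigl|\int_{t+s'}^{t+s}m-m(t)|s-s'|\Bigr|\leq\int_{t-h}^{t+h}|m-m(t)|=o(h)
\]
at a Lebesgue point.
\item \emph{Lower bound.} Given $\ep>0$, pick $n$ with $|g_n'(t)|>m(t)-\ep$. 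Differentiability of $g_n$ at $t$ gives
\[
 g_n(t+s)-g_n(t+s')=g_n'(t)(s-s')+o(h),
\]
so $d(\gamma(t+s),\gamma(t+s'))\geq(m(t)-\ep)|s-s'|-o(h)$.
\end{itemize}
Letting $\ep\to0$ gives the displayed expansion, and taking $s'=0$ gives existence of $\operatorname{MD}$ a.e.

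For (ii), I would follow the standard proof of the area formula.

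\emph{Step 1: decomposition.} Split $E$ into $Z=\{\operatorname{MD}=0\}$ and countably many Borel pieces $E_k$. On each piece, $\operatorname{MD}$ is nearly a constant $c_k>0$ and
\[
 (1-\ep)c_k|s-s'|\leq d(\gamma(s),\gamma(s'))\leq(1+\ep)c_k|s-s'|.
\]
To build the pieces, I would make the $o(\cdot)$ in (i) uniform on large sets: for each $j$, the set of $t$ where the error is below $\ep\,\operatorname{MD}(\gamma,t)h$ for $h<1/j$ increases to a full-measure set. Intersecting with sets where $\operatorname{MD}$ varies little, and with intervals of length less than $1/j$, gives the $E_k$.

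\emph{Step 2: measures of images.} On each $E_k$, $\gamma$ is $(1\pm\ep)$-bi-Lipschitz after scaling by $c_k$, so $\HH^1(\gamma(A))$ lies within a factor $(1\pm\ep)$ of $\int_A\operatorname{MD}$ for Borel $A\subset E_k$. For the degenerate set I would use a Vitali covering argument: if $\operatorname{MD}<\eta$ on $Z'$, then $\HH^1(\gamma(Z'))\leq\eta\,\lambda(Z')$ up to constants. This gives $\HH^1(\gamma(Z))=0$, and likewise the image of the null set where $\operatorname{MD}$ fails to exist has measure zero since $\gamma$ is Lipschitz.

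\emph{Step 3: multiplicity.} Summing over the pieces, refined so that $\gamma$ is injective on each, yields
\[
 \int_E\operatorname{MD}\,d\lambda=\int\sum_k\one_{\gamma(E_k)}\,d\HH^1 .
\]
Refining the partitions so that each piece is injective makes $\sum_k\one_{\gamma(E_k)}$ converge to $\#\gamma^{-1}(x)$ off an $\HH^1$-null set; monotone convergence then finishes, and $\ep\to0$ removes the approximation factor. The final equivalence is immediate: $\#\gamma^{-1}\geq1$ on $\gamma(E)$, so the right-hand side vanishes iff $\HH^1(\gamma(E))=0$, while the left-hand side vanishes iff $\operatorname{MD}=0$ a.e.

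I expect Step~1 to be the main obstacle. Turning the pointwise expansion of (i) into uniform bi-Lipschitz control on pieces of almost full measure, together with the measurability bookkeeping, is where the care is needed. In the paper itself I would likely simply cite \cite{Kirchheim} and \cite[Section~1.3]{AGPP} for these details.
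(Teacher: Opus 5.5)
Your proposal is correct. For part~(i) it takes a genuinely different, self-contained route.

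\emph{What the paper does.} It embeds $M$ isometrically into a Banach space and extends $\gamma$ to $\RR$ with the same constant via \cite[Remark~1.10]{AGPP}. It then invokes Kirchheim's theorem directly, reading off $\operatorname{MD}(\gamma,t)$ as the constant $c_t$ of the seminorm $N_t(h)=c_t|h|$ at non-isolated points of $E$. Part~(ii) is a bare citation of \cite[Theorem~1.9]{AGPP} or \cite[Corollary~8]{Kirchheim}.

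\emph{What you do instead.} You prove the one-dimensional case of Kirchheim's theorem by hand. The Kuratowski-type embedding $t\mapsto(g_n(t)-g_n(a))_n$ into $\ell_\infty$ makes the extension step trivial, since McShane can be applied coordinatewise. You then identify $\operatorname{MD}$ with $m=\sup_n|g_n'|$. The Lebesgue-point estimate gives the upper bound, and a single near-optimal coordinate gives the lower bound. Together these yield the uniform two-point expansion using only a.e.\ differentiability of real Lipschitz functions and Lebesgue differentiation.

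\emph{What each route buys.} Yours is elementary and gives an explicit formula for $\operatorname{MD}$ as a countable supremum of measurable functions, so the integrand in~(ii) is visibly measurable. The paper's route buys brevity, which is reasonable for background material.

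For~(ii), your sketch is the standard proof behind the paper's citation. You defer the same uniformity and measurability bookkeeping to the literature, so the two approaches coincide in substance there.

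One simplification in your Step~3. Once each $E_k$ carries the two-sided bound $(1-\ep)c_k|s-s'|\leq d(\gamma(s),\gamma(s'))\leq(1+\ep)c_k|s-s'|$, the restriction $\gamma|_{E_k}$ is already injective. Hence $\sum_k\one_{\gamma(E_k)}(x)=\#\bigl(\gamma^{-1}(x)\cap\bigcup_kE_k\bigr)$ holds exactly. Combine this with the fact that $\gamma$ maps $Z$, and the null set where $\operatorname{MD}$ fails to exist, onto $\HH^1$-null sets. For each fixed $\ep$ this gives $\int\#\gamma^{-1}\,d\HH^1=\sum_k\HH^1(\gamma(E_k))$. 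No further refinement or monotone convergence is needed. You still need measurability of the images $\gamma(E_k)$, for instance by taking the $E_k$ compact up to null sets.
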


\begin{proof}[Sketch of the proof]
Part (i) follows from Kirchheim's theorem \cite[Theorem~2]{Kirchheim}.
To apply the theorem in its stated form, embed $M$ isometrically into a Banach space $X$ and extend $\gamma$ to a Lipschitz map $\widetilde\gamma:\RR\to X$ with the same
Lipschitz constant; see \cite[Remark~1.10]{AGPP}.
For almost every $t$, Kirchheim's theorem provides a seminorm $N_t$ on $\RR$ such that
\[
 \norm{\widetilde\gamma(t+s)-\widetilde\gamma(t+s')}
 =N_t(s-s')+o(|s|+|s'|).
\]
Writing $N_t(h)=c_t|h|$, taking $s'=0$, and letting
$s\to0$ with $t+s\in E$ identifies
$c_t=\operatorname{MD}(\gamma,t)$ at every non-isolated
point $t\in E$ where the expansion holds.
The isolated points of $E$ form a countable set,
so this proves almost-everywhere existence.
For an interval domain, the same identification gives
the displayed two-point expansion.

The area formula (ii) is \cite[Theorem~1.9]{AGPP} or \cite[Corollary~8]{Kirchheim}; by (i), $\operatorname{MD}(\gamma,t)$ exists almost everywhere so the left integral is well defined.
\end{proof}

Finally, we isolate two elementary facts about real Lipschitz functions.

\begin{fact}\label{fact:real}\hfill
\begin{enumerate}[label=\textup{(\roman*)}]
\item If two real-valued Lipschitz functions defined on subsets of $\RR$ agree on a measurable set,
then their derivatives agree almost everywhere on that set.
\item Let $H$ and $v$ be real-valued Lipschitz functions defined on intervals, with
the range of $v$ in the domain of $H$. If $H'=0$ almost everywhere
on a Borel set $E$, then $(H\circ v)'=0$ almost everywhere on
$v^{-1}(E)$.
\end{enumerate}
\end{fact}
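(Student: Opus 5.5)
For part (i), I will use Lebesgue density points and avoid any appeal to metric structure. Let $g_1,g_2$ be Lipschitz on $D_1,D_2\subset\RR$ and suppose they agree on a measurable set $A\subset D_1\cap D_2$. Set $h=g_1-g_2$, defined on $D_1\cap D_2$. By Rademacher's theorem, $g_1$ and $g_2$ are differentiable almost everywhere on their domains. It therefore suffices to take $t\in A$ such that both $g_i$ are differentiable at $t$ and $t$ is a point of Lebesgue density of $A$; almost every point of $A$ qualifies. At such a point, density yields a sequence $s_n\in A\setminus\{t\}$ with $s_n\to t$, approaching from whichever side the derivative is computed. Since $h(s_n)=h(t)=0$, the difference quotients of $h$ along $(s_n)$ vanish. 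These quotients converge to $g_1'(t)-g_2'(t)$, because each $g_i$ is differentiable at $t$ relative to its domain and $s_n$ lies in both domains. Hence $g_1'(t)=g_2'(t)$. The only thing to check is that $t$ is a non-isolated point of $A$ with derivatives taken within the domains, and density guarantees this.

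For part (ii), let $E$ be Borel with $H'=0$ almost everywhere on $E$, and put $F=v^{-1}(E)$. Split $F$ into two pieces. The first is $F_0=\{t\in F: v'(t)=0\}$. The second is $F_1=\{t\in F: v'(t)\neq 0\}$, up to the null set where $v$ is not differentiable.

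On $F_0$ I will argue directly, without using differentiability of $H$. If $v'(t)=0$, then $|H(v(s))-H(v(t))|\le \lipnorm{H}\,|v(s)-v(t)|=o(|s-t|)$, so $(H\circ v)'(t)=0$.

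On $F_1$ the chain rule is available at every point $t$ where $H$ is differentiable at $v(t)$, and there $(H\circ v)'(t)=H'(v(t))\,v'(t)$. It is therefore enough to show that the set $\{t\in F_1: v(t)\in Z\}$ is null, where $Z\subset E$ is the null set on which $H$ is either not differentiable or has $H'\neq 0$. This is the standard fact that a Lipschitz function maps no set of positive measure on which $v'\neq0$ into a null set. I will prove it as follows. Decompose $\{v'\neq0\}$ into countably many Borel pieces $P_{k,j}$. On each piece, $|v'|>1/k$, and $|v(s)-v(t)|\ge |s-t|/(2k)$ whenever $t\in P_{k,j}$ and $s$ lies within $1/j$ of $t$. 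Cutting further into pieces of diameter less than $1/j$ makes $v$ bi-Lipschitz on each piece. Consequently the preimage of the null set $Z$ in each piece is null, and a countable union of null sets is null.

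Combining the two pieces gives $(H\circ v)'=0$ almost everywhere on $F$. The main care needed is in this last bi-Lipschitz decomposition, and in the measurability of $P_{k,j}$. Measurability follows because $v'$ is Borel and the defining inequality can be tested over rational $s$, using continuity of $v$. Alternatively, I could cite the standard result that $\lambda(v(A))\le\int_A|v'|$ together with $\lambda(v^{-1}(Z)\cap\{v'\ne0\})=0$.
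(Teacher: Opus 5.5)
Your proof is correct. Part (i) is the paper's argument. You use Lebesgue density points of the coincidence set, where the paper uses the fact that the isolated points of a subset of $\RR$ form a countable set; either device just supplies a sequence in the set converging to $t$.

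Part (ii) takes a genuinely different route. The paper never splits according to $v'$ and never uses a chain rule. From the area formula (Fact~\ref{fact:metric-area}(ii)) it derives that, for any Lipschitz $w$ on an interval and any Borel $B$, $w'=0$ almost everywhere on $B$ if and only if $\lambda(w(B))=0$. It then applies this twice. First it applies it to $H$ on $E$ to get $\lambda(H(E))=0$. Then it applies it to $H\circ v$ on $v^{-1}(E)$, whose image lies in $H(E)$.

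Your case split replaces the first application by the pointwise observation that $H'(v(t))=0$ forces $(H\circ v)'(t)=0$. Your bi-Lipschitz decomposition is an elementary proof of exactly the backward implication of that equivalence, for $w=v$ and $B=v^{-1}(Z)$. Only outer measure enters there, so the measurability of the pieces $P_{k,j}$ is not actually needed.

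What each approach buys: the paper's version is three lines, because Kirchheim's area formula is already in place for Lemma~\ref{lem:arc}. Yours is self-contained, relying only on Rademacher's theorem and the fact that Lipschitz maps on subsets of $\RR$ send null sets to null sets.

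Incidentally, the inequality $\lambda(v(A))\le\int_A|v'|$ in your closing aside is not needed by your argument. Applied to $H$ on $E$, it gives $\lambda(H(E))=0$, which is precisely the first step of the paper's route.
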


\begin{proof}
For (i), let $E$ be the set of points where the two functions $f,g$
agree and both are differentiable. This has full measure in
the set where they agree. Since the isolated points of $E$ form a countable set, almost every $x\in E$ is non-isolated. For each such $x$, choose $x_n\in E\setminus{x}$ with $x_n\to x$. Writing $h=f-g$, we obtain
\[
 h'(x)=\lim_{n\to\infty}\frac{h(x_n)-h(x)}{x_n-x}=0,
\]
since $h$ vanishes on $E$. Thus $f'=g'$ almost everywhere
on the set where they agree.

For (ii), first let $w$ be any real-valued Lipschitz function
on an interval and let $B$ be a Borel subset of that
interval. At every non-isolated point $t\in B$ where
$w$ is differentiable, we have
$\operatorname{MD}(w|_B,t)=|w'(t)|$.
Such points have full measure in $B$, since the isolated
points of $B$ form a countable set.
Fact~\ref{fact:metric-area}(ii), together with the
equality of $\HH^1$ and $\lambda$ on $\RR$, therefore gives
\[
 w'=0\text{ almost everywhere on }B
 \quad\Longleftrightarrow\quad
 \lambda(w(B))=0.
\]
Applying this equivalence with $w=H$ and $B=E$ yields
$\lambda(H(E))=0$. Since
$(H\circ v)(v^{-1}(E))\subset H(E)$, the image of
$v^{-1}(E)$ under $H\circ v$ is also null.
A second application, with $w=H\circ v$ and
$B=v^{-1}(E)$, proves (ii).
\end{proof}

\subsection{Length spaces}

A \emph{path} in $M$ is a continuous map $\gamma:[a,b]\to M$.
Its \emph{length} is
\[
 L(\gamma)=
 \sup\left\{
   \sum_{i=1}^{n}d\bigl(\gamma(t_{i-1}),\gamma(t_i)\bigr):
   a=t_0<t_1<\cdots<t_n=b
 \right\},
\]
and $\gamma$ is \emph{rectifiable} if $L(\gamma)<\infty$. The path $\gamma$ is \emph{parametrized by arclength}
if $L(\gamma|_{[u,v]})=v-u$ whenever $a\leq u\leq v\leq b$.
Such a path is $1$-Lipschitz.

We recall the following standard fact about arclength
parametrization; for the reader's convenience, we refer to
\cite[Proposition~2.5.9]{BBI}. Every rectifiable path
$\gamma:[a,b]\to M$ of length $L>0$ factors as
$\gamma=\widetilde\gamma\circ s$, where
$s:[a,b]\to[0,L]$ is a continuous nondecreasing surjection
and $\widetilde\gamma:[0,L]\to M$ is parametrized by
arclength. We call $\widetilde\gamma$ an
\emph{arclength parametrization} of $\gamma$.
It has the same image and endpoints as $\gamma$, it is
injective whenever $\gamma$ is injective, and it is always $1$-Lipschitz.
A path of length zero is constant.

A complete metric
space $M$ is called a \emph{length space} if, for every
$x,y\in M$ and $\delta>0$, there is a
rectifiable path $\gamma$ joining $x$ to $y$ with
$L(\gamma)<d(x,y)+\delta$. Equivalently, for every $x,y\in M$ and $\ep>0$,
there exists $z\in M$ such that
\[
 \max\{d(x,z),d(z,y)\}\leq\tfrac12d(x,y)+\delta.
\]
This equivalence is standard;
for the reader's convenience, we refer to
\cite[Lemma~2.4.10 and Theorem~2.4.16(2)]{BBI}\footnote{To compare the midpoint conventions, the triangle inequality gives
$\min\{d(x,z),d(z,y)\}\geq d(x,y)/2-\delta$.
Thus our midpoint condition agrees with the one used there
after rescaling the error.}.

We shall need the following characterization, due to Avil\'es and Mart\'inez-Cervantes \cite{AMC}.

\begin{theorem}[\cite{AMC}]\label{thm:AMC}
A complete metric space is a length space if and only if it has
property~\textup{(Z)}.
\end{theorem}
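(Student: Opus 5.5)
The two directions are of very different difficulty, so I would treat them separately. Throughout, $M$ is complete and $x\neq y$, with $D=d(x,y)$.

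\emph{Length $\Rightarrow$ (Z).} Fix $\ep>0$ and $\eta>0$, and take a rectifiable path $\gamma:[a,b]\to M$ from $x$ to $y$ with $L(\gamma)<D+\eta$. The function $t\mapsto d(x,\gamma(t))$ is continuous and runs from $0$ to $D$. Hence there is $t$ with $z=\gamma(t)$ satisfying $d(x,z)=D/2$. The triangle inequality gives $d(z,y)\geq D/2$, so $z\notin\{x,y\}$ and $\min\{d(x,z),d(z,y)\}\geq D/2$. Splitting $\gamma$ at $t$ gives
\[
 d(x,z)+d(z,y)\leq L(\gamma)<D+\eta .
\]
Choosing $\eta=\ep D/2$ yields \eqref{eq:Z}. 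This direction is routine.

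\emph{(Z) $\Rightarrow$ length.} By the midpoint characterization recalled above, it suffices to produce, for each $\delta>0$, a point $z$ with $\max\{d(x,z),d(z,y)\}\leq D/2+\delta$. The difficulty is that (Z) alone gives a point $z$ with small relative excess, but $z$ may lie arbitrarily close to $x$ or to $y$. I would therefore build $z$ by a chaining argument, which is where completeness enters.

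Fix a small $\ep>0$. Call $z$ \emph{$\ep$-reachable} if there is a finite (or convergent countable) chain $x=p_0,p_1,\dots,p_k=z$ with chain length $\ell=\sum_i d(p_{i-1},p_i)$ such that
\[
 \ell+d(z,y)\leq D+\ep\ell .
\]
Reachable points satisfy the two-sided bound
\[
 d(x,z)\leq\ell\leq \frac{D-d(z,y)}{1-\ep} .
\]
So they lie almost on a geodesic from $x$ to $y$, and the excess $d(x,z)+d(z,y)-D$ is at most $\ep\ell\leq\ep D/(1-\ep)$. Let $s$ be the supremum of chain lengths $\ell\leq D/2$ of reachable points. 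The plan has two steps.
\begin{enumerate}[label=\textup{(\alph*)}]
\item The supremum is attained. Take chains whose lengths increase to $s$. If these can be arranged to extend each other, the successive endpoints form a Cauchy sequence, because consecutive distances are bounded by length increments. Completeness then gives a limit point with chain length $s$, and the defining inequality passes to the limit.
\item $s$ is close to $D/2$. If $s<D/2-\delta$, apply (Z) to the pair $(z,y)$ with parameter $\ep$. This gives $w\neq z$ with $d(z,w)+d(w,y)-d(z,y)\leq\ep\min\{d(z,w),d(w,y)\}$. Appending $w$ to the chain keeps the defining inequality and strictly increases $\ell$, which contradicts maximality provided the new length stays $\leq D/2$.
\end{enumerate}

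The main obstacle is the second step, in the case where the minimum in (Z) is $d(w,y)$. Then $w$ may overshoot far past the midpoint region. To handle this, I would first try applying (Z) again to the pair $(z,w)$, and repeatedly to the resulting sub-pairs. Each application inserts intermediate points at relative excess cost $\ep$ times the shorter piece, so the total excess stays controlled by $\ep$ times the total chain length. The aim is an intermediate point whose chain length lands in $(s,D/2]$, or within $\delta$ of $D/2$.

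A cleaner alternative, if this subdivision does not control step sizes, is to replace ``supremum over chains'' by a Zorn-type maximal chain, ordered by extension. The chain condition of Zorn's lemma would be verified via the completeness argument of (a), and a maximal chain with endpoint length short of $D/2$ would again be extended by (Z) applied to its endpoint and $y$.

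Once a reachable $z$ with $|\ell-D/2|\leq\delta$ is found, the bounds above give
\[
 d(x,z)\leq D/2+\delta \quad\text{and}\quad d(z,y)\leq D-(1-\ep)\ell\leq D/2+\delta+\ep D ,
\]
and letting $\ep\to0$ finishes the proof.
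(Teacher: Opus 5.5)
The paper does not prove this statement; it quotes it from Avilés and Martínez-Cervantes \cite{AMC}, so your proposal has to stand on its own. Your direction ``length $\Rightarrow$ (Z)'' is correct. The converse has a genuine gap at step~(b), which is where all the difficulty of the theorem lies.

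Property~(Z) gives no control over the \emph{size} of the step to the new point $w$. Even in the favourable case $d(z,w)\leq d(w,y)$ you only get $d(z,w)\leq d(z,y)/(2-\ep)$. In $M=[0,D]$, the point $w=(s+D)/2$ is a legitimate witness for the pair $(z,y)=(s,D)$. Appending it raises the chain length from $s$ to about $(D+s)/2>D/2$. So maximality among chains with $\ell\leq D/2$ is not contradicted, whichever side realizes the minimum.

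Your proposed repair does not help. Applying (Z) to $(z,w)$ and to sub-pairs may keep returning points near the far end, because the excess is only measured against the shorter side. The iterates may then converge to a point still beyond $D/2$, and nothing forces a chain length in $(s,D/2]$. The Zorn variant has the same defect: maximality is only relative to extensions respecting $\ell\leq D/2$. Without that bound, a maximal chain simply reaches $y$ and says nothing about midpoints. Step~(a) is also unjustified as written, since near-optimal chains need not extend each other, but that part can be repaired by transfinite recursion using completeness.

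What is missing is a mechanism that controls step sizes. One way to get it is to grow $\ep$-chains from both ends at once. Let the endpoints be $a$ (chain length $\ell_F$ from $x$) and $b$ (chain length $\ell_B$ from $y$), with invariant $\ell_F+d(a,b)+\ell_B\leq D+\ep(\ell_F+\ell_B)$. Apply (Z) to $(a,b)$ and attach the new point $w$ to the side where the minimum in \eqref{eq:Z} is attained. This preserves the invariant and forces the new step to have length at most $d(a,b)/(2-\ep)$. Since $\ell_F+\ell_B\leq D/(1-\ep)$ increases strictly and limits exist by completeness, transfinite recursion runs until $a=b$. This gives a chain from $x$ to $y$ of total length at most $D/(1-\ep)$ and mesh at most $D/(2-\ep)$.

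Such a chain can still jump over the midpoint: in $[0,D]$, steps from $0$ to $D/4$ and then to $5D/8$ are allowed. So you must then refine every step by the same procedure with parameters $\ep_k=2^{-k}\ep$. After $k$ rounds the mesh is of order $2^{-k}D$ and the length is at most $D\prod_{j\leq k}(1-\ep_j)^{-1}$. A chain point whose cumulative length lies within one mesh of half the total length is then the required approximate midpoint.
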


    We collect some elementary consequences of the length-space
property that will be used below.

\begin{fact}\label{fact:length-basic}
Let $M$ be a length space.
\begin{enumerate}[label=\textup{(\roman*)}]
\item
Every open ball is rectifiably path connected.
Every connected component $C$ of an open set $U\subset M$
is open and rectifiably path connected, and
$\partial C\subset\partial U$.
Moreover, $\partial C\ne\varnothing$ whenever $C\ne M$.

\item
If $F\subset M$ meets every path from $x$ to $y$, then
\[
 d(x,y)=\inf_{z\in F}\bigl(d(x,z)+d(z,y)\bigr).
\]

\item
If $F_1,F_2\subset M$ are nonempty closed sets covering $M$,
and $f:M\to\RR$ is Lipschitz on each $F_i$, then $f$ is
Lipschitz and
\[
 \lipnorm{f}
 =\max\{\lipnorm{f|_{F_1}},\lipnorm{f|_{F_2}}\}.
\]

\item
For every real Lipschitz function $f$ on $M$,
\[
 \lipnorm{f}
 =\sup_\sigma\norm{(f\circ\gamma)'}_\infty,
\]
where $\gamma:[0,L]\to M$ runs over all nonconstant
rectifiable paths parametrized by arclength.
The supremum is understood to be zero if $M$ is a singleton.
\end{enumerate}
\end{fact}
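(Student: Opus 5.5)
We prove the four items in order, each from the definition of a length space. Throughout we use the path form of the definition: points can be joined by rectifiable paths of length arbitrarily close to their distance.

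\emph{Item (i).} Let $y\in B^\circ(x,r)$ and put $\delta=r-d(x,y)>0$. Choose a rectifiable path from $x$ to $y$ of length less than $d(x,y)+\delta=r$. Every point of this path is at distance less than $r$ from $x$, so the path lies in the open ball. Hence open balls are rectifiably path connected.

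Now let $U$ be open and $C$ a connected component of $U$. Each point of $C$ has a ball inside $U$; that ball is connected, so it lies in $C$, and thus $C$ is open. Define the equivalence relation on $U$ ``joined by a rectifiable path inside $U$''. Its classes are open by the ball argument, and so are their complements. Therefore the classes are exactly the components of $U$, which shows each $C$ is rectifiably path connected.

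For the boundary, take $p\in\partial C$. Then $p\notin C$, since $C$ is open. If $p$ were in $U$, it would lie in some other component $C'$, which is open and disjoint from $C$, contradicting $p\in\overline C$. So $p\in\overline U\setminus U=\partial U$.

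Finally, suppose $C\neq M$. The space $M$ is path connected, because any two points are joined by a path. Pick $x\in C$ and $y\notin C$, and a path from $x$ to $y$. Let $t^*$ be the supremum of the times $t$ for which $\gamma([0,t])\subset C$. Then $\gamma(t^*)\in\partial C$, so $\partial C\neq\varnothing$.

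\emph{Item (ii).} The inequality $\leq$ is the triangle inequality. For $\geq$, fix $\delta>0$ and take a path $\gamma$ from $x$ to $y$ with $L(\gamma)<d(x,y)+\delta$. By hypothesis $\gamma$ passes through some $z\in F$. Splitting the path at $z$ gives
\[
d(x,z)+d(z,y)\leq L(\gamma)<d(x,y)+\delta .
\]
Letting $\delta\to0$ proves the claim.

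\emph{Item (iii).} Set $K=\max\{\lipnorm{f|_{F_1}},\lipnorm{f|_{F_2}}\}$ and take $x\in F_1$, $y\in F_2$. For $\delta>0$, take an almost-minimizing path $\gamma:[0,1]\to M$ from $x$ to $y$. Let $t^*=\sup\{t:\gamma(t)\in F_1\}$. Since $F_1$ is closed, $\gamma(t^*)\in F_1$. Every $\gamma(t)$ with $t>t^*$ lies in $F_2$, and $F_2$ is closed, so $\gamma(t^*)\in F_2$ as well; if $t^*=1$ this holds trivially because $y\in F_2$. Put $z=\gamma(t^*)\in F_1\cap F_2$. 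Then
\[
|f(x)-f(y)|\leq K\bigl(d(x,z)+d(z,y)\bigr)\leq K\bigl(L(\gamma)\bigr)<K\bigl(d(x,y)+\delta\bigr).
\]
Letting $\delta\to0$ gives $|f(x)-f(y)|\leq K\,d(x,y)$. Pairs of points lying in the same $F_i$ are covered directly by the definition of $K$, so $\lipnorm f\leq K$. The reverse inequality is trivial.

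\emph{Item (iv).} For $\leq$ in the other direction, note first that each arclength-parametrized $\gamma$ is $1$-Lipschitz. Hence $|(f\circ\gamma)'|\leq\lipnorm f$ almost everywhere, so the supremum is at most $\lipnorm f$.

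Conversely, call the supremum $S$ and fix $x\neq y$. Take a path from $x$ to $y$ with length $L<d(x,y)+\delta$, and reparametrize it by arclength as $\gamma:[0,L]\to M$. If $L=0$ the path is constant, so $x=y$, which is excluded; thus $L>0$. The function $f\circ\gamma$ is Lipschitz on $[0,L]$, so the fundamental theorem of calculus applies:
\[
|f(x)-f(y)|=\Bigl|\int_0^L(f\circ\gamma)'\Bigr|\leq S\,L<S\bigl(d(x,y)+\delta\bigr).
\]
Letting $\delta\to0$ gives $\lipnorm f\leq S$. If $M$ is a singleton, both sides are zero.

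The only point needing care is the closedness argument in (iii) that produces a point $z\in F_1\cap F_2$ on the path. The remaining steps follow directly from the definition of a length space.
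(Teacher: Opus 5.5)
Your proof is correct and follows essentially the same route as the paper in all four items. The differences are only cosmetic: you get $\partial C\neq\varnothing$ from a supremum argument along a path rather than from connectedness of $M$, and in (iii) you locate a point of $F_1\cap F_2$ on the almost-minimizing path explicitly rather than quoting (ii).
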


\begin{proof}
For \textup{(i)}, first consider an open ball of radius $r$.
Each of its points can be joined to the center by a path
of length less than $r$, which therefore stays in the ball.
Thus every open ball is rectifiably path connected.

Next, let $U\subset M$ be open. The equivalence classes
under joinability by rectifiable paths in $U$ are open,
since each point of $U$ has an open ball contained in $U$.
Each class $C$ is connected, and its complement in $U$
is a union of other open classes. Thus $C$ is both open
and closed in $U$. If $K$ is the connected component
of $U$ containing $C$, then $C$ is a nonempty subset
of $K$ that is both open and closed in $K$.
Connectedness gives $C=K$. Hence the connected components
of $U$ are open and rectifiably path connected.

Finally, such a component $C$ is relatively closed in $U$
and open in $M$, so its boundary misses $U$.
Since $\cl{C}\subset\cl{U}$, this gives
$\partial C\subset\partial U$.
If $\partial C=\varnothing$, then $C$ is also closed in $M$;
connectedness of $M$ therefore implies $C=M$.

For \textup{(ii)}, a path from $x$ to $y$ of length
less than $d(x,y)+\ep$ meets $F$ at some $z$, and hence
$d(x,z)+d(z,y)<d(x,y)+\ep$.
Let $\ep\downarrow0$ and use the triangle inequality.

For \textup{(iii)}, only pairs
$x\in F_1\setminus F_2$ and $y\in F_2\setminus F_1$
need consideration. Every path between them meets
$F_1\cap F_2$, so \textup{(ii)} and the Lipschitz bounds
on the two sets give the assertion.

In \textup{(iv)}, each $\gamma$ is $1$-Lipschitz, giving
the inequality $\geq$. For the reverse inequality, denote the right-hand side by $K$.
Given $x\ne y$ and $\ep>0$, choose an arclength-parametrized
path $\gamma:[0,L]\to M$ from $x$ to $y$ with
$L<d(x,y)+\ep$. Then
\[
 |f(y)-f(x)|
 =\left|\int_0^L(f\circ\gamma)'(t)\,dt\right|
 \leq KL
 \leq K\bigl(d(x,y)+\ep\bigr).
\]
Letting $\ep\downarrow0$ and taking the supremum over
distinct $x,y$ gives $\lipnorm{f}\leq K$.
\end{proof}

\begin{lemma}\label{lem:arc}
Distinct points of a length space can be joined by an injective
rectifiable path $\gamma:[0,L]\to M$ parametrized by arclength.
For almost every $t\in(0,L)$, such a path satisfies
\begin{equation}\label{eq:metric-speed}
 d(\gamma(t+s),\gamma(t+s'))
 =|s-s'|+o(|s|+|s'|) \qquad\text{when }s,s'\to 0.
\end{equation}
\end{lemma}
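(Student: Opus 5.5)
The proof has two independent parts: first produce an injective arclength path, then apply Fact~\ref{fact:metric-area}(i) to it and show that the metric differential equals $1$ almost everywhere.

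\emph{Step 1: an injective path.} Let $x\neq y$ in the length space $M$. By definition there is a rectifiable path $\sigma$ from $x$ to $y$. Its image $K=\sigma([a,b])$ is a compact, connected and locally connected metric space, i.e.\ a Peano continuum. Peano continua are arcwise connected, so $K$ contains an arc $A$ joining $x$ to $y$. Let $\beta:[0,1]\to A$ be a homeomorphism from $x$ to $y$.

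We must check that $\beta$ is rectifiable. For any partition $t_0<\dots<t_n$ of $[0,1]$, the points $\beta(t_i)$ lie on $K$, and since $\beta$ is injective they appear in order along $A$. One could bound $\sum d(\beta(t_{i-1}),\beta(t_i))$ by $L(\sigma)$ by matching the ordered points with parameters of $\sigma$, but this ordering is delicate because $\sigma$ may revisit points.

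A cleaner route is to use $\HH^1$. By the area formula (Fact~\ref{fact:metric-area}(ii)), $\HH^1(K)\leq L(\sigma)<\infty$. For an injective path, its length equals the $\HH^1$-measure of its image, which is the standard fact for arcs, e.g.\ \cite[Section~2.6]{BBI}. Hence $L(\beta)=\HH^1(A)\leq\HH^1(K)<\infty$.

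Finally, we take the arclength parametrization $\gamma:[0,L]\to M$ of $\beta$. As recalled before Fact~\ref{fact:length-basic}, it is injective, $1$-Lipschitz, and has the same endpoints. Also $L>0$, since $x\neq y$.

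\emph{Step 2: the expansion \eqref{eq:metric-speed}.} Fact~\ref{fact:metric-area}(i) gives, for almost every $t$,
\[
d(\gamma(t+s),\gamma(t+s'))=\operatorname{MD}(\gamma,t)|s-s'|+o(|s|+|s'|).
\]
So it suffices to show $\operatorname{MD}(\gamma,t)=1$ for almost every $t$.

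Since $\gamma$ is $1$-Lipschitz, $\operatorname{MD}(\gamma,t)\leq1$ wherever it exists. For the reverse inequality we use injectivity together with the area formula on subintervals. For any $[u,v]\subset[0,L]$, the path $\gamma|_{[u,v]}$ is injective, so
\[
\int_u^v\operatorname{MD}(\gamma,t)\,dt=\HH^1(\gamma([u,v])).
\]
The right-hand side equals $L(\gamma|_{[u,v]})=v-u$, by the same arc fact ($\HH^1$ of an arc equals its length). Since $\operatorname{MD}\leq1$ and its integral over every subinterval $[u,v]$ equals $v-u$, we get $\operatorname{MD}(\gamma,t)=1$ almost everywhere.

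\emph{Main obstacle.} The hardest step is the identity $\HH^1(\text{arc})=\text{length}$ used in both parts. It can be taken from the literature. Alternatively, the inequality $\HH^1(\gamma([u,v]))\geq v-u$ can be proved directly: project by $z\mapsto d(\gamma(u),z)$, which gives the bound $d(\gamma(u),\gamma(v))$. Applying this on each piece of a fine partition and summing over disjoint sub-arcs gives $\geq v-u$, because $\HH^1$ is additive on the disjoint images.
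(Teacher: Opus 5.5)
Your proof is correct, but Step~1 takes a different route from the paper. The paper uses no continuum theory. It takes the infimum $L$ of lengths of paths from $p$ to $q$ inside the compact image $C$ of a rectifiable path. It then extracts a uniformly convergent subsequence of rescaled near-minimizers by Arzel\`a--Ascoli, and uses lower semicontinuity of length to get a path in $C$ of length exactly $L$. Its arclength parametrization must be injective: a repeated point would let one cut out a loop and obtain a shorter path in $C$.

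You instead get an arc from the classical theorem that Peano continua are arcwise connected. You then have to prove rectifiability separately, which you do via $L(\beta)\leq\HH^1(A)\leq\HH^1(K)\leq L(\sigma)$. For the last inequality, apply the area formula to the arclength reparametrization of $\sigma$, since the area formula is stated for Lipschitz maps.

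The paper's argument is more elementary and self-contained: rectifiability and injectivity both follow from minimality, and the arc obtained is even shortest among paths in $C$. Yours imports a nontrivial topological theorem. Its advantage is that the only metric ingredient, the inequality length $\leq\HH^1$ for arcs, is the same one needed in Step~2. Your sketch of that inequality (project by $z\mapsto d(\gamma(u),z)$, then sum over sub-arcs meeting only at endpoints) is valid even for non-rectifiable arcs, which is exactly what Step~1 requires. The paper simply cites the identity.

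Your Step~2 coincides with the paper's. Integrating over the single interval $[0,L]$ already suffices, so the subinterval version is unnecessary.
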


\begin{proof}
Let $M$ be a length space and $p\neq q\in M$.
Start with a rectifiable path joining $p$ and $q$, and let $C$ be its compact image. Let $L>0$ be the infimum of the lengths of rectifiable paths in $C$ joining $p$ to $q$. Using the arclength parametrizations recalled above, choose such paths $\gamma_n:[0,L_n]\to C$, parametrized by arclength, with $L_n\to L$. The maps $\alpha_n:[0,1]\to C$ defined by $\alpha_n(t)=\gamma_n(L_nt)$ are $L_n$-Lipschitz. Since $(L_n)$ is bounded and $C$ is compact, the Arzel\`a-Ascoli theorem gives a subsequence converging uniformly to a path $\alpha:[0,1]\to C$ with endpoints $p,q$. For each fixed partition, the corresponding sums of distances converge, so the definition of length gives
\[
 L(\alpha)\leq\liminf_{n\to\infty}L(\alpha_n)=L.
\]
The definition of $L$ gives the reverse inequality; hence $L(\alpha)=L$. Let $\gamma:[0,L]\to C$ be the arclength parametrization of $\alpha$. If $\gamma(s)=\gamma(t)$ for some $s<t$, concatenating $\gamma|_{[0,s]}$ and $\gamma|_{[t,L]}$ would give a path in $C$ joining $p$ to $q$ of length $s+(L-t)=L-(t-s)<L$, a contradiction. Thus $\gamma$ is injective.

For the second assertion, let $\gamma:[0,L]\to M$ be any injective rectifiable path parametrized by arclength. By Fact~\ref{fact:metric-area}(i), its metric differential $v(t)=\operatorname{MD}(\gamma,t)$ exists almost everywhere. Since $\gamma$ is $1$-Lipschitz, we have $0\leq v\leq1$ almost everywhere. By injectivity and Fact~\ref{fact:metric-area}(ii),
\[
 \int_0^L v(t)\,dt
 = \int_{\gamma([0,L])} 1\,d\HH^1(x)
 =\HH^1(\gamma([0,L]))
 =L,
\]
where the last equality uses the standard identity between the length of an injective rectifiable path and the $\HH^1$-measure of its image. Consequently, $v=1$ almost everywhere, and Fact~\ref{fact:metric-area}(i) gives
\eqref{eq:metric-speed}.
\end{proof}

\section{Universally weak$^*$ closed subspaces}
\label{sec:subspaces}

The following variant of Weaver's ball argument used in
\cite[Theorem~3.3]{Weaver18} uses open balls and applies to
arbitrary metric spaces.

\begin{proposition}\label{prop:balls}
If $M$ is a metric space and $B$ is a nonempty open ball in $M$,
then $\const{B}$ is universally weak$^*$ closed in $\Lip_0(M)$.
\end{proposition}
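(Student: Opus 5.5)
The plan is to combine the Krein--Smulian theorem with the fact that closed balls of $\Lip_0(M)$ are universally weak$^*$ closed. By Krein--Smulian, for a fixed concrete predual $Y$ it is enough to show that $\const{B}\cap B_{\Lip_0(M)}$ is $\sigma(\Lip_0(M),Y)$-closed. For any concrete predual $Y$ the closed unit ball is the polar of $B_Y$, so every translate $g+rB_{\Lip_0(M)}$ is universally weak$^*$ closed, and so is any intersection of such sets. The whole task is therefore to write $\const{B}\cap B_{\Lip_0(M)}$, or a suitable norm-dense-in-scale family of pieces of it, purely in terms of norm conditions
\[
 \norm{f-g}_L\leq r_g \qquad (g\in\mathcal G),
\]
for a family $\mathcal G$ of explicit Lipschitz functions and radii that do not depend on the predual. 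This mirrors Weaver's argument for geodesic spaces.

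For the family $\mathcal G$, write $B=\{z:d(z,x_0)<r\}$ and use tent functions adapted to $B$. The first candidates are $h(z)=(r-d(z,x_0))^+$ and its shifts $h_{c}=c+h$ for constants $c$ (normalized at $0$). The plan is to show that, for $\norm f_L\leq1$, $f$ is constant on $B$ exactly when $\norm{f\pm\ep h}_L\leq 1+\ep$ holds together with an extra condition. That extra condition would record that along pairs inside $B$ the slope of $f$ can be pushed to $\pm\ep$ without leaving the enlarged ball. Put differently: if $f(u)\neq f(v)$ for some $u,v\in B$, I want a perturbation $g$ supported near $B$ with $\norm{f+g}_L\leq R$ for all constant-on-$B$ competitors but $\norm{f+g}_L>R$ for this $f$. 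This is the difference quotient $f(u)-f(v)$ adding up with $g(u)-g(v)$. The openness of $B$ lets me pick $u,v$ together with a small ball $B(u,2s)\subset B$ and a local bump $(s-d(\cdot,u))^+$. So I will build $\mathcal G$ from all such local bumps at all centres $u\in B$ and radii with $B(u,2s)\subset B$.

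Once this characterization holds, the argument closes as follows. Each condition $\norm{f-g}_L\leq r_g$ defines a universally weak$^*$ closed set. The intersection over $\mathcal G$ with $B_{\Lip_0(M)}$ equals $\const{B}\cap B_{\Lip_0(M)}$. Krein--Smulian then gives $\sigma(\Lip_0(M),Y)$-closedness of $\const{B}$ for every concrete predual $Y$.

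The main obstacle is the inequality showing that constant-on-$B$ functions satisfy the norm bounds. Take pairs $a\in B(u,s)$ and $b\notin B$. Without a geodesic or length structure there is no point $c\in B$ on a path from $a$ to $b$ near the boundary. So I cannot simply write $f(a)-f(b)=f(c)-f(b)$ with $d(c,b)\leq d(a,b)-s$, which is the step Weaver's geodesic argument uses. My first attempt at this step is to avoid needing such $c$. I would shrink the bump height to $\ep s$ and compare with $1+\ep$ rather than $1$, using $d(a,b)\geq d(u,b)-s\geq s$ to absorb the bump. I would then let $\ep\to0$ through a countable family, so that the bound only requires the trivial estimate $|g(a)-g(b)|\leq\ep\, d(a,b)$. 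The remaining work is to check that the reverse direction, detecting a nonconstant $f$, still survives after this rescaling. For that I plan to use that $f$ nonconstant on $B$ gives a pair $u,v\in B$ at positive distance from $M\setminus B$ with slope bounded away from $0$.
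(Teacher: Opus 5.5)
Your framework (Krein--\v{S}mulian, plus writing $B_{\const{B}}$ as an intersection of translates of multiples of $B_{\Lip_0(M)}$) is the same as the paper's. The problem is the test family you propose: it does not work, and it fails exactly at the step you defer (``the reverse direction still survives after this rescaling''). If $h$ is $1$-Lipschitz, then $\lipnorm{f\pm\ep h}\leq\lipnorm{f}+\ep\leq1+\ep$ for \emph{every} $f\in B_{\Lip_0(M)}$. So these conditions cut out the whole unit ball, and the same is true for bumps rescaled to Lipschitz constant $\ep$. More generally, a bound for constant-on-$B$ functions that uses only $\lipnorm{f}\leq1$ and the trivial estimate on the perturbation, and never uses that $f$ is constant inside $B$, holds on all of $B_{\Lip_0(M)}$ and detects nothing.

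Local bumps $(s-d(\cdot,u))^+$ with $B(u,2s)\subset B$ have a second defect even when correctly scaled. They only test pairs $(u,x)$ with $d(u,x)<s$, so at best they give local information on $f$: local constancy, or vanishing pointwise slope. In a general metric space neither implies constancy on $B$. An open ball can be disconnected. Also, on $M=[0,1]$ with metric $|x-y|^{1/2}$, the $1$-Lipschitz function $x\mapsto x$ has $|f(x)-f(u)|/d(x,u)\to0$ at every $u$, yet is not constant on any ball.

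The missing idea is to keep the test function at full slope and scale $f$ instead, centring everything at the centre $p$ of $B=\{x:d(x,p)<R\}$. For $0<r<R$ put $h_r(x)=\min\{r,d(x,p)\}-\min\{r,d(0,p)\}$ and $t_r=1-r/R$.

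\emph{Detection.} Since $h_r(x)-h_r(p)=d(x,p)$ when $d(x,p)<r$, the condition $\lipnorm{h_r\pm t_rf}\leq1$ forces $f(x)=f(p)$. Letting $r\uparrow R$ compares every point of $B$ directly with $p$, so no connectedness is needed.

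\emph{Constant functions pass the test.} Your worry about the missing intermediate point disappears here. Take $x\in B$ with $a=d(p,x)$ and $y\notin B$. If $a\geq r$ then $h_r(x)=h_r(y)$. If $a<r$, the triangle inequality alone gives $d(x,y)\geq R-a$, hence $|h_r(y)-h_r(x)|=r-a\leq\frac rR(R-a)\leq\frac rR\,d(x,y)$. The leftover budget $t_r\,d(x,y)$ then absorbs $t_r|f(y)-f(x)|$. For pairs inside $B$, constancy of $f$ leaves only $h_r$. For pairs outside $B$, the function $h_r$ is constant.

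This gives $B_{\const{B}}=B_{\Lip_0(M)}\cap\bigcap_{0<r<R}\{f:\lipnorm{h_r\pm t_rf}\leq1\}$, and Krein--\v{S}mulian finishes. Your first candidate $(r-d(\cdot,x_0))^+$ is, up to sign and a constant, this $h_r$ with the truncation radius equal to the radius of $B$. There the crossing-slope bound $r/R$ equals $1$ and leaves no room for $f$; truncating strictly inside the ball is what creates that room.
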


\begin{proof}
Let $B$ be the open ball with center $p\in M$ and radius $R>0$. For $0<r<R$, set
\[
 h_r(x)=\min\{r,d(x,p)\}-\min\{r,d(0,p)\},\qquad
 t_r=1-\frac rR>0.
\]
We claim that
\begin{equation}\label{eq:ball-intersection}
 B_{\const{B}}=B_{\Lip_0(M)}\cap\bigcap_{0<r<R}
 \{f\in\Lip_0(M):\lipnorm{h_r+t_rf}\leq1,
                       \ \lipnorm{h_r-t_rf}\leq1\}.
\end{equation}
If $f$ belongs to the right side and $x\in B$, choose
$d(p,x)<r<R$. Then
\[
 d(p,x)\pm t_r(f(x)-f(p))
 =(h_r\pm t_rf)(x)-(h_r\pm t_rf)(p)
 \leq d(p,x).
\]
Thus $f(x)=f(p)$, so $f\in B_{\const{B}}$.

Conversely, let $f\in B_{\const{B}}$. Fix $0<r<R$ and $x,y\in M$, and let us estimate
\begin{equation}\label{eq:prop-balls-estimate}
|(h_r\pm t_rf)(y)-(h_r\pm t_rf)(x)| .
\end{equation}
If $x,y\in B$, $f(x)=f(y)$ and \eqref{eq:prop-balls-estimate} is $|h_r(y)-h_r(x)|\leq d(x,y)$; if $x,y\in M\setminus B$, $h_r(x)=h_r(y)$ and \eqref{eq:prop-balls-estimate} is $t_r|f(y)-f(x)|\leq d(x,y)$. For $x\in B$, $y\notin B$,
put $a=d(p,x)$. If $a\geq r$, then $h_r(x)=h_r(y)$.
If $a<r$, then
\[
 |h_r(y)-h_r(x)|=r-a
 \leq\frac rR(R-a)\leq\frac rR d(x,y),
\]
and hence
\[
 |(h_r\pm t_rf)(y)-(h_r\pm t_rf)(x)|
 \leq\left(\frac rR+t_r\right)d(x,y)=d(x,y).
\]
This proves \eqref{eq:ball-intersection}.

Note that $\lipnorm{h_r\pm t_rf}\leq 1$ precisely when $f\in t_r^{-1}(B_{\Lip_0(M)}\mp h_r)$. Thus the right-hand side of \eqref{eq:ball-intersection} is an intersection of translates of multiples of $B_{\Lip_0(M)}$, hence universally weak$^*$ compact,
and the Krein--\v{S}mulyan theorem proves the assertion.
\end{proof}

\begin{remark}\label{rem:bounded-reduction}
Since Proposition~\ref{prop:balls} requires no extra condition on $M$, Weaver's bounded reduction argument from \cite[Theorem~3.1]{Weaver_arxiv} applies in general. That is, if Theorem~\ref{thm:main} holds for bounded metric spaces, then it holds for all metric spaces. Indeed, let $M$ be any pointed metric space and let $Y$ be a concrete
predual of $\Lip_0(M)$. For $n=1,2,\ldots$, let $B_n$ be the open ball with center $0$ and radius $n$.
Proposition~\ref{prop:balls} and Lemma~\ref{lem:restriction} give
a concrete predual $J_n$ of $\Lip_0(B_n)$ such that
$R_{B_n}^*(J_n)\subset Y$. Assuming the bounded case of Theorem~\ref{thm:main},
we obtain $J_n=\lipfree{B_n}$. Since
$R_{B_n}^*\delta_{B_n}(x)=\delta_M(x)$ for $x\in B_n$, and
$M=\bigcup_n B_n$, every $\delta_M(x)$, $x\in M$ belongs to $Y$.
Fact~\ref{fact:concrete-preduals} completes the proof.
\end{remark}

Proposition~\ref{prop:balls} can be easily extended to more general domains.

\begin{proposition}\label{prop:domains}
If $M$ is a metric space and $U\subset M$ is nonempty, open and
connected, then $\const{U}$ is universally weak$^*$ closed in $\Lip_0(M)$.
\end{proposition}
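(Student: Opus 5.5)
The plan is to write $\const{U}$ as an intersection of the subspaces $\const{B}$ over open balls $B\subset U$, and to exploit connectedness so that this intersection sees the whole of $U$. A function that is constant on each open ball contained in $U$ is locally constant on $U$, because $U$ is open and every point of $U$ has an open ball around it inside $U$. A locally constant function on a connected set is constant.

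Concretely, let $\mathcal B$ be the family of all nonempty open balls $B(x,r)^\circ$ with $x\in U$ that are contained in $U$. First, each $\const{B}$ with $B\in\mathcal B$ is universally weak$^*$ closed, by Proposition~\ref{prop:balls}. Second, I claim that
\[
 \const{U}=\bigcap_{B\in\mathcal B}\const{B}.
\]
The inclusion $\subset$ is trivial. For $\supset$, let $f$ lie in the intersection. Every $x\in U$ has some open ball $B\in\mathcal B$ around it, and $f$ is constant on that ball. Hence $f|_U$ is locally constant, i.e.\ continuous into $\RR$ with the discrete topology. Since $U$ is connected, $f|_U$ is constant.

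An arbitrary intersection of $\sigma(\Lip_0(M),Y)$-closed sets is closed for each concrete predual $Y$, so $\const{U}$ is universally weak$^*$ closed.

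There is no real obstacle here. The only point needing care is that connectedness is used in the topological sense, which is exactly what the locally constant argument requires. Neither path-connectedness nor any length structure is needed, and no Krein--\v{S}mulyan argument is required beyond the one already carried out in Proposition~\ref{prop:balls}.
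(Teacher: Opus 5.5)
Your proof is correct and is essentially the paper's argument. The paper also writes $\const{U}=\bigcap\{\const{B}: B\text{ an open ball contained in }U\}$, applies Proposition~\ref{prop:balls} to each ball, and concludes by connectedness, since a function constant on every such ball is locally constant on the open set $U$. One cosmetic point: the paper's $B(x,r)$ denotes a closed ball, and the interior of a closed ball need not be an open ball, so write the open ball explicitly as $\{y\in M: d(x,y)<r\}$ rather than $B(x,r)^\circ$.
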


\begin{proof}
This follows from Proposition~\ref{prop:balls} by observing that
$$
\const{U} = \bigcap\set{\const{B} \,:\, \text{$B$ is an open ball contained in $U$} } .
$$
Indeed, inclusion $\subset$ is obvious. For the converse, note that any $f\in\bigcap_B\const{B}$ is locally constant at every point of $U$ because $U$ is open. Thus level sets $f^{-1}(c)\cap U$ are simultaneously open and closed in $U$ and, since $U$ is connected, $f$ must be constant in $U$.
\end{proof}

\begin{lemma}
\label{lem:boundary}
Let $M$ be a length space, let $U\subset M$ be nonempty, open and
connected, and let $B$ be a nonempty connected component of $M\setminus\cl{U}$.
Put $W=\partial B$, $A=M\setminus\cl{B}$.
Then $A$ and $W$ are nonempty, and $\const{A}$, $\const{B}$ and $\const{W}$ are
universally weak$^*$ closed.
\end{lemma}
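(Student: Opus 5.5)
The plan is to get all three closedness statements from Proposition~\ref{prop:domains} together with the Krein--\v{S}mulyan theorem. The hard points are that $A$ need not be connected, and that $\const{W}$ is not of the form $\const{U'}$ for an open connected $U'$.

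\emph{Topology.} We have that $M\setminus\cl{U}$ is open and $B$ is one of its components. By Fact~\ref{fact:length-basic}(i), $B$ is open and connected. Since $U$ is open and disjoint from $B$, it misses $\cl{B}$, so $U\subset A$ and $A\neq\varnothing$. In particular $B\neq M$, so Fact~\ref{fact:length-basic}(i) gives $W=\partial B\neq\varnothing$ and $W\subset\partial(M\setminus\cl{U})\subset\cl{U}$.

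Let $C_0$ be the component of the open set $A$ containing the connected set $U$. Then $W\subset\cl{U}\subset\cl{C_0}$. For any component $C$ of $A$, Fact~\ref{fact:length-basic}(i) gives $C$ open and connected, with $\varnothing\neq\partial C\subset\partial A$ (note $C\neq M$ since $B\neq\varnothing$). Moreover $\partial A=\partial(M\setminus\cl{B})\subset\partial\cl{B}\subset W$. Finally, $\cl{A}\cup\cl{B}=M$ and $\cl{A}\cap\cl{B}\subset\partial B=W$.

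\emph{$\const{B}$ and $\const{A}$.} Since $B$ is nonempty, open and connected, Proposition~\ref{prop:domains} applies directly to $\const{B}$. For $A$, I would show
\[
\const{A}=\bigcap_{C}\const{C},
\]
with $C$ running over the components of $A$. The inclusion $\subset$ is trivial. Conversely, suppose $f$ equals the constant $c_C$ on each component $C$. Pick $w\in\partial C\subset W\subset\cl{C_0}$. Continuity of $f$ gives $c_C=f(w)=c_{C_0}$, so $f$ is constant on $A$. Each $\const{C}$ is universally weak$^*$ closed by Proposition~\ref{prop:domains}, hence so is the intersection.

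\emph{$\const{W}$.} This is the step I expect to be the real obstacle, since $W$ is not open. The plan is to prove $\const{W}=\const{A}+\const{B}$ with norm control, and then apply Krein--\v{S}mulyan.

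The inclusion $\supset$ holds by continuity, because $W\subset\cl{A}\cap\cl{B}$. For the other inclusion, take $f\in\const{W}$ with $f|_W\equiv c$. Define $\tilde g=f$ on $\cl{A}$ and $\tilde g=c$ on $\cl{B}$. This is consistent because $\cl{A}\cap\cl{B}\subset W$. By Fact~\ref{fact:length-basic}(iii), applied to the closed cover $\{\cl{A},\cl{B}\}$, $\tilde g$ is Lipschitz with $\lipnorm{\tilde g}\leq\lipnorm{f}$. Put $g=\tilde g-\tilde g(0)\in\const{B}$. Then $f-g$ is constant on $A$, so $f-g\in\const{A}$, and $\lipnorm{f-g}\leq2\lipnorm{f}$. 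This gives
\[
B_{\const{W}}=\bigl(2B_{\const{A}}+B_{\const{B}}\bigr)\cap B_{\Lip_0(M)}.
\]
For any concrete predual, the right-hand side is a sum of two weak$^*$ compact sets, by the previous step and by weak$^*$ compactness of the ball, intersected with a weak$^*$ compact set. Hence it is weak$^*$ compact, and the Krein--\v{S}mulyan theorem finishes the proof.
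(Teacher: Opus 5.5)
Your proof is correct and follows essentially the paper's route: $\const{A}$ is written as the intersection of the $\const{C}$ over the components $C$ of $A$ and handled by Proposition~\ref{prop:domains}, and $\const{W}$ is handled by decomposing $B_{\const{W}}$ into pieces from $\const{A}$ and $\const{B}$ and applying Krein--\v{S}mulyan, with your $\tilde g$ being exactly the paper's $h$. The differences are cosmetic. You identify the constants on the components of $A$ via $\partial C\subset\partial A\subset W\subset\cl{C_0}$ rather than through the other components of $M\setminus\cl{U}$, and you glue with Fact~\ref{fact:length-basic}(iii) and a base-point shift instead of a case split; the harmless factor $2$ this introduces is not even needed, since the same gluing shows $f-g$ is $\lipnorm{f}$-Lipschitz, which recovers the paper's exact identity $B_{\const{W}}=B_{\const{A}}+B_{\const{B}}$.
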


\begin{proof}
By Fact~\ref{fact:length-basic}(i), every component of
$M\setminus\cl{U}$ is open and has nonempty boundary
contained in $\cl{U}$.
Moreover $U\subset A$, in particular $A$ is nonempty.

Let us prove that
\begin{equation}\label{eq:component-kernels}
 \const{A}=\bigcap\{\const{V}:V\text{ is a component of }A\}.
\end{equation}
Suppose $f\in \Lip_0(M)$ is constant on each component of $A$. Since $U$
is connected, $f$ is constant, say equal to $c$, on $U$ and hence on
$\cl{U}$. Any component $B'$ of $M\setminus\cl{U}$ different from $B$ is disjoint from
$\cl{B}$, as it is open and disjoint from $B$. Thus $B'\subset A$,
so $f$ is constant on $B'$. Since $\partial B'\subset\cl{U}$ is non-empty, that constant must be $c$. Finally, $A$ is contained in
the union of $\cl{U}$ and these other components, so we conclude that $f\in\const{A}$ as claimed.
Each component $V$ of $A$ is open by
Fact~\ref{fact:length-basic}(i), and connected.
Proposition~\ref{prop:domains} proves universal closedness
of $\const{A}$ and also of $\const{B}$.

There is a disjoint partition $M=A\cup W\cup B$. Furthermore,
$W\subset\cl{A}\cap\cl{B}$, because $W\subset\cl{U}$
and $U\subset A$.
Consequently, $\cl{A}$ and $\cl{B}$ form a closed cover
of $M$ with $\cl{A}\cap\cl{B}=W$.

We shall show that
\begin{equation}\label{eq:sum of balls}
 B_{\const{W}}=B_{\const{A}}+B_{\const{B}}.
\end{equation}
This will be enough as $B_{\const{A}}$ and $B_{\const{B}}$ are universally weak$^*$ compact, so the same holds for their sum, thus $\const{W}$ is universally weak$^*$ closed by the Krein-\v{S}mulyan theorem.

Assume that the base point $0$ belongs to $\cl{A}$; the proof is similar if $0\in\cl{B}$ instead. Given $f\in B_{\const{W}}$, let $c$ be the constant value of $f$ at $W$ and define functions $g$ and $h$ on $M$ by
$$
g(x) = \begin{cases}
0 &\text{, if $x\in A$} \\
f(x)-c &\text{, if $x\notin A$}
\end{cases}
\qquad\text{and}\qquad
h(x) = \begin{cases}
c &\text{, if $x\in B$} \\
f(x) &\text{, if $x\notin B$}
\end{cases}
.
$$
It is clear that $g$ is constant on $\cl{A}\supset A\cup W$, $h$ is constant on $\cl{B}\supset B\cup W$, $g(0)=h(0)=0$, and $f=g+h$. We may also check that $\lipnorm{g}\leq 1$. Indeed, we have $g(x)-g(y)=0$ if $x,y\in\cl{A}$ and $\abs{g(x)-g(y)}=\abs{f(x)-f(y)}\leq d(x,y)$ if $x,y\in\cl{B}$. Now assume $x\in A$ and $y\in B$. By Fact~\ref{fact:length-basic}(ii), for every $\ep>0$ there exists $w\in W$ such that $d(x,w)+d(w,y)\leq d(x,y)+\ep$, and
$$
\abs{g(x)-g(y)} = \abs{f(y)-c} = \abs{f(y)-f(w)} \leq d(w,y) \leq d(x,y)+\ep .
$$
Letting $\ep\to 0$ shows $\lipnorm{g}\leq 1$. Similar reasoning yields $\lipnorm{h}\leq 1$. This establishes inclusion $\subset$ in \eqref{eq:sum of balls}.

Conversely, let $g\in B_{\const{A}}$ and $h\in B_{\const{B}}$, and set $f=g+h$. Clearly $f$ is constant on $W\subset\cl{A}\cap\cl{B}$. Again, we have $\abs{f(x)-f(y)}=\abs{h(x)-h(y)}\leq d(x,y)$ if $x,y\in\cl{A}$ and $\abs{f(x)-f(y)}=\abs{g(x)-g(y)}\leq d(x,y)$ if $x,y\in\cl{B}$. For $x\in A$ and $y\in B$ and any $\ep>0$, use Fact~\ref{fact:length-basic}(ii) to obtain $w\in W$ such that $d(x,w)+d(w,y)\leq d(x,y)+\ep$, then
\begin{align*}
\abs{f(x)-f(y)} &\leq \abs{f(x)-f(w)} + \abs{f(w)-f(y)} \\
&= \abs{h(x)-h(w)} + \abs{g(w)-g(y)} \\
&\leq d(x,w) + d(w,y) \leq d(x,y)+\ep .
\end{align*}
We conclude that $\lipnorm{f}\leq 1$ and $f\in B_{\const{W}}$. This completes the proof of \eqref{eq:sum of balls}.
\end{proof}

\section{Proof for length spaces}
\label{sec:length}

We will now prove Theorem~\ref{thm:main} under the assumption that $M$ is a complete length space. Our proof will be based on analyzing the behavior of Lipschitz functions along rectifiable paths in $M$, in particular in the ``cut-sets'' separating the endpoints from each other. For the next three lemmas, we fix a complete length space $M$ and two distinct points $p,q\in M$. Write $D=d(p,q)$ and define
\begin{equation}\label{eq:cutset}
 S=\{p,q\}\cup
 \{z\in M\setminus\{p,q\}:p,q\text{ belong to different
 components of }M\setminus\{z\}\}.
\end{equation}

\begin{lemma}\label{lem:cutset}
The set $S$ is compact, and $z\mapsto d(p,z)$ is an isometry from
$S$ onto a compact set $E\subset[0,D]$ containing $0$ and $D$. 
Let $\iota:E\to S$ is its inverse.
There exists a $1$-Lipschitz function $r:M\to[0,D]$
such that $r\circ\iota=\operatorname{id}_E$ and, for
every connected $V\subset M\setminus S$, the interior of the
interval $r(V)$ is disjoint from $E$.
\end{lemma}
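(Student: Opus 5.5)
The plan has three parts: first show $z\mapsto d(p,z)$ is an isometry on $S$, then show $S$ is closed (hence compact), and finally build $r$ by an explicit formula and check its separation property.

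\emph{Step 1: characterise $S$.} I would first show that, for $z\in M\setminus\{p,q\}$, $z\in S$ if and only if $z$ meets every path from $p$ to $q$. If $p,q$ lie in the same component of the open set $M\setminus\{z\}$, then Fact~\ref{fact:length-basic}(i) says that component is rectifiably path connected, so some path from $p$ to $q$ avoids $z$. Conversely, a path avoiding $z$ has connected image in $M\setminus\{z\}$, so $p$ and $q$ lie in the same component. Applying Fact~\ref{fact:length-basic}(ii) with $F=\{z\}$ then gives
\[
 d(p,z)+d(z,q)=D\qquad\text{for all } z\in S.
\]

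\emph{Step 2: the isometry and compactness.} Take $z,w\in S$ and $\ep>0$, and pick a path from $p$ to $q$ of length less than $D+\ep$. It passes through both $z$ and $w$. If it meets $z$ before $w$, then
\[
 d(p,z)+d(z,w)+d(w,q)<D+\ep ,
\]
and Step~1 turns this into $d(z,w)\leq d(p,w)-d(p,z)+\ep$. If it meets $w$ first, the roles of $z$ and $w$ swap. One of the two cases occurs for a sequence $\ep\to0$. Combined with the reverse triangle inequality $|d(p,z)-d(p,w)|\leq d(z,w)$, this gives $d(z,w)=|d(p,z)-d(p,w)|$. So $z\mapsto d(p,z)$ is an isometry of $S$ into $[0,D]$, with $p\mapsto0$ and $q\mapsto D$.

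For closedness, let $z_n\in S$ converge to $z$. Every path from $p$ to $q$ contains all the $z_n$, and its image is compact, hence it contains $z$. By Step~1, $z\in S$. Now $S$ is closed in the complete space $M$ and isometric to a bounded subset of $\RR$, so it is totally bounded and therefore compact. Its image $E$ is then compact and contains $0$ and $D$.

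\emph{Step 3: the retraction $r$ (main obstacle).} I would try McShane-type envelopes relative to $S$:
\[
 u(x)=\max_{e\in E}\bigl(e-d(x,\iota(e))\bigr),\qquad
 v(x)=\min_{e\in E}\bigl(e+d(x,\iota(e))\bigr).
\]
Both are $1$-Lipschitz, satisfy $u\leq v$, and both restrict to the identity on $S$ via $\iota$. Set $r=\min\{D,\max\{0,\cdot\}\}$ applied to a suitable $1$-Lipschitz combination of $u$ and $v$; this combination is the part I cannot yet pin down.

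The key geometric input is as follows. For $e\in E\cap(0,D)$ and $z=\iota(e)$, points $\iota(e')$ with $e'<e$ lie in the component of $M\setminus\{z\}$ containing $p$ (a near-geodesic from $p$ reaches $\iota(e')$ before $z$), and points with $e'>e$ lie in the component containing $q$. Hence, by Fact~\ref{fact:length-basic}(ii), for $x$ in any component of $M\setminus\{z\}$ other than $p$'s,
\[
 d(x,\iota(e'))\geq d(x,z)+e-e'\quad (e'<e),
\]
which gives $v(x)\geq e$. Symmetrically, $u(x)\leq e$ on every component other than $q$'s.

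Now let $V\subset M\setminus S$ be connected and suppose some $e\in E$ lies in the interior of $r(V)$. Then $e\in(0,D)$, and $V$ lies in a single component of $M\setminus\{\iota(e)\}$. The goal is to show that $r$ takes values only on one side of $e$ on each such component, which contradicts $e$ being interior to $r(V)$. The one-sided bounds for $u$ and $v$ do not immediately give this for either function alone, because $u$ is only bounded above off $q$'s side and $v$ only bounded below off $p$'s side. Choosing the combination (for instance, some interpolation between $u$ and $v$ driven by the component structure) so that the two-sided separation property holds is where I expect the real work to lie.
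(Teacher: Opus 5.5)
Your Steps~1 and~2 are fine. Ordering $z,w$ along a near-geodesic is a valid substitute for the paper's ``every path from $p$ to $w$, or every path from $w$ to $q$, meets $z$'' argument. Sequential closedness via ``every path from $p$ to $q$ contains $S$'' is also valid.

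The genuine gap is Step~3. You never define $r$, so the separation property for connected $V\subset M\setminus S$ is not proved, and that property is the substantive part of the lemma. The missing observation is that your envelopes are far simpler than they look. Since $d(p,\iota(e))=e$ and $d(\iota(e),q)=D-e$, the triangle inequality gives $e-d(x,\iota(e))\leq D-d(x,q)$ and $e+d(x,\iota(e))\geq d(x,p)$. These become equalities at $e=D$ and $e=0$ respectively, so $u(x)=D-d(x,q)$ and $v(x)=d(x,p)$. The right combination is simply the average,
\[
 r=\tfrac12(u+v),\qquad r(x)=\frac{d(x,p)-d(x,q)+D}{2},
\]
which is exactly the paper's choice. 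It is automatically $1$-Lipschitz with values in $[0,D]$, so no clamping is needed, and it satisfies $r\circ\iota=\operatorname{id}_E$.

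The separation property needs an exact value for one envelope, not separate one-sided bounds on each. Let $z=\iota(s)$ with $0<s<D$, and let $V$ lie in a component $C$ of $M\setminus\{z\}$ with $q\notin C$. Then every path from $x\in V$ to $q$ meets $z$, so Fact~\ref{fact:length-basic}(ii) gives $d(x,q)=d(x,z)+D-s$, that is, $u(x)=s-d(x,z)$ exactly. The triangle inequality gives $v(x)=d(x,p)\leq s+d(x,z)$. The $d(x,z)$ terms cancel in the average, so $r\leq s$ on $V$. Symmetrically, $r\geq s$ on $V$ when $p\notin C$. Because $p$ and $q$ lie in different components of $M\setminus\{z\}$, at least one case always applies, including for components containing neither point.

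Neither envelope works alone. Take a segment from $p$ to $q$ with a hair attached at an interior point $a$. The hair minus $a$ is a connected subset of $M\setminus S$. On it $u=a-t$ and $v=a+t$, where $t$ is the distance to $a$, so both $u(V)$ and $v(V)$ have interiors meeting $E=[0,D]$, while the average is constantly $a$.
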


\begin{proof}
Every point of $S$ lies on every path from $p$ to $q$,
so $S$ is contained in the compact image of one such path.
To prove that $S$ is closed, fix $z\notin S$.
Then $p,q$ belong to the same component of the open set
$M\setminus\{z\}$. By Fact~\ref{fact:length-basic}(i),
there is a path from $p$ to $q$ avoiding $z$.
Its compact image misses an open ball around $z$.
Since that image contains $S$, this ball is disjoint
from $S$. Thus $S$ is closed, hence compact.

Applying Fact~\ref{fact:length-basic}(ii) with the
separating set $F=\{z\}$ gives
\[
 D=d(p,z)+d(z,q)\qquad(z\in S).
\]
For distinct $z,w\in S$, either every path from $p$ to $w$
meets $z$, or every path from $w$ to $q$ meets $z$.
Indeed, otherwise we could concatenate two paths avoiding
$z$ to obtain a path from $p$ to $q$ avoiding $z$.
Another application of Fact~\ref{fact:length-basic}(ii)
therefore gives
\[
 d(p,w)=d(p,z)+d(z,w)
 \quad\text{or}\quad
 d(w,q)=d(w,z)+d(z,q).
\]
Together with the identity for $D$, either equality implies
\[
 d(z,w)=|d(p,z)-d(p,w)|.
\]
Hence $z\mapsto d(p,z)$ is an isometry of $S$ onto a
compact set $E\subset[0,D]$ containing $0$ and $D$.

Define
\begin{equation}\label{eq:cut-coordinate}
 r(x)=\frac{d(x,p)-d(x,q)+D}{2}
\end{equation}
for $x\in M$.
The triangle inequality gives $0\leq r\leq D$ and
$\lipnorm{r}\leq1$. Moreover, the identity for $D$ gives
$r(z)=d(p,z)$ for $z\in S$, so $r(\iota(s))=s$ for every $s\in E$.

Finally, let $V\subset M\setminus S$ be connected.
By continuity, $r(V)$ is an interval.
Suppose that $s\in E$ lies in its interior.
Since $r(V)\subset[0,D]$, we have $0<s<D$.
Put $z=\iota(s)\in S$. As $V$ is connected and avoids $z$,
it lies in a component $C$ of $M\setminus\{z\}$.
The points $p,q$ belong to different components,
so at least one of them lies outside $C$.
If $q\notin C$, every path from $x\in V$ to $q$ meets $z$.
Fact~\ref{fact:length-basic}(ii) and the triangle inequality
therefore give
\[
 d(x,q)=d(x,z)+D-s,\qquad
 d(x,p)\leq d(x,z)+s
 \qquad(x\in V).
\]
Substituting into \eqref{eq:cut-coordinate} yields
$r(x)\leq s$ for every $x\in V$.
If $p\notin C$ instead, a symmetric argument gives
$r(x)\geq s$ for every $x\in V$.
In either case, $s$ cannot lie in the interior of $r(V)$,
a contradiction.
\end{proof}

\begin{lemma}\label{lem:cut-projection2}
There exists an $M$-projection $P$ on $\Lip_0(M)$ such that
\[
 P^*(\lipfree{M})\subset Y
 \qquad\text{for every concrete predual $Y$ of $\Lip_0(M)$}
\]
and, moreover, for every $f\in\Lip_0(M)$ and every nonconstant
rectifiable path $\gamma:[0,L]\to M$ parametrized by
arclength,
\begin{equation}\label{eq:cut-projection-derivative}
 (Pf\circ\gamma)'(t)=(f\circ\gamma)'(t)
 \quad\text{for almost every $t\in [0,L]$ such that $\gamma(t)\in S$.}
\end{equation}
\end{lemma}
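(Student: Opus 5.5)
We construct $P$ explicitly by pulling back to $M$, through the coordinate $r$ of Lemma~\ref{lem:cutset}, the part of the derivative of $f$ that lives on $S$. The range of $P$ will then be identified with $L^\infty(E)$, and the predual statement will follow from Lemma~\ref{lem:Mprojection} together with strong uniqueness of the predual $L^1$ of $L^\infty$.

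\emph{Step 1: definition of $P$.} For $f\in\Lip_0(M)$, the function $g_f=f\circ\iota:E\to\RR$ is Lipschitz with $\lipnorm{g_f}\leq\lipnorm f$, since $\iota$ is an isometry. Hence $g_f'$ exists almost everywhere on $E$, with $\norm{g_f'}_\infty\leq\lipnorm f$. Put
\[
 G_f(u)=\int_0^u\one_E(s)\,g_f'(s)\,ds\quad(u\in[0,D]),\qquad Pf=G_f\circ r-G_f(r(0)).
\]
Then $P$ is linear, $Pf(0)=0$, and $\lipnorm{Pf}\leq\lipnorm f$ because $r$ is $1$-Lipschitz. Let $\tilde g_f$ be the extension of $g_f$ that is affine on the gaps of $E$. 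We have $G_f'=\tilde g_f'$ almost everywhere on $E$ and $G_f'=0$ almost everywhere off $E$.

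\emph{Step 2: derivatives along paths.} Fix an arclength path $\gamma$ and put $v=r\circ\gamma$.
\begin{enumerate}[label=\textup{(\alph*)}]
\item On $\gamma^{-1}(S)$ we have $f\circ\gamma=\tilde g_f\circ v$. By Fact~\ref{fact:real}(i), $(f\circ\gamma)'=(\tilde g_f\circ v)'$ almost everywhere there.
\item Since $(G_f-\tilde g_f)'=0$ almost everywhere on $E$, Fact~\ref{fact:real}(ii) gives $(G_f\circ v)'=(\tilde g_f\circ v)'$ almost everywhere on $v^{-1}(E)\supset\gamma^{-1}(S)$. Together with (a) this proves \eqref{eq:cut-projection-derivative}.
\item Off $\gamma^{-1}(S)$ we claim $(Pf\circ\gamma)'=0$ almost everywhere. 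The set $\gamma^{-1}(M\setminus S)$ is open, so it is a countable union of intervals $I$. Each $\gamma(I)$ is connected and disjoint from $S$, so by Lemma~\ref{lem:cutset} the interior of $v(I)$ misses $E$. On $I\cap v^{-1}([0,D]\setminus E)$, Fact~\ref{fact:real}(ii) gives $(G_f\circ v)'=0$ almost everywhere. On $I\cap v^{-1}(E)$, the function $v$ takes at most the two endpoint values of $v(I)$. Hence $v'=0$ almost everywhere on each of these level sets by Fact~\ref{fact:real}(i), and therefore the derivative of the Lipschitz composite $G_f\circ v$ also vanishes almost everywhere there.
\end{enumerate}

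\emph{Step 3: $P$ is an $M$-projection.} Step 2 shows that, along every arclength path, $(Pf\circ\gamma)'=\one_{\gamma^{-1}(S)}(f\circ\gamma)'$ almost everywhere. The same computation applied to $Pf$ shows $P^2=P$. Moreover, $((I-P)f\circ\gamma)'=\one_{\gamma^{-1}(M\setminus S)}(f\circ\gamma)'$ almost everywhere. Fact~\ref{fact:length-basic}(iv) expresses each Lipschitz norm as a supremum of $L^\infty$-norms of these path derivatives, and the two derivatives have disjoint supports. Hence
\[
 \lipnorm f=\max\{\lipnorm{Pf},\lipnorm{(I-P)f}\}.
\]

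\emph{Step 4: the predual inclusion.} Let $U=P\Lip_0(M)$ and consider $T:U\to L^\infty(E)$, $T(Pf)=g_f'|_E$. This map is onto: given $\varphi\in L^\infty(E)$, set $G(u)=\int_0^u\one_E\varphi$ and $f=G\circ r-G(r(0))$; then $Pf=f$ and $Tf=\varphi$. For the norm, $\lipnorm{Pf}\leq\norm{G_f'}_\infty=\norm{g_f'|_E}_\infty$. For the reverse inequality, take an arclength parametrization of a path from $p$ to $q$; it passes through all of $S$. Parts (a) and (b) of Step 2 show that its derivative recovers $g_f'$ on $E$, which gives equality, so $T$ is an isometry. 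Since $L^\infty(E)$ is a von Neumann algebra, it has the strongly unique predual $L^1(E)$, and this transfers through $T$ to a strongly unique predual $U_*$ of $U$. Lemma~\ref{lem:Mprojection} then gives $P^*Y=\{\lambda\circ P:\lambda\in U_*\}\subset Y$ for every concrete predual $Y$. Finally,
\[
 P^*\delta(x)(f)=\int_E\bigl(\one_{[0,r(x)]}-\one_{[0,r(0)]}\bigr)\,g_f'\,d\lambda,
\]
which is of the form $\lambda\circ P$ with $\lambda\in U_*$. Thus every $P^*\delta(x)$ lies in $Y$, and by linearity and norm closedness $P^*(\lipfree M)\subset Y$.

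The main obstacle is Step 2(c), together with the norm identity in Step 4. These require careful chain-rule bookkeeping at the points where $r\circ\gamma$ touches $E$ from a component of $M\setminus S$. This is where the interior conclusion of Lemma~\ref{lem:cutset} is used in an essential way.
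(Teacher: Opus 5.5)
Your operator is exactly the paper's $P=\mathcal T\mathcal R$ (your $\tilde g_f$ is its $H_f$, and $g_f'=\tilde g_f'$ a.e.\ on $E$). Step~2(b) is the paper's ``$\mathcal RQ=0$'' argument rewritten, and Steps~1--3 and the closing computation of $P^*\delta(x)$ are sound.

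The step that does not hold up as written is the lower bound $\lipnorm{Pf}\geq\norm{g_f'}_{L^\infty(E)}$ in Step~4. You need it for $T$ to be an isometry, and hence for strong uniqueness to transfer from $L^\infty(E)$ to $U$. Parts (a) and (b) only give $(Pf\circ\gamma)'=(\tilde g_f\circ v)'$ a.e.\ on $\gamma^{-1}(S)$. To ``recover $g_f'$ on $E$'' from this you would also need:
\begin{itemize}
\item a chain rule for $\tilde g_f\circ v$;
\item the fact that $\abs{v'}=1$ a.e.\ on $\gamma^{-1}(S)$;
\item that $v$ does not send positive-measure subsets of $\gamma^{-1}(S)$ to null subsets of $E$.
\end{itemize}
None of these is established. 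For an injective path they can be proved using Lemma~\ref{lem:arc} and the area formula, but that is genuine extra work.

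The detour through paths is unnecessary. Since $r\circ\iota=\mathrm{id}_E$, you have $Pf\circ\iota=G_f-G_f(r(0))$ on $E$. Since almost every point of $E$ is non-isolated in $E$, this gives
\[
 \norm{g_f'}_{L^\infty(E)}=\norm{G_f'}_{L^\infty(E)}\leq\lipnorm{G_f|_E}=\lipnorm{Pf\circ\iota}\leq\lipnorm{Pf}.
\]
This is the paper's observation that $\mathcal R$ is non-expansive with $\mathcal R\mathcal T=I$. The same identity also shows that $T$ is well defined on $U$, namely $Tu=(u\circ\iota)'|_E$, which you did not check.

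Two simplifications, both as in the paper:
\begin{itemize}
\item In Step~2(c) the level-set splitting is superfluous. The set $E\cap v(I)$ has at most two points, so $G_f$ is constant on the interval $v(I)$ and $Pf\circ\gamma$ is constant on $I$. Equivalently, functions in the range of $P$ are locally constant on $M\setminus S$.
\item At the end you may apply Lemma~\ref{lem:Mprojection} to $Y=\lipfree{M}$ itself. This gives $P^*\lipfree{M}=\{\lambda\circ P:\lambda\in U_*\}=P^*Y\subset Y$ without computing $P^*\delta(x)$.
\end{itemize}
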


\begin{proof}
Let $E,\iota,r$ be as in Lemma~\ref{lem:cutset}.
For $f\in\Lip_0(M)$, let $H_f$ be the extension of $f\circ\iota:E\to\RR$ to $[0,D]$ by affine interpolation on each gap in $E$.\footnote{Explicitly, on each interval $[a,b]$ with $a<b$ and $[a,b]\cap E=\{a,b\}$ we put
\[
 H_f(t)=
 \frac{b-t}{b-a}f(\iota(a))
 +\frac{t-a}{b-a}f(\iota(b))
 \qquad(a<t<b).
\]}
This extension satisfies $\lipnorm{H_f}\leq\lipnorm{f}$ and the map $f\mapsto H_f$ is linear.

Equip $E$ with restricted Lebesgue measure and define linear operators $\mathcal R:\Lip_0(M)\to L_\infty(E)$ and $\mathcal T:L_\infty(E)\to\Lip_0(M)$ by
\begin{align}
 \mathcal Rf&=H_f'|_E,
 \label{eq:cut-R}\\
 (\mathcal Tg)(x)&=
 \int_0^{r(x)}\one_E(t)g(t)\,dt
 -\int_0^{r(0)}\one_E(t)g(t)\,dt,
 \qquad x\in M.
 \label{eq:cut-T}
\end{align}
Here $g\in L^\infty(E)$ is extended by zero outside $E$, and $0$ in
$r(0)$ denotes the base point of $M$.
Both $\mathcal R$ and $\mathcal T$ are non-expansive.
For $s\in E$, the definitions and the identity
$r\circ \iota=\operatorname{id}_E$ give
\[
 H_{\mathcal Tg}(s)
 =(\mathcal Tg)(\iota(s))
 =\int_0^s\one_E(t)g(t)\,dt
  -\int_0^{r(0)}\one_E(t)g(t)\,dt.
\]
The second integral is independent of $s$ and therefore does not affect derivatives. By Fact~\ref{fact:real}(i) and the fundamental theorem of calculus, $(H_{\mathcal Tg})'=g$ almost everywhere on $E$, so $\mathcal R\mathcal T=I$.
Thus $\mathcal T$ is an isometry, and $P=\mathcal T\mathcal R$ is a projection on $\Lip_0(M)$ (with $P=0$ if $E$ is null).

Let $U=\mathcal P(\Lip_0(M))=\mathcal T(L^\infty(E))$, and let us check that every $u\in U$ is locally constant on $M\setminus S$. Suppose $u=\mathcal Tg$ for $g\in L^\infty(E)$.
Since $S$ is closed, Fact~\ref{fact:length-basic}(i) shows that every point in $M\setminus S$ has a connected open neighborhood $V$ disjoint from $S$.
By Lemma~\ref{lem:cutset}, the interior of the interval $r(V)$ does not intersect $E$.
Thus, for any $x,y\in V$, the integrand $\one_Eg$ vanishes almost everywhere between $r(x)$ and $r(y)$, and
\[
 u(x)-u(y)=
 (\mathcal Tg)(x)-(\mathcal Tg)(y)
 =\int_{r(x)}^{r(y)}\one_E(t)g(t)\,dt=0.
\]
Hence $u$ is constant on $V$, as required.

Put $Q=I-P$. Since $\mathcal R\mathcal T=I$, we have
$\mathcal RQ=0$.
Fix $f\in\Lip_0(M)$.
Let $\gamma:[0,L]\to M$ be any nonconstant
rectifiable path parametrized by arclength and set $A=\gamma^{-1}(S)$. Outside $A$, the function $Pf\circ\gamma$ is locally constant. On $A$, the function $Qf\circ\gamma$ agrees
with $H_{Qf}\circ r\circ\gamma$.
Indeed, for $t\in A$ we have $\gamma(t)\in S$ and hence $\iota(r(\gamma(t)))=\gamma(t)$. Since $H_{Qf}$ agrees with $(Qf)\circ\iota$ on $E$,
it follows that
\[
 H_{Qf}(r(\gamma(t)))
 =Qf(\iota(r(\gamma(t))))
 =Qf(\gamma(t))
 .
\]
Since $H_{Qf}'|_E=\mathcal RQf=0$, Fact~\ref{fact:real}(ii) applied to $H_{Qf}$ and
$r\circ\gamma$ gives
\[
 (H_{Qf}\circ r\circ\gamma)'=0
 \quad\text{almost everywhere on }A,
\]
because $r(\gamma(A))\subset E$.
The functions $Qf\circ\gamma$ and
$H_{Qf}\circ r\circ\gamma$ agree on $A$, so
Fact~\ref{fact:real}(i) shows that their derivatives
also agree almost everywhere on $A$.
Thus
$(Qf\circ\gamma)'=0$ almost everywhere on $A$. Consequently,
\[
 (Pf\circ\gamma)'=\one_A(f\circ\gamma)',\qquad
 (Qf\circ\gamma)'=
 \one_{[0,L]\setminus A}(f\circ\gamma)'
 \quad\text{almost everywhere}.
\]
This proves \eqref{eq:cut-projection-derivative}.
The disjoint derivative supports and Fact~\ref{fact:length-basic}(iv) give
\begin{align*}
 \lipnorm{f}
 =\sup_\gamma
   \max\{\norm{(Pf\circ\gamma)'}_\infty,
          \norm{(Qf\circ\gamma)'}_\infty\}=\max\{\lipnorm{Pf},\lipnorm{Qf}\},
\end{align*}
where the supremum runs over these paths.
Thus $P$ is an $M$-projection.

Finally, $\mathcal T$ identifies
$U$ isometrically with $L_\infty(E)$, whose
canonical predual $L_1(E)$ is strongly unique.
Let $U_*\subset U^*$ be the strongly unique concrete predual of $U$.
For every concrete predual $Y$ of $\Lip_0(M)$, Lemma~\ref{lem:Mprojection} gives
\[
 P^*Y=\{\lambda\circ P:\lambda\in U_*\},
\]
which is independent of $Y$.
In particular, considering the canonical predual $\lipfree{M}$ gives us
\[
 P^*(\lipfree{M})=P^*Y\subset Y,
\]
as required.
\end{proof}

\begin{lemma}\label{lem:detour}
Let $Y$ be a concrete predual of $\Lip_0(M)$, and
$\Phi\in \Lip_0(M)^{**}$ a functional vanishing on $Y\subset \Lip_0(M)^*$.
Put $f(x)=\Phi(\delta(x))$, $x\in M$. If $\gamma:[0,L]\to M$ is an injective rectifiable path from $p$ to $q$ parametrized by arclength then
\[
 (f\circ\gamma)'(t)=0
 \quad\text{for almost every $t\in[0,L]$ such that $\gamma(t)\notin S$.}
\]
\end{lemma}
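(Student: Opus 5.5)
The plan is to argue by contradiction and to localize near a point of $\gamma$ off $S$, where a detour exists. First, $f$ belongs to $\Lip_0(M)$: since $\norm{\delta(x)-\delta(y)}=d(x,y)$, we get $\lipnorm{f}\leq\norm{\Phi}$ and $f(0)=0$. So $f\circ\gamma$ is differentiable almost everywhere. Suppose the conclusion fails. Then there are $c>0$ and a set $T\subset\gamma^{-1}(M\setminus S)$ of positive measure on which, after possibly replacing $\Phi$ by $-\Phi$, we have $(f\circ\gamma)'>c$. Shrinking $T$, we may assume that every $t\in T$ is a density point of $T$ and a point where \eqref{eq:metric-speed} holds (Lemma~\ref{lem:arc}).

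Next we localize. Fix $t_0\in T$ and $z=\gamma(t_0)\notin S$. By the proof of Lemma~\ref{lem:cutset}, there is a path $\sigma$ from $p$ to $q$ whose compact image misses a ball $B(z,3\rho)$. Let $U$ be the component of $M\setminus\cl{B}(z,2\rho)$ containing the image of $\sigma$; this is legitimate by Fact~\ref{fact:length-basic}(i). Let $B$ be the component of $M\setminus\cl{U}$ containing the open ball of radius $\rho$ around $z$, and put $W=\partial B$ and $A=M\setminus\cl{B}$. Lemma~\ref{lem:boundary} makes $\const{A}$, $\const{B}$ and $\const{W}$ universally weak$^*$ closed. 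Choose a short interval $I=[a,b]\ni t_0$ with $\gamma(I)\subset B$ and $\lambda(T\cap I)>(1-\eta)(b-a)$ for a small $\eta$. The key geometric point is that $\gamma$ enters $B$ before time $a$ and leaves it after time $b$, while $p,q\in\cl{A}$ are joined by $\sigma$ inside $A$. Hence, modulo $\const{A}$, the arc $\gamma|_I$ carries no ``forced'' increment of $f$.

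Then we produce elements of $Y$. By Lemma~\ref{lem:restriction}, applied with $W'=A\cup W$ (or directly with $\const{A}$), the quotient $\Lip_0(M)/\const{\cl A}$ has a predual $V=Y\cap\const{\cl A}^\perp$, and it suffices to find, inside $V$, a functional $\mu$ with $\mu(f)\geq c'(b-a)$ that is approximated in norm by elements of $\lipfree{M}\cap\const{\cl A}^\perp$ lying in $Y$. The first candidate is $\mu=\delta(\gamma(b))-\delta(\gamma(a))$ corrected by an element supported on $W$. More precisely, I would split the increment along $\gamma$ into the part inside $\cl{B}$ and compare it with the weak$^*$-closed decomposition \eqref{eq:sum of balls}. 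The aim is to show that the functional $h\mapsto\int_I\one_T\,(h\circ\gamma)'$ agrees, on $B_{\const{A}}$, with a $\sigma(\Lip_0(M),Y)$-continuous functional up to an error $O(\eta(b-a))$. Applying $\Phi$, which vanishes on $Y$, then gives $c\,\lambda(T\cap I)\leq O(\eta(b-a))$, a contradiction for small $\eta$.

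The main obstacle is exactly this last step: showing that the path-integral functional along a non-cut portion of $\gamma$ is (approximately) $Y$-continuous, i.e. lies in $Y$ up to small norm. Here the detour should be used as in Lemma~\ref{lem:cut-projection2}, but in reverse. Off $S$ there is no $L_\infty$ component in the derivative along $\gamma$: any bounded family of functions with derivative $\one_T$ on $\gamma|_I$ can be modified on $\cl{B}$ near $W$, via the detour and Fact~\ref{fact:length-basic}(iii), into functions constant on $W$ without changing their values at molecules of $\gamma|_I$ by much. I would first try to prove that the weak$^*$ closed convex set $B_{\const{W}}$, intersected with $\{h:(h\circ\gamma)'\geq c \text{ a.e. on } T\cap I\}$, is universally weak$^*$ compact. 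Its $\sigma(\cdot,Y)$- and $\sigma(\cdot,\lipfree{M})$-closures then coincide, and $\Phi$ can be evaluated against it.
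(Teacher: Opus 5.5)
Your proposal has a genuine gap, and it sits exactly at the step you yourself call the main obstacle. The fixed-scale set-up you choose cannot close it.

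\textbf{Why your set-up fails.} $\Phi$ is known only on $Y$, where it vanishes, and on $\lipfree{M}$, where it acts through $f$. So everything hinges on showing that some $\nu\in\lipfree{M}$ carrying the increment of $f$ along $\gamma|_I$ is norm-close to $Y$. Examples are $\nu=\delta(\gamma(b))-\delta(\gamma(a))$, or your path functional $h\mapsto\int_I\one_T(h\circ\gamma)'$, which does lie in $\lipfree{M}$ as a norm limit of finite combinations of evaluations.

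The elements of $Y$ your construction provides are $R_W^*(J)\subset Y\cap\const{W}^\perp$ from Lemma~\ref{lem:restriction} and your $V=Y\cap\const{A}^\perp$. All of them lie in $\const{A}^\perp$, because $\const{A}\subset\const{W}$.

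But $\tau=\dist(\gamma(I),W)$ does not shrink with $I$, and every path from $\gamma(I)\subset B$ to $\cl{A}$ meets $W$. So once $b-a\leq\tau$, the function equal to $d(\cdot,\gamma(a))$ on $\gamma(I)$ and to $0$ on $\cl{A}$ is $1$-Lipschitz. A McShane extension, minus its value at $0$, gives $g\in B_{\const{A}}$ with $\nu(g)=\norm{\nu}$ for $\nu=\delta(\gamma(b))-\delta(\gamma(a))$. The path functional takes at $g$ a value at least $d(\gamma(a),\gamma(b))-\lambda(I\setminus T)$. Hence these $\nu$ stay at distance comparable to $b-a$ from $\const{A}^\perp$. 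No correction of the kind you suggest can bring the error down to $O(\eta(b-a))$.

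Approximating $\nu$ only on $B_{\const{A}}$ does not help either. That is the original predual question for the dual space $\const{A}$, which is itself a Lipschitz space (over $\cl{B}$ with $W$ collapsed to a point). Even if it were achieved, the remainder would only be close to $\const{A}^\perp$, where $\Phi$ is uncontrolled.

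The fallback, universal weak$^*$ compactness of $B_{\const{W}}\cap\{h:(h\circ\gamma)'\geq c\text{ a.e.\ on }T\cap I\}$, is unproved. You also do not say how it would yield an element of $Y$ near $\nu$, which is what is needed since $\Phi$ acts on $\Lip_0(M)^*$.

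\textbf{The missing idea.} Work pointwise, at every $t$ where $f\circ\gamma$ is differentiable and \eqref{eq:metric-speed} holds. Use cut-sets at a scale $h\to0$ tailored to the molecule $m_{p_hq_h}$, where $p_h=\gamma(t-h)$ and $q_h=\gamma(t+h)$.
\begin{enumerate}
\item The paper takes the component $U_h$ of the lens $\{x:\max\{d(x,p_h),d(x,q_h)\}<(2+\ep)h\}$ containing $\gamma([t-h,t+h])$.
\item It then takes the component $C_h$ of $M\setminus\cl{U_h}$ containing $\gamma(t\pm(1+4\ep)h)$. These two points are joined outside the lens through the detour; this is where $\gamma(t)\notin S$ is used.
\item Put $W_h=\partial C_h$. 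By Lemma~\ref{lem:boundary}, $\const{W_h}$ is weak$^*$ closed. Moreover $W_h$ lies on the level set $\max\{d(\cdot,p_h),d(\cdot,q_h)\}=(2+\ep)h$ and contains points of $\gamma$ within $4\ep h$ of $p_h$ and of $q_h$.
\item This geometry yields an explicit $F\in B_{\Lip_{w_0}(W_h)}$ with $\norm{m_{uv}-m_{p_hq_h}}\leq8(1-F(m_{uv}))$ for all distinct $u,v\in W_h$. This is a quantitative form of Lemma~\ref{lem:exposed}.
\item The predual $J$ from Lemma~\ref{lem:restriction} is norming, so some $\zeta\in B_J$ with $\zeta(F)>1-8\ep$ satisfies $\norm{R_{W_h}^*\zeta-m_{p_hq_h}}\leq64\ep$.
\item Therefore $\abs{\Phi(m_{p_hq_h})}\leq64\ep\norm{\Phi}$. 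Letting $h\to0$ and then $\ep\to0$ gives $(f\circ\gamma)'(t)=0$.
\end{enumerate}
Neither the shrinking cut-set passing next to $p_h,q_h$ nor the almost-exposing function appears in your plan. With them, your contradiction and density-point framing becomes unnecessary.
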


\begin{proof}
The function $f$ belongs to $\Lip_0(M)$ and $\lipnorm{f}\leq\norm\Phi$.
Fix $t\in(0,L)$ at which $f\circ\gamma$ is differentiable and
\eqref{eq:metric-speed} holds. Such points form a set of full
measure. We will prove that $(f\circ\gamma)'(t)=0$ for such $t$ assuming that $z=\gamma(t)\notin S$. So assume this,
then $p,q$ lie in the same component of $M\setminus\{z\}$.
By Fact~\ref{fact:length-basic}(i),
there is a path from $p$
to $q$ avoiding $z$, whose compact image will serve as a fixed
detour.

Fix $0<\ep<1/8$. By \eqref{eq:metric-speed}, there is
$0<h_0<\min\{t,L-t\}$ such that, for $|s|,|s'|\leq h_0$,
\begin{equation}\label{eq:linear-control}
 d(\gamma(t+s),\gamma(t+s'))
 \geq |s-s'|-\ep(|s|+|s'|).
\end{equation}
For $0<h<h_0/(1+4\ep)$, set
\[
 p_h=\gamma(t-h),\qquad q_h=\gamma(t+h),\qquad
 D_h=d(p_h,q_h),\qquad R_h=(2+\ep)h.
\]
The arclength upper bound and \eqref{eq:linear-control} give
\begin{equation}\label{eq:Dh}
 2(1-\ep)h\leq D_h\leq2h<R_h.
\end{equation}
Set
\begin{align*}
O_h &= \{x\in M:\max\{d(x,p_h),d(x,q_h)\}<R_h\} \\
L_h &= \{x\in M:\max\{d(x,p_h),d(x,q_h)\}\leq R_h\}
\end{align*}
and let $U_h$ be the component of $O_h$
containing $\gamma([t-h,t+h])$;
this image is contained in $O_h$ because $\gamma$ is
$1$-Lipschitz and $2h<R_h$.
The set $U_h$ is open and connected by Fact~\ref{fact:length-basic}(i).

\begin{figure}[H]
\centering
\begin{tikzpicture}[x=1.2cm,y=.72cm,font=\small,
                    line cap=round,line join=round]
 % A schematic planar lens. The small endpoint gaps are exaggerated.
 \pgfmathsetmacro{\lensangle}{acos(1/2.3)}
 \path[fill=black!7,draw=black!65,line width=.7pt]
  (0,{sqrt(2.3*2.3-1)})
  arc[start angle=\lensangle,end angle=-\lensangle,radius=2.3]
  arc[start angle=180+\lensangle,end angle=180-\lensangle,radius=2.3]
  --cycle;
 \draw[thin] (-4,0)--(4,0);
 \draw[line width=1.4pt] (-1,0)--(1,0);
 \draw[thick,densely dashed] (-4,0)
  ..controls (-3.5,3.3) and (3.5,3.3)..(4,0);
 \node at (0,2.88) {fixed detour};
 \node at (0,.8) {$U_h$};
 \node at (-2.65,1.25) {$B_h$};
 \draw[thin,black!65] (.77,-1.48)--(1.45,-1.15)--(2.1,-1.15);
 \node[right] at (2.1,-1.15) {$W_h=\partial C_h$};
 \foreach \x in {-4,-2.05,-1.3,-1,0,1,1.3,2.05,4}
  \fill (\x,0) circle[radius=1.4pt];
 \node[below=4pt] at (-4,0) {$p$};
 \node[below=4pt] at (4,0) {$q$};
 \node[below=4pt] at (-2.05,0) {$x_h^-$};
 \node[below=4pt] at (2.05,0) {$x_h^+$};
 \node[above left=3pt] at (-1.3,0) {$a_h$};
 \node[above right=3pt] at (1.3,0) {$b_h$};
 \node[below right=4pt] at (-1,0) {$p_h$};
 \node[below left=4pt] at (1,0) {$q_h$};
 \node[below=4pt] at (0,0) {$z$};
 \node[below=4pt] at (3.1,0) {$\gamma$};
\end{tikzpicture}
\caption{Illustration of the construction.}
\label{fig:detour}
\end{figure}

We claim that the outer points
\[
 x_h^-=\gamma(t-(1+4\ep)h),\qquad
 x_h^+=\gamma(t+(1+4\ep)h)
\]
can be joined outside the closed set $L_h$ when $h$ is small enough.
The construction is illustrated in Figure~\ref{fig:detour}.
For $(1+4\ep)h\leq|s|\leq h_0$, applying
\eqref{eq:linear-control} with $s'=-h$ and $s'=h$ gives
\begin{align*}
 \max\{d(\gamma(t+s),p_h),d(\gamma(t+s),q_h)\}
 &\geq\max\{|s+h|,|s-h|\}-\ep(|s|+h)=(1-\ep)(|s|+h)\\
 &\geq(1-\ep)(2+4\ep)h>R_h
\end{align*}
where the last inequality follows from
$(1-\ep)(2+4\ep)-(2+\ep)=\ep-4\ep^2>0$.
Therefore the portions of $\gamma$ between $x_h^-$ and $\gamma(t-h_0)$ and between $x_h^+$ and $\gamma(t+h_0)$ lie outside $L_h$. On the other hand,
the two portions of $\gamma$ with parameter outside
$(t-h_0,t+h_0)$ have compact images avoiding $z$, by injectivity.
Their union with the fixed detour also stays a positive distance
from $z$. Thus they will be outside $L_h$ for sufficiently small $h$, as
\[
 L_h\subset B(z,(3+\ep)h)
\]
since $d(z,p_h)\leq h$. To summarize, if $h$ is small enough then the path
from $x_h^-$ backwards along $\gamma$ to $p$, along the detour to
$q$, and backwards along the final part of $\gamma$ to $x_h^+$
does not intersect $L_h$. This proves the claim.

Since $\cl{U_h}\subset L_h$, $x_h^+$ and $x_h^-$ lie in the same
component $C_h$ of $M\setminus\cl{U_h}$. Put $W_h=\partial C_h$.
By Lemma~\ref{lem:boundary}, $\const{W_h}$ is weak$^*$ closed
with respect to $Y$. We also have
\begin{equation}\label{eq:Wh-level}
 \max\{d(w,p_h),d(w,q_h)\}=R_h\qquad \text{for all }w\in W_h.
\end{equation}
Indeed, Fact~\ref{fact:length-basic}(i) and the inclusion
$\partial\cl{U_h}\subset\partial U_h$ give
\[
 W_h\subset\partial(M\setminus\cl{U_h})=\partial\cl{U_h}
 \subset\partial U_h\subset\partial O_h
\]
and \eqref{eq:Wh-level} follows from the continuity of $x\mapsto\max\{d(x,p_h),d(x,q_h)\}$.

Fix $w_0\in W_h$ and define for $w\in W_h$
\[
 \psi(w)=
 \left(\frac{D_h}2-\frac34d(w,p_h)\right)_+
 -
 \left(\frac{D_h}2-\frac34d(w,q_h)\right)_+,
 \qquad F(w)=\psi(w)-\psi(w_0),
\]
where $r_+=\max\{r,0\}$.
We claim that $F\in B_{\Lip_{w_0}(W_h)}$ and
\begin{equation}\label{eq:lens-affine-bound}
 \norm{m_{uv}-m_{p_hq_h}}
 \leq8\bigl(1-F(m_{uv})\bigr)
 \qquad \text{for $u,v\in W_h$, $\ u\ne v$}.
\end{equation}

The two positive parts defining $\psi$ are
$3/4$-Lipschitz and, by \eqref{eq:Wh-level} and $R_h\geq D_h$,
cannot both be positive at a point of $W_h$.
Thus $\psi$ is $3/4$-Lipschitz on each of the sets
$\{\psi\geq0\}$ and $\{\psi\leq0\}$.
Consequently, unless $\psi(u)>0>\psi(v)$, we have
$F(m_{uv})\leq3/4$, and \eqref{eq:lens-affine-bound}
follows from $\norm{m_{uv}-m_{p_hq_h}}\leq2$.

Suppose now that $\psi(u)>0>\psi(v)$.
Then $d(u,p_h),d(v,q_h)<2D_h/3$, so \eqref{eq:Wh-level}
gives $d(u,q_h)=d(v,p_h)=R_h$.
Put $t=d(u,v)$ and $c=d(u,p_h)+d(v,q_h)$, then
the triangle inequality yields
\[
 t\geq R_h-\min\{d(u,p_h),d(v,q_h)\}
   \geq D_h-\frac c2.
\]
Since $\psi(u)-\psi(v)=D_h-3c/4$, this inequality implies
\[
 1-F(m_{uv})
 =\frac{t-D_h+3c/4}{t}
 \geq\frac{c}{4t}.
\]
In particular, $F(m_{uv})\leq1$, and Lemma~\ref{lm:distance of molecules} gives
\[
 \norm{m_{uv}-m_{p_hq_h}}
 \leq\frac{2c}{t}
 \leq8\bigl(1-F(m_{uv})\bigr).
\]
Considering all possible pairs $(u,v)$ proves that $F$ is
$1$-Lipschitz and establishes \eqref{eq:lens-affine-bound}.

The restriction of $\gamma$ to
$[t-(1+4\ep)h,t-h]$ joins $x_h^-\in C_h$ to
$p_h\in U_h$ and therefore meets $W_h$ at some point $a_h$.
The restriction to $[t+h,t+(1+4\ep)h]$ similarly meets
$W_h$ at some point $b_h$, which is distinct from $a_h$ because $\gamma$ is injective. We have
$$
d(a_h,p_h),d(b_h,q_h)\leq 4\ep h<\frac{2}{3}D_h
$$
so $\psi(a_h)>0$ and $\psi(b_h)<0$. Putting $\rho=d(a_h,p_h)+d(b_h,q_h)\leq 8\ep h$,
direct evaluation gives
\[
 F(m_{a_hb_h})
 =\frac{D_h-3\rho/4}{d(a_h,b_h)}
 \geq\frac{D_h-3\rho/4}{D_h+\rho}
 =1-\frac{7\rho}{4(D_h+\rho)}\geq 1-\frac{7\rho}{4D_h}
\geq 1-\frac{56\ep h}{8(1-\ep)h} > 1-8\ep .
\]

Lemma~\ref{lem:restriction} provides a concrete predual
$J$ of $\Lip_{w_0}(W_h)$ with $R_{W_h}^*(J)\subset Y$.
Since $J$ is $1$-norming, we may choose $\zeta\in B_J$ with
$\zeta(F)>1-8\ep$.
For every $g\in B_{\Lip_0(M)}$ and distinct $u,v\in W_h$,
\eqref{eq:lens-affine-bound} gives
\[
 (R_{W_h}g+8F)(m_{uv})
 \leq g(m_{p_hq_h})+\norm{m_{uv}-m_{p_hq_h}}+8F(m_{uv})
 \leq g(m_{p_hq_h})+8.
\]
Taking the supremum over all ordered pairs $u\ne v$
in $W_h$, we obtain
$\lipnorm{R_{W_h}g+8F}\leq8+g(m_{p_hq_h})$.
Consequently,
\[
 (R_{W_h}^*\zeta-m_{pq})(g)
 =\zeta(R_{W_h}g+8F)-8\zeta(F)-g(m_{p_hq_h})\leq8\bigl(1-\zeta(F)\bigr)<64\ep.
\]
Taking the supremum over $g\in B_{\Lip_0(M)}$ yields
$\norm{R_{W_h}^*\zeta-m_{p_hq_h}}\leq 64\ep$.
Finally, because $\Phi$ vanishes on $R_{W_h}^*\zeta\in Y$ and $D_h\leq2h$,
\[
 \frac{|f(q_h)-f(p_h)|}{2h}
 =\frac{D_h}{2h}\,|\Phi(m_{p_hq_h})|
 \leq|\Phi(m_{p_hq_h})|
 \leq\norm\Phi\,\norm{R_{W_h}^*\zeta-m_{p_hq_h}}
 \leq\norm\Phi\,64\ep.
\]
Since $f\circ\gamma$ is differentiable at $t$, letting $h\to 0$ and then $\ep\to 0$ yields $(f\circ\gamma)'(t)=0$.
\end{proof}

\begin{theorem}\label{thm:length}
If $M$ is a complete length space, then $\lipfree{M}$ is the strongly
unique isometric predual of $\Lip_0(M)$.
\end{theorem}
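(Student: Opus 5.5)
By Fact~\ref{fact:concrete-preduals}, it suffices to show that $\lipfree{M}\subset Y$ for an arbitrary concrete predual $Y$ of $\Lip_0(M)$. Since $Y$ is norm closed, it is enough to show that every molecule $m_{pq}$ with $p\neq q$ lies in $Y$. I argue by contradiction. Suppose some $m_{pq}\notin Y$. Hahn--Banach then gives $\Phi\in\Lip_0(M)^{**}$ vanishing on $Y$ with $\Phi(m_{pq})\neq 0$. Set $f(x)=\Phi(\delta(x))$, so $f\in\Lip_0(M)$ and $f(q)-f(p)=d(p,q)\,\Phi(m_{pq})\neq0$. The aim is to show that $f(q)-f(p)=0$.

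Fix the cut-set $S$ for the pair $p,q$ as in \eqref{eq:cutset}, together with the $M$-projection $P$ of Lemma~\ref{lem:cut-projection2}. By Lemma~\ref{lem:arc}, choose an injective rectifiable path $\gamma:[0,L]\to M$ from $p$ to $q$, parametrized by arclength. Then
\[
 f(q)-f(p)=\int_0^L (f\circ\gamma)'(t)\,dt ,
\]
and I split the integral according to whether $\gamma(t)\in S$ or not. On $\{t:\gamma(t)\notin S\}$, Lemma~\ref{lem:detour} gives $(f\circ\gamma)'=0$ almost everywhere. On $A=\gamma^{-1}(S)$, \eqref{eq:cut-projection-derivative} gives $(f\circ\gamma)'=(Pf\circ\gamma)'$ almost everywhere.

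It remains to show that $(Pf\circ\gamma)'=0$ almost everywhere on $A$. For this I would use that $Pf$ is determined by $\Phi$ restricted to $P^*(\lipfree M)$. For every $x\in M$,
\[
 Pf(x)=\Phi(\delta(x))\circ P=\Phi\bigl(P^*\delta(x)\bigr)\;?
\]
This identity needs care, since $f$ was defined through $\Phi$ rather than as an element we can apply $P$ to directly. The cleaner route is the following. Consider $\Phi\circ P^{**}$ and $g(x)=\Phi(P^*\delta(x))$. By Lemma~\ref{lem:cut-projection2}, $P^*\delta(x)\in P^*(\lipfree M)\subset Y$, so $g\equiv0$. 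Now $Pf$ corresponds to $g$ via the $L_\infty$ representation: since $\mathcal T$ and $\mathcal R$ commute with the relevant pointwise structure, this should give $Pf=g$ on $S$, hence $Pf\circ\gamma$ is constant on $A$ and its derivative vanishes almost everywhere there.

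This identification is the main obstacle: the map $f\mapsto$ ``$\Phi\circ\delta$'' is not obviously intertwined with $P$. If the direct identification fails, I would argue instead through the representation of $\Phi$ as an element of $\Lip_0(M)^{**}$. There, $\Phi(P^*\mu)=0$ for all $\mu\in\lipfree M$. I would express $Pf$ through the derivative data $\mathcal R f=H_f'|_E$ and show that $\mathcal R f=0$ in $L_\infty(E)$ by testing against $L_1(E)$. Concretely, for an interval $[a,b]$ with endpoints in $E$, the functional $\delta(\iota(b))-\delta(\iota(a))$ composed with $P$ gives $\int_a^b\one_E\,\mathcal R(\cdot)$, and $\Phi$ vanishes on it. Differentiating in the endpoints yields $\mathcal Rf=0$ almost everywhere on $E$.

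Once $(f\circ\gamma)'=0$ holds almost everywhere on all of $[0,L]$, we get $f(q)=f(p)$. This contradicts $f(q)-f(p)\neq0$, so every molecule lies in $Y$ and hence $Y=\lipfree M$.
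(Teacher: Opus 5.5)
Your overall structure matches the paper's proof. You use Hahn--Banach to get $\Phi$ vanishing on $Y$, set $f=\Phi\circ\delta$, take an injective arclength path from Lemma~\ref{lem:arc}, apply Lemma~\ref{lem:detour} off $S$, and apply \eqref{eq:cut-projection-derivative} on $\gamma^{-1}(S)$. Everything then reduces to showing $Pf=0$, or at least $(Pf\circ\gamma)'=0$ a.e.\ on $A$.

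\textbf{The gap.} That last step, which you yourself call the main obstacle, is never proved. Your first route ends with ``this should give $Pf=g$'' and gives no argument. Your fallback route has the same hole in disguise. From $\Phi(\nu)=0$ for $\nu=P^*\bigl(\delta(\iota(b))-\delta(\iota(a))\bigr)$ you want to infer $\nu(f)=\int_a^b\one_E\,\mathcal Rf=0$, so you are tacitly using $\Phi(\nu)=\nu(f)$. For a general $\nu\in\Lip_0(M)^*$ this is false, because $\Phi\in\Lip_0(M)^{**}$ need not be weak$^*$ continuous. It is exactly the intertwining you flagged as unjustified.

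\textbf{The missing step.} Two short observations close it.
\begin{enumerate}
\item $\Phi(\mu)=\mu(f)$ for every $\mu\in\lipfree{M}$. Both sides are norm-continuous linear functionals of $\mu$ on $\lipfree{M}$ and agree on each $\delta(x)$ by the definition of $f$, hence on the closed span.
\item $P^*(\lipfree{M})\subset\lipfree{M}$. Apply the predual inclusion of Lemma~\ref{lem:cut-projection2} with $Y=\lipfree{M}$, which is legitimate because the canonical predual is itself a concrete predual.
\end{enumerate}
Together these give $P^*\mu\in Y\cap\lipfree{M}$ for every $\mu\in\lipfree{M}$. Therefore
\[
 Pf(x)=\bigl\langle P^*\delta(x),f\bigr\rangle=\Phi\bigl(P^*\delta(x)\bigr)=0\qquad(x\in M),
\]
so $Pf=0$ outright. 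The $L_\infty(E)$ representation, testing against $L_1(E)$, and differentiating in the endpoints all become unnecessary. With this step inserted, your argument is the paper's proof.
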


\begin{proof}
Let $Y\subset\Lip_0(M)^*$ be any concrete predual of $\Lip_0(M)$;
by Fact~\ref{fact:concrete-preduals}, it suffices to prove
$\lipfree{M}\subset Y$.
By Hahn--Banach, this amounts to showing that every
$\Phi\in\Lip_0(M)^{**}$ vanishing on $Y$ also vanishes
on $\lipfree{M}$.
Define $f(x)=\Phi(\delta(x))$ as in Lemma~\ref{lem:detour}.
Note that
\begin{equation}\label{eq:Phi-u}
 \Phi(\mu)=\mu(f) \qquad \text{for all $\mu\in\lipfree{M}$.}
\end{equation}
This holds first for finite sums of evaluations by definition,
and then for all $\mu$ by norm continuity and density.

Fix distinct $p,q\in M$.
Using Lemma~\ref{lem:arc}, choose an injective rectifiable
path $\gamma:[0,L]\to M$ from $p$ to $q$, parametrized
by arclength.
Let $S$ be the set in \eqref{eq:cutset}, and take the
projection $P$ from Lemma~\ref{lem:cut-projection2}.
Applying its predual inclusion to both $Y$ and the
canonical predual $\lipfree{M}$ gives
\[
 P^*\mu\in Y\cap\lipfree{M}
 \qquad \text{for all $\mu\in\lipfree{M}$.}
\]
Thus \eqref{eq:Phi-u} and the fact that $\Phi$ vanishes
on $Y$ imply
\[
 \mu(Pf)=(P^*\mu)(f)=\Phi(P^*\mu)=0
 \qquad \text{for all $\mu\in\lipfree{M}$.}
\]
Since $\lipfree{M}$ separates points of $\Lip_0(M)$,
we conclude that $Pf=0$.
Now \eqref{eq:cut-projection-derivative} in Lemma~\ref{lem:cut-projection2} implies that $(f\circ\gamma)'(t)=0$ for almost every $t\in [0,L]$ with $\gamma(t)\in S$, and Lemma~\ref{lem:detour} gives the same conclusion for $\gamma(t)\notin S$. It follows that $f\circ\gamma$ is constant, and $f(p)=f(q)$.

Since $p,q$ were arbitrary and $f(0)=0$, we have just shown that $f=0$,
and \eqref{eq:Phi-u} proves that $\Phi$
vanishes on $\lipfree{M}$, as required.
\end{proof}

\section{Proof for all metric spaces}
\label{sec:general}

We now extend our main result to the general case. We will do so by embedding our metric space $M$ into a length space $N$ in such a way that preduals of their Lipschitz spaces are compatible, in the sense that the restriction operator is weak$^*$-to-weak$^*$ continuous. Strong uniqueness for $M$ is then reduced to strong uniqueness for $N$, which has already been established.

We first describe a general enlargement construction, which is standard. For the remainder of this section, we fix a complete metric space $M$. Let $\mathscr E$
be a family of ordered pairs $e=(p_e,q_e)$ of distinct
points of $M$. Put $\ell_e=d(p_e,q_e)$ and form the
disjoint union
\[
 X=M\sqcup\bigsqcup_{e\in\mathscr E}
       \bigl(\{e\}\times[0,\ell_e]\bigr).
\]
Following \cite[I.5.18]{BH}, equip $X$ with the extended
metric $d_X$ which agrees with the given metric on each
piece and is $+\infty$ between different pieces.
Let $\sim$ be the equivalence relation generated by
$(e,0)\sim p_e$ and $(e,\ell_e)\sim q_e$, and write
$\pi:X\to N:=X/{\sim}$ for the quotient map.

Equip $N$ with the quotient pseudometric of
\cite[I.5.19]{BH}:
\[
 d_N(x,y)=\inf\left\{
 \sum_{i=1}^n d_X(a_i,b_i):
 \begin{array}{l}
 n\geq1,\quad a_i,b_i\in X,\\
 \pi(a_1)=x,\quad \pi(b_n)=y,\\
 \pi(b_i)=\pi(a_{i+1})\quad(1\leq i<n)
 \end{array}
 \right\}.
\]
The infimum is finite, since each interval is attached
to $M$. We identify $M$ with its image in $N$, denote
the image of $\{e\}\times[0,\ell_e]$ by $I_e$, and write
$(e,t)$ for its point with parameter $t$.
The following lemma justifies these identifications
metrically and records the properties used below.

\begin{lemma}\label{lem:gluing-metric2}
The above construction has the following properties.
\begin{enumerate}[label=\textup{(\roman*)}]
\item\label{item:gluing-complete}
The function $d_N$ is a complete metric. The natural maps
from $M$ and $[0,\ell_e]$ into $N$ are isometric embeddings.

\item\label{item:gluing-endpoint}
For $z=(e,t)$ with $0<t<\ell_e$ and $x\in N\setminus I_e$,
\begin{equation}\label{eq:edge-exit-distance}
 d_N(z,x)=
 \min\{t+d_N(p_e,x),\,\ell_e-t+d_N(q_e,x)\}.
\end{equation}
In particular, there is an endpoint $a\in\{p_e,q_e\}$
such that $d_N(z,x)=d_N(z,a)+d_N(a,x)$.

\item\label{item:gluing-Lipschitz}
For $C\geq0$, a function $u:N\to\RR$ is $C$-Lipschitz
if and only if its restriction to $M$ and its restriction
to every $I_e$ are $C$-Lipschitz.

\item\label{item:gluing-diameter}
We have $\diam N\leq2\diam M$. In particular, if $M$ is bounded then $N$ is bounded.
\end{enumerate}
\end{lemma}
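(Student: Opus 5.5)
The plan is to analyse admissible chains (the sums in the definition of $d_N$) directly. The basic principle is that any chain can be shortened, without increasing its cost, to one that alternates between segments inside $M$ and segments inside single intervals. Passing from one piece to another is only possible through a glued endpoint $p_e$ or $q_e$, which are points of $M$.

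First we prove the key inequality. For $x,y\in M$, every chain from $x$ to $y$ has cost at least $d(x,y)$. A step inside $I_e$ from $(e,s)$ to $(e,s')$ costs $|s-s'|$. Group the chain into maximal excursions into a single interval: an excursion that enters $I_e$ at an endpoint and leaves at the same endpoint contributes a nonnegative amount that we can drop. An excursion that traverses from $p_e$ to $q_e$ costs at least $\ell_e=d(p_e,q_e)$. Replacing each excursion by the corresponding $M$-step, and applying the triangle inequality in $M$, gives the bound $d(x,y)$. The trivial chain gives the reverse inequality, so $M$ embeds isometrically. The same argument inside one interval shows that $d_N((e,s),(e,s'))=|s-s'|$. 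Any chain leaving $I_e$ must exit through $p_e$ or $q_e$ and re-enter, which costs at least $s+(\ell_e-s')$ or a similar amount, and this is at least $\ell_e\geq|s-s'|$, because $d_N(p_e,q_e)=\ell_e$ by the $M$ case.

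Next, for \ref{item:gluing-endpoint}, take $z=(e,t)$ interior and $x\notin I_e$. Any chain from $z$ to $x$ must leave $I_e$, and the first exit is at $p_e$ or $q_e$. The part of the chain before that exit costs at least $t$ or $\ell_e-t$ respectively, and the remainder costs at least $d_N(p_e,x)$ or $d_N(q_e,x)$. This gives $\geq$ in \eqref{eq:edge-exit-distance}; the inequality $\leq$ follows by concatenating chains. With this formula, positivity of $d_N$ follows by cases: if both points lie in $M\cup I_e$ use the previous step, and otherwise \eqref{eq:edge-exit-distance} gives a distance at least $\min\{t,\ell_e-t\}>0$. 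Hence $d_N$ is a metric.

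Completeness is where I expect the main work. Take a Cauchy sequence $(x_n)$. If infinitely many terms lie in one $I_e$ or in $M$, we conclude using completeness of that piece. The pieces are closed, as the formula shows: distances to interior points of $I_e$ are realized through the endpoints. Otherwise the sequence meets infinitely many distinct intervals. Then $d_N(x_n,x_m)$ for points on different intervals equals $d_N(x_n,a_n)+d_N(a_n,x_m)$ with $a_n$ an endpoint, by applying \eqref{eq:edge-exit-distance} twice. This forces $d_N(x_n,a_n)\to0$ with $a_n\in M$, so $(a_n)$ is Cauchy in $M$ and converges, and $x_n$ converges to the same limit.

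For \ref{item:gluing-Lipschitz}, necessity is immediate. For sufficiency, given a near-optimal chain from $x$ to $y$, refine it so that each link lies in $M$ or in a single $I_e$. Each link changes $u$ by at most $C$ times its cost, so summing gives $|u(x)-u(y)|\leq C\,d_N(x,y)+C\ep$. For \ref{item:gluing-diameter}, any $z\in I_e$ is within $\ell_e/2\leq\diam M/2$ of $M$, so $d_N(z,w)\leq\diam M/2+\diam M+\diam M/2=2\diam M$.
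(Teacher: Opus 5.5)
Your proof is correct, but it reaches (i) and (ii) by a different route from the paper's.

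\textbf{The paper's route.} The paper does not analyse chains directly. It glues each interval to $M$ along $\{p_e,q_e\}$ to obtain $M_e$, then glues all the $M_e$ along their common copy of $M$. It then invokes Bridson--Haefliger's gluing lemma and its multi-piece version, which give completeness and the isometric inclusion of the pieces for free. The remaining work is to check that this iterated gluing metric $\widetilde d$ coincides with the quotient pseudometric $d_N$. After that, \eqref{eq:edge-exit-distance} is read off by substituting the first-stage distance formula into the second-stage one. Parts (iii) and (iv) are done exactly as you do them.

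\textbf{What your route buys, and what it costs.} Your argument is self-contained and treats the whole, possibly uncountable, family $\mathscr E$ in one pass. Excursion-shortening gives the isometric embedding of $M$. The first-exit decomposition gives \eqref{eq:edge-exit-distance}, and positivity then follows from it. The price is that completeness must be proved by hand. There, one point needs tightening.

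\textbf{The step to tighten.} For $x_n=(e_n,t_n)$ and $x_m$ in the interior of a different interval, the endpoint realising the minimum in \eqref{eq:edge-exit-distance} may depend on $m$. So $a_n$ is not well defined as you phrase it. What you actually use is the bound
$d_N(x_n,x_m)\geq\min\{t_n,\ell_{e_n}-t_n\}=d_N(x_n,M)$, which holds whichever endpoint is involved. The fix is as follows. First pass to a subsequence lying on pairwise distinct intervals. Then define $a_n$ as the endpoint of $I_{e_n}$ nearer to $x_n$. With these choices your Cauchy argument for $(a_n)$ goes through.

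\textbf{Minor remarks.} In the positivity step, the case ``both points lie in $M\cup I_e$'' should read ``both in $M$ or both in the same $I_e$''. The mixed case, a point of $M$ and an interior point of $I_e$, is already covered by \eqref{eq:edge-exit-distance}. In (iii), no refinement of the chain is needed, since every link of finite cost already lies in a single piece.
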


\begin{proof}
If $\mathscr E=\varnothing$, all assertions are immediate.
Otherwise, first glue $M$ and $[0,\ell_e]$ along the
closed isometric subsets $\{p_e,q_e\}$ and
$\{0,\ell_e\}$, obtaining $M_e$. Then glue all the spaces
$M_e$ along their common copy of $M$.
By \cite[Lemma~I.5.24 and Exercise~I.5.25(1)]{BH},
both constructions give complete metric spaces with
isometric inclusions of their pieces; the common copy
of $M$ is closed because it is complete.
Denote the metric obtained on $N$ in this way by
$\widetilde d$.

The distance formula in \cite[Lemma~I.5.24]{BH}, applied
at the first stage, gives
\begin{equation}\label{eq:gluing-first-stage}
 d_{M_e}((e,t),a)
 =\min\{t+d(p_e,a),\,\ell_e-t+d(q_e,a)\}
 \qquad(a\in M).
\end{equation}
At the second stage, it gives
\begin{equation}\label{eq:gluing-second-stage}
 \widetilde d(x,y)
 =\inf_{a\in M}
   \bigl(d_{M_e}(x,a)+d_{M_f}(a,y)\bigr)
 \qquad(x\in M_e,\ y\in M_f,\ e\ne f).
\end{equation}

We first verify that $\widetilde d=d_N$.
For any $u,v\in M_e$, we have
$d_N(u,v)\leq d_{M_e}(u,v)$.
Indeed, if both points lie in the same original piece,
a quotient chain consisting of one pair gives this
inequality. Otherwise, \eqref{eq:gluing-first-stage}
applies, and each of its two expressions is the sum
$\sum_i d_X(a_i,b_i)$ for a quotient chain through
the corresponding endpoint.
Consequently, if $x,y\in M_e$, then
$d_N(x,y)\leq d_{M_e}(x,y)=\widetilde d(x,y)$.
If $x\in M_e$ and $y\in M_f$ with $e\ne f$, then
\[
 d_N(x,y)
 \leq d_N(x,a)+d_N(a,y)
 \leq d_{M_e}(x,a)+d_{M_f}(a,y)
 \qquad(a\in M).
\]
Taking the infimum in \eqref{eq:gluing-second-stage}
again gives $d_N(x,y)\leq\widetilde d(x,y)$.

Conversely, $\widetilde d$ agrees with the given metric
on every original piece. For every quotient chain
with finite $\sum_i d_X(a_i,b_i)$, its triangle
inequality therefore gives
\[
 \widetilde d(\pi(a_1),\pi(b_n))
 \leq\sum_{i=1}^n d_X(a_i,b_i).
\]
Taking the infimum proves $\widetilde d\leq d_N$.
Thus $\widetilde d=d_N$, proving \textup{(i)}.

To obtain \textup{(ii)}, let $z=(e,t)$ and
$x\in N\setminus I_e$. If $x\in M$, the assertion
is \eqref{eq:gluing-first-stage}.
Otherwise, $x$ lies in the interior of some $I_f$
with $f\ne e$. Substituting \eqref{eq:gluing-first-stage} into
\eqref{eq:gluing-second-stage} gives
\begin{align*}
 d_N(z,x)
 &=\inf_{a\in M}\min\bigl\{
     t+d(p_e,a)+d_{M_f}(a,x),\,\ell_e-t+d(q_e,a)+d_{M_f}(a,x)
   \bigr\}\\
 &=\min\Bigl\{
     t+\inf_{a\in M}\bigl(d(p_e,a)+d_{M_f}(a,x)\bigr),\,\ell_e-t+\inf_{a\in M}
       \bigl(d(q_e,a)+d_{M_f}(a,x)\bigr)
   \Bigr\}.
\end{align*}
Notice further that for $b\in\{p_e,q_e\}$, we have
\[
 \inf_{a\in M}
 \bigl(d(b,a)+d_{M_f}(a,x)\bigr)
 =d_{M_f}(b,x)=d_N(b,x),
\]
where the first equality follows from the triangle
inequality, with equality attained at $a=b$. Consequently,
\[
 d_N(z,x)=
 \min\{t+d_N(p_e,x),\,\ell_e-t+d_N(q_e,x)\},
\]
which is \eqref{eq:edge-exit-distance}.

For \textup{(iii)}, only the reverse implication needs
proof. In a quotient chain with finite
$\sum_i d_X(a_i,b_i)$, each pair $a_i,b_i$ lies in
one original piece. Since
$\pi(b_i)=\pi(a_{i+1})$, the assumed Lipschitz bounds
give
\[
 |u(x)-u(y)|
 \leq\sum_{i=1}^n
      |u(\pi(a_i))-u(\pi(b_i))|
 \leq C\sum_{i=1}^n d_X(a_i,b_i).
\]
Taking the infimum proves that $u$ is $C$-Lipschitz.

Finally, part \textup{(iv)} is clear if $M$ is unbounded. If $D=\diam M<\infty$, every point of $N$
is within distance $D/2$ of a point of $M$: for a point
of an attached interval, choose its nearer endpoint.
The triangle inequality gives
$\diam N\leq D/2+D+D/2=2D$, proving \textup{(iv)}.
\end{proof}

\begin{proposition}\label{prop:length-enlargement2}
If, in the construction above, we take
\[
 \mathscr E=\{(p,q)\in M\times M:
 p\ne q\text{ and }(p,q)\text{ fails property~(Z) in }M\}.
\]
then $N$ is a complete length space.
\end{proposition}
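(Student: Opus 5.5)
Completeness is Lemma~\ref{lem:gluing-metric2}\ref{item:gluing-complete}, so by Theorem~\ref{thm:AMC} it suffices to show that every pair of distinct points $x,y\in N$ has property~(Z). Equivalently, we may verify the midpoint-type criterion: for each $\ep>0$ we need $z\in N\setminus\{x,y\}$ with $d_N(x,z)+d_N(z,y)-d_N(x,y)\leq\ep\min\{d_N(x,z),d_N(z,y)\}$. Rather than argue pair by pair, we will show a stronger statement: for every $x\ne y$ in $N$ and every $\delta>0$ there is a rectifiable path in $N$ from $x$ to $y$ of length less than $d_N(x,y)+\delta$. This gives the length property directly.

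The first step handles pairs in $M$. Let $p\ne q\in M$. If $(p,q)$ fails (Z) in $M$, then $(p,q)\in\mathscr E$, and the segment $I_{(p,q)}$ is, by Lemma~\ref{lem:gluing-metric2}\ref{item:gluing-complete}, an isometric copy of $[0,d(p,q)]$ joining $p$ to $q$. This is a geodesic. If instead $(p,q)$ has (Z) in $M$, we run a dyadic refinement. Pick $z\in M$ with small defect relative to $\min\{d(p,z),d(z,q)\}$. Recursively subdivide each resulting pair, either by such a near-midpoint in $M$ or, when the pair fails (Z), by its attached segment. Choosing the defect tolerances $\ep_k$ summable across the levels keeps the total excess below $\delta$. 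The limit is built from a Cauchy sequence of piecewise paths, and completeness of $N$ gives a genuine path.

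The main obstacle is that property~(Z) only provides $z$ with $\min\{d(p,z),d(z,q)\}$ possibly tiny. In that case the refinement need not shrink the pieces, and the excess bound $\ep\min$ is small in absolute terms but gives no progress. I would try to avoid a direct construction by using the characterization behind Theorem~\ref{thm:AMC} in the other direction. Consider the set $G$ of points reachable from $p$ by paths of length at most $d_N(p,\cdot)+\eta$ with controlled excess, and run a transfinite or Zorn-type maximality argument. Take a maximal chain of points $p=z_0,z_1,\dots$ along which the accumulated excess is at most $\eta$ and which approaches $q$. Closedness follows from completeness. If the chain stalls at some $w\ne q$, apply (Z) to the pair $(w,q)$, or use the segment if $(w,q)\in\mathscr E$, to extend the chain and contradict maximality.

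It remains to handle pairs involving interior points of segments. Lemma~\ref{lem:gluing-metric2}\ref{item:gluing-endpoint} says that for $z=(e,t)$ and $x\notin I_e$ there is an endpoint $a$ with $d_N(z,x)=d_N(z,a)+d_N(a,x)$. So a near-geodesic from $z$ to $x$ is the subsegment from $z$ to $a$ followed by a near-geodesic from $a$ to $x$. If $x$ also lies inside some $I_f$, we apply this twice and reduce to a pair in $M$. Pairs within the same $I_e$ are joined either by the subsegment or through both endpoints, and the isometric embedding handles either case. One caveat remains: for pairs in $M$, $d_N$ could be smaller than $d$. By Lemma~\ref{lem:gluing-metric2}\ref{item:gluing-complete} it is not, since $d_N$ restricted to $M$ equals $d$.
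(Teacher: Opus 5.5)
There is a genuine gap in the one case that carries all the weight: pairs $p\ne q$ in $M$ that have property~(Z) in $M$. You correctly see that dyadic refinement breaks down, because the witness $z$ may have $\min\{d(p,z),d(z,q)\}$ much smaller than $d(p,q)$. But what you offer instead is not an argument. The ``chain'', its ordering, what it means to ``stall'', and why a maximal chain exists (the closedness you attribute to completeness) are never specified. More importantly, a chain of points with small accumulated excess is not a path. The two-point chain $\{p,q\}$ already has zero excess, so the whole difficulty is making the gaps between consecutive points small. Extending the chain from $w$ to a (Z)-witness $z$ for $(w,q)$ does not help with this. Either it only inserts another point, leaving the gap problem untouched, or it needs a near-geodesic path from $w$ to $z$, which is the original problem again. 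What you are sketching is in effect the nontrivial direction of Theorem~\ref{thm:AMC} (complete plus (Z) implies length), re-proved inside $N$, and the sketch does not supply its key step.

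The missing idea is that none of this is needed. You invoke Theorem~\ref{thm:AMC} in your first sentence and then abandon the reduction it gives. Since $N$ is complete, it suffices to check property~(Z) pair by pair, and every case is immediate with tools you already use.
\begin{itemize}
\item If $p,q\in M$ and $(p,q)$ has (Z) in $M$, the same witnesses work in $N$, because $d_N$ restricted to $M$ is $d$ (your own ``caveat'').
\item If $(p,q)\in\mathscr E$, the midpoint of $I_{(p,q)}$ is a witness with zero defect, since $[0,\ell_e]$ embeds isometrically.
\item If $x=(e,t)$ with $0<t<\ell_e$ and $y\in I_e\setminus\{x\}$, any point of $I_e$ strictly between them has zero defect.
\item If $x=(e,t)$ with $0<t<\ell_e$ and $y\notin I_e$, Lemma~\ref{lem:gluing-metric2}\textup{(ii)} gives an endpoint $a$ with $d_N(x,a)+d_N(a,y)=d_N(x,y)$. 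Also $a\notin\{x,y\}$, since $x\notin M$ and $y\notin I_e$.
\end{itemize}
This is exactly the paper's proof. Your reductions for segment points in the last paragraph are fine in themselves, but as written they rest on the unproved $M$-case.
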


\begin{proof}
Completeness follows from Lemma~\ref{lem:gluing-metric2}\textup{(i)}.
By Theorem~\ref{thm:AMC}, it remains to prove that $N$
has property~(Z).

Let $p\neq q\in M$. If $(p,q)$ has property~(Z) in $M$ then it still does in $N$, with the same witnesses. Otherwise, $(p,q)\in\mathscr E$ and its attached interval $I_{(p,q)}$ contains a midpoint
$w\notin\{p,q\}$ satisfying
\[
 d_N(p,w)+d_N(w,q)=d_N(p,q).
\]
Thus $(p,q)$ has property~(Z) in $N$ as well.

Now consider a pair $(x,y)$ with $x\notin M$, say $x=(e,t)$. If $y\in I_e\setminus\set{x}$, then any point of $I_e$ strictly between $x$ and $y$ witnesses property~(Z) of $(x,y)$. Otherwise, Lemma~\ref{lem:gluing-metric2}\textup{(ii)}
gives an endpoint $a\in\{p_e,q_e\}$ such that
\[
 d_N(x,a)+d_N(a,y)=d_N(x,y).
\]
Since $x\notin M$ and $y\notin I_e$,
we have $a\notin\{x,y\}$. Hence $a$ itself is a witness
for property~(Z) of $(x,y)$, completing the proof.
\end{proof}

Finally, we establish the required compatibility for preduals. This proposition applies to any family $\mathscr E$, not just the one from Proposition~\ref{prop:length-enlargement2}, as long as the hypothesis is satisfied.

\begin{proposition}\label{prop:compatible-predual}
Let $Y$ be a concrete predual of $\Lip_0(M)$ and suppose that $m_{p_eq_e}\in Y$ for all $e\in\mathscr E$.
Then $\Lip_0(N)$ has a concrete predual $Y_N$ for which the
restriction $R:\Lip_0(N)\to\Lip_0(M)$
satisfies $R^*Y\subset Y_N$. In particular, $R$ is continuous
from $\sigma(\Lip_0(N),Y_N)$ to $\sigma(\Lip_0(M),Y)$.
\end{proposition}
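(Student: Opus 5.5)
The plan is to realize $\Lip_0(N)$ isometrically as a weak$^*$ closed subspace of a dual space whose predual is built from $Y$ and $L_1$-spaces, and then to take $Y_N$ to be the induced quotient predual.

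First, I would set up the ambient space. Put
\[
 Z=Y\oplus_1\Bigl(\textstyle\bigoplus_{e\in\mathscr E}L_1[0,\ell_e]\Bigr)_{\ell_1},
 \qquad
 Z^*=\Lip_0(M)\oplus_\infty\Bigl(\textstyle\bigoplus_{e\in\mathscr E}L_\infty[0,\ell_e]\Bigr)_{\ell_\infty},
\]
where the identification of $Z^*$ uses that $Y$ is a concrete predual of $\Lip_0(M)$ and that $L_1^*=L_\infty$. Define
\[
 \Theta:\Lip_0(N)\to Z^*,\qquad \Theta f=\bigl(f|_M,\ ((f\circ\iota_e)')_{e\in\mathscr E}\bigr),
\]
where $\iota_e:[0,\ell_e]\to I_e$ is the isometric parametrization given by Lemma~\ref{lem:gluing-metric2}\ref{item:gluing-complete}.

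Second, I would show that $\Theta$ is an isometry onto
\[
 K=\Bigl\{(g,(k_e)):\ \int_0^{\ell_e}k_e=g(q_e)-g(p_e)\ \text{for all } e\in\mathscr E\Bigr\}.
\]
The isometry follows from Lemma~\ref{lem:gluing-metric2}\ref{item:gluing-Lipschitz}: the Lipschitz constant of $f$ is the maximum of $\lipnorm{f|_M}$ and the numbers $\lipnorm{f|_{I_e}}=\norm{(f\circ\iota_e)'}_\infty$. The inclusion $\Theta(\Lip_0(N))\subset K$ is the fundamental theorem of calculus. For surjectivity, given $(g,(k_e))\in K$, I define $f=g$ on $M$ and $f(\iota_e(t))=g(p_e)+\int_0^tk_e$ on $I_e$. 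The constraint guarantees consistency at the endpoint $q_e$, and Lemma~\ref{lem:gluing-metric2}\ref{item:gluing-Lipschitz} shows that $f$ is Lipschitz with $\lipnorm{f}=\norm{(g,(k_e))}$.

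Third, I would show that $K$ is weak$^*$ closed; this is exactly where the hypothesis enters. Since $m_{p_eq_e}\in Y$, each vector
\[
 z_e=\bigl(\ell_e\,m_{p_eq_e},\,-\one_{[0,\ell_e]}\ \text{in the $e$-th coordinate}\bigr)
\]
belongs to $Z$. Moreover $K=\{z_e:e\in\mathscr E\}^\perp$, so $K$ is weak$^*$ closed. Let $Z_0=\cl{\lspan}\{z_e\}$. Then the standard quotient duality identifies $K$ isometrically with $(Z/Z_0)^*$. Transporting through $\Theta$, I set
\[
 Y_N=\{\,\zeta\circ\Theta:\ \zeta\in Z\,\}\subset\Lip_0(N)^*.
\]
This set is the isometric image of $Z/Z_0$: the norm of $\zeta\circ\Theta$ equals the norm of $\zeta$ restricted to $K=Z_0^\perp$, which is the quotient norm. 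Hence $Y_N$ is a closed subspace, and the same identification shows that the evaluation map $\Lip_0(N)\to Y_N^*$ is a surjective isometry, mirroring the proof of Lemma~\ref{lem:restriction}. So $Y_N$ is a concrete predual of $\Lip_0(N)$.

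Finally, for $y\in Y$ the element $(y,0)\in Z$ gives $(y,0)\circ\Theta=y\circ R=R^*y$, so $R^*Y\subset Y_N$. The continuity of $R$ from $\sigma(\Lip_0(N),Y_N)$ to $\sigma(\Lip_0(M),Y)$ follows immediately, as in Lemma~\ref{lem:restriction}.

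I expect the main obstacle to be the second step: checking carefully that $\Theta$ is an isometry onto $K$. This relies on Lemma~\ref{lem:gluing-metric2}\ref{item:gluing-Lipschitz} and on the affine-plus-integral reconstruction of $f$ from $(g,(k_e))$. Once that identification is in place, the remaining steps are routine duality.
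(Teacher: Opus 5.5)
Your proposal is correct and is essentially the paper's proof: the same $\ell_1$-sum $Y\oplus_1\bigl(\bigoplus_e L_1[0,\ell_e]\bigr)_{\ell_1}$, the same isometry $\Theta$ onto the constraint subspace via Lemma~\ref{lem:gluing-metric2}\textup{(iii)}, the same quotient predual $\{\zeta\circ\Theta\}$, and the same identity $R^*y=(y,0)\circ\Theta$. One sign slip needs fixing: since $\ell_e m_{p_eq_e}=\delta(p_e)-\delta(q_e)$, pairing $(g,(k_e))$ with your $z_e$ gives $g(p_e)-g(q_e)-\int_0^{\ell_e}k_e$, so the $L_1$-coordinate must be $+\one_{[0,\ell_e]}$ (as in the paper's $\nu_e$) for $K=\{z_e\}^\perp$ to hold.
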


\begin{proof}
Form the Banach space
\[
 \mathcal H=Y\oplus_1
 \left(\bigoplus_{e\in\mathscr E}L_1([0,\ell_e])\right)_{\ell_1}.
\]
Since $Y^*=\Lip_0(M)$ canonically, its dual is
\[
 \mathcal H^*=\Lip_0(M)\oplus_\infty
 \left(\bigoplus_{e\in\mathscr E}L_\infty([0,\ell_e])\right)_{\ell_\infty},
\]
with pairing
\[
 \bigl\langle(f,(g_e)),(y,(a_e))\bigr\rangle
 =y(f)+\sum_{e\in\mathscr E}\int_0^{\ell_e}a_e(t)g_e(t)\,dt.
\]
All such sums are absolutely convergent; a member of an $\ell_1$
sum has at most countably many nonzero coordinates.

Let us see that $\Lip_0(N)$ is isometric to the subspace
\begin{equation}\label{eq:compatible-W}
 \mathcal W=
 \left\{(f,(g_e))\in\mathcal H^*:
     \int_0^{\ell_e}g_e(t)\,dt=f(q_e)-f(p_e)
     \text{ for every }e\in\mathscr E\right\}.
\end{equation}
Define an operator $\Theta:\Lip_0(N)\to\mathcal W$ by $\Theta u=(u|_M,(u_e'))$, where we write $u_e(t)=u(e,t)$ for $e\in\mathscr E$, $t\in[0,\ell_e]$. Then Lemma~\ref{lem:gluing-metric2}\textup{(iii)} gives
\begin{equation}\label{eq:norm-estimation}
 \lipnorm{u}
 =\max\left\{\lipnorm{u|_M}, \sup_{e\in\mathscr E}\lipnorm{u|_{I_e}}\right\}
 =\max\left\{\lipnorm{u|_M}, \sup_{e\in\mathscr E}\norm{u_e'}_\infty\right\}
 =\norm{\Theta u}_{\mathcal H^*},
\end{equation}
where the supremum is understood as zero if $\mathscr E=\varnothing$. Thus $\Theta$ is an isometry.
Now let $(f,(g_e))\in\mathcal W$ and define a function $u:N\to\RR$ by
\[
 u|_M=f,\qquad
 u(e,t)=f(p_e)+\int_0^t g_e(s)\,ds.
\]
The constraints make the values at endpoints $p_e,q_e$ consistent. It is clear that $\Theta u=(f,(g_e))$, and the computation \eqref{eq:norm-estimation} shows that $u$ is Lipschitz. Moreover $u(0)=f(0)=0$, so $u$ really belongs to $\Lip_0(N)$. Thus $\Theta$ is surjective.

Since $\delta(p_e)-\delta(q_e)=\ell_e m_{p_eq_e}\in Y$ by assumption, the vector
\[
 \nu_e=(\delta(p_e)-\delta(q_e),(0,\ldots,\one_{[0,\ell_e]},\ldots))
\]
belongs to $\mathcal H$ for all $e\in\mathscr E$. Set
$\mathcal V=\overline{\lspan}\{\nu_e:e\in\mathscr E\}$, with norm
closure in $\mathcal H$. Equation~\eqref{eq:compatible-W} says
exactly that $\mathcal W=\mathcal V^\perp$, consequently
$(\mathcal H/\mathcal V)^*=\mathcal W$ isometrically.
Define
\[
 \mathcal J:\mathcal H/\mathcal V\longrightarrow\Lip_0(N)^*,
 \qquad
 [\mathcal J(h+\mathcal V)](u)=\langle\Theta u,h\rangle.
\]
This is well defined because $\Theta u\in\mathcal V^\perp$.
Since $\Theta$ maps the unit ball of $\Lip_0(N)$ onto
the unit ball of $\mathcal V^\perp$, the canonical duality
$(\mathcal H/\mathcal V)^*=\mathcal V^\perp$ gives that $\mathcal J$ is a linear isometry, and its range
$Y_N:=\mathcal J(\mathcal H/\mathcal V)$ is a closed
subspace of $\Lip_0(N)^*$.

To verify that $Y_N$ is a concrete predual of $\Lip_0(N)$, consider the
canonical evaluation map
\[
 \kappa:\Lip_0(N)\longrightarrow Y_N^*,
 \qquad [\kappa(u)](\lambda)=\lambda(u).
\]
Regarded as a map onto $Y_N$, the isometry $\mathcal J$
has a surjective isometric adjoint
$\mathcal J^*:Y_N^*\to(\mathcal H/\mathcal V)^*=\mathcal W$.
For every $h\in\mathcal H$,
\[
 [\mathcal J^*\kappa(u)](h+\mathcal V)
 =[\mathcal J(h+\mathcal V)](u)
 =\langle\Theta u,h\rangle.
\]
Hence $\mathcal J^*\kappa=\Theta$. Since both
$\mathcal J^*$ and $\Theta$ are surjective linear
isometries, so is $\kappa$. Therefore $Y_N$ is a
concrete predual of $\Lip_0(N)$.

Finally, for $y\in Y$ and $u\in\Lip_0(N)$,
\[
 [\mathcal J((y,0)+\mathcal V)](u)
 =\langle\Theta u,(y,0)\rangle
 =y(Ru)
 =(R^*y)(u).
\]
Thus $R^*y=\mathcal J((y,0)+\mathcal V)\in Y_N$,
proving $R^*Y\subset Y_N$. In particular, every
$y\circ R$ is $\sigma(\Lip_0(N),Y_N)$-continuous.
By the definition of $\sigma(\Lip_0(M),Y)$, this proves
the asserted continuity of $R$.
\end{proof}

With these ingredients, we can finally prove the general case of our main theorem.

\begin{proof}[Proof of Theorem~\ref{thm:main}]
Fix a concrete predual $Y\subset\Lip_0(M)^*$.
Let $\mathscr E$ consist of all ordered pairs of distinct
points of $M$ failing property~(Z), and let $N$ be the
complete length enlargement given by
Proposition~\ref{prop:length-enlargement2}, with the same
base point as $M$. For every $e=(p_e,q_e)\in\mathscr E$,
Theorem~\ref{thm:GLPRZ} and Lemma~\ref{lem:exposed} give $m_{p_eq_e}\in Y$.
Proposition~\ref{prop:compatible-predual} therefore
provides a concrete predual $Y_N$ of $\Lip_0(N)$ such
that $R^*Y\subset Y_N$, where
$R:\Lip_0(N)\to\Lip_0(M)$ is the restriction to $M$.
By Theorem~\ref{thm:length}, $Y_N=\lipfree{N}$, so
\[
 R^*Y\subset\lipfree{N}.
\]

Let $j:\lipfree{M}\to\lipfree{N}$ be the canonical
isometric inclusion. Since $R=j^*$, the standard range--kernel identity gives
\[
 (\ker R)_\perp
 =(\ker j^*)_\perp
 =\overline{\operatorname{ran}j}^{\norm{\cdot}}
 =j\lipfree{M},
\]
where the preannihilator is taken in $\lipfree{N}$,
and the last equality holds because $j$ is an isometric
embedding. Every element of $R^*Y$ belongs to
$\lipfree{N}$ and annihilates $\ker R$. Consequently,
\[
 R^*Y\subset(\ker R)_\perp=j\lipfree{M}.
\]

To conclude, fix $y\in Y$ and choose $\mu\in\lipfree{M}$
such that $R^*y=j\mu$. For any $f\in\Lip_0(M)$,
McShane' theorem yields $u\in\Lip_0(N)$ with $Ru=f$.
Then
\[
 y(f)=(R^*y)(u)=(j\mu)(u)=\mu(Ru)=\mu(f).
\]
Thus $y=\mu\in\lipfree{M}$, proving $Y\subset\lipfree{M}$.
Since concrete preduals cannot properly contain each other, we get $Y=\lipfree{M}$, and Fact~\ref{fact:concrete-preduals} yields strong uniqueness.
\end{proof}

\appendix
\section{A counterexample to the codimension-one lemma}
\label{sec:counterexample}

Here, we prove that \cite[Lemma~3.1]{Weaver18} is false by constructing an explicit counterexample.

\begin{theorem}
\label{thm:counterexample}
There exists a Banach space $X$ such that
\begin{enumerate}[label={\upshape{(\alph*)}}]
\item $X$ is isomorphic to $\ell_1$,
\item $X$ has a strongly unique predual, and
\item $X$ has a weak$^*$ closed, $1$-complemented, $1$-codimensional subspace isometric to $\ell_1$.
\end{enumerate}
\end{theorem}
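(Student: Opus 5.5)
The plan is to work at the level of the predual. I will build a separable Banach space $Z$ with $Z^*\simeq\ell_1$ and an element $z_0\in Z$ such that three things hold. First, the quotient $Z/\RR z_0$ is isometric to $c_0$. Second, the strongly exposed points of $B_Z$ span a dense subspace of $Z$. Third, $Z^*$ admits a norm-one projection onto $z_0^\perp$.

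Set $X=Z^*$ and $K=z_0^\perp\subset X$. Then $K$ is weak$^*$ closed and $1$-codimensional, and it is isometric to $(Z/\RR z_0)^*=c_0^*=\ell_1$. Since $\ell_1$ has the two non-isomorphic preduals $c_0$ and $c$, the subspace $K$ does not even have a unique predual; this is exactly what makes the example contradict \cite[Lemma~3.1]{Weaver18}.

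Strong uniqueness for $X$ comes from Lemma~\ref{lem:exposed} applied to the Banach space $Z$. Every strongly exposed point of $B_Z$ lies in every concrete predual $Y\subset Z^{**}$ of $X=Z^*$. Since these points span a dense subspace of $Z$, we get $Z\subset Y$ for every such $Y$, and Fact~\ref{fact:concrete-preduals} then gives that $Z$ is the strongly unique predual. Part (a) is immediate, since $Z$ will be isomorphic to $c_0\oplus\RR\simeq c_0$, hence $X\simeq\ell_1$.

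My first concrete attempt is $Z=c_0\oplus\RR$ with
\[
 \norm{(x,t)}=\max\Bigl\{\norm{x}_\infty,\ |t|+\sum_n 2^{-n}|x_n|\Bigr\},\qquad z_0=(0,1).
\]
The quotient norm is $\inf_s\max\{\norm x_\infty,|t-s|+\sum_n2^{-n}|x_n|\}=\norm x_\infty$, because $\sum_n 2^{-n}|x_n|\le\norm x_\infty$. So $Z/\RR z_0=c_0$ isometrically. For the projection onto $K$, I would use the adjoint of a norm-one lifting. Let $L:c_0\to Z$ be $L(x)=(x,0)$. If $\norm{L}=1$, then $L^*:Z^*\to c_0^*$ composed with the quotient identification gives $P=q^*L^*$, a norm-one projection onto $K=q^*(\ell_1)$. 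With the norm above, $\norm{(x,0)}=\norm x_\infty$, so $L$ is an isometry and (c) follows.

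The main obstacle is (b): showing that the strongly exposed points of $B_Z$ have dense span. The natural candidates are $z_0=(0,1)$ and points of the form $(\pm e_n+\text{corrections},t)$ lying on the face where both terms in the max are active. The difficulty is that the $\ell_\infty$-part is flat: in a thin slice, the coordinates $x_m$ for large $m$ tend to be only loosely constrained.

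The first thing I will try is to compute $B_{Z^*}$ explicitly as the convex hull of the dual balls of the two norms, namely $\ell_1$ and the ``weighted $\ell_\infty$ plus $t$'' part. I will then look for functionals whose slices force every coordinate $x_m$ to be small. The weights $2^{-n}$ are the mechanism for this: on a point where the second term equals $1$ and is maximized, a large $|x_m|$ costs $2^{-m}$ in that term.

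If this particular norm turns out not to produce strongly exposed points, the fallback keeps the same structure (quotient by a line equal to $c_0$, with an isometric lifting) but changes the second term. I would replace it by a strictly convex, uniformly convex-in-$t$ term, such as $\bigl(t^2+\sum_n 4^{-n}x_n^2\bigr)^{1/2}$. In a space with a uniformly convex component, slices are easier to control, and the strong-exposedness argument can be run coordinatewise through an $\ep$–$\alpha$ estimate as in the reformulation recalled before Lemma~\ref{lem:exposed}.
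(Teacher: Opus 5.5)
\textbf{There is a genuine gap: part (b) is left as a plan, and both norms you propose provably fail it.}

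Your overall reduction is sound and is essentially the paper's framework. There $z_0=e_1$, the quotient $Y/\RR e_1$ is isometrically $c_0$, the lifting $(y_n)_{n\geq2}\mapsto(0,y_2,y_3,\dots)$ has norm one because $D\subset B_Y$, and (b) follows from Lemma~\ref{lem:exposed} and Fact~\ref{fact:concrete-preduals}. Your arguments for (a) and (c) are fine. One small slip: $c$ and $c_0$ are isomorphic; what matters is that they are non-isometric preduals of $\ell_1$.

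Neither of your norms realizes (b), because both unit balls have no strongly exposed points at all.
\begin{itemize}
\item Take $\norm{(x,t)}=\max\{\norm{x}_\infty,|t|+\sum_n2^{-n}|x_n|\}$, any $z=(x,t)$ of norm one, and any norming functional $(a,s)\in\ell_1\oplus\RR$.
\item For large $m$ (so that $|x_m|<1/2$), replace $x_m$ by $\pm1$ with sign opposite to $x_m$, and decrease $|t|$ by $2^{-m}(1-|x_m|)$. When $t=0$ no compensation is needed, since $\sum_n2^{-n}(1-|x_n|)>0$ for $x\in c_0$.
\item The resulting point $z_m$ lies in $B_Z$ and satisfies $\norm{z_m-z}\geq1$.
\item Yet $|(a,s)(z_m-z)|\leq2|a_m|+|s|2^{-m}\to0$, so no functional strongly exposes $z$. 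For $z=(0,1)$ the sequence is just $(\pm e_m,1-2^{-m})$.
\end{itemize}
So the weights do the opposite of what you hoped: a unit change in a tail coordinate costs only $2^{-m}$, and $t$ absorbs it. Your Euclidean fallback fails in exactly the same way. The compensation in $t$ is $O(4^{-m})$, the weights $4^{-n}$ again make tail directions essentially free, and the maximum with $\norm{x}_\infty$ keeps the ball flat along every $e_m$.

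\textbf{What is missing.} You need a norm whose ball is pointed, i.e.\ $\ell_1$-like, in the tail directions at the points you want exposed. That is, $\norm{z+\eta e_m}\geq\norm{z}+c\eta$, up to $o(1)$ compensations, with $c>0$ independent of $m$. A maximum with $\norm{x}_\infty$ can never give this; a convex hull (infimal convolution) can.

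The paper takes $B_Y=\cl{\conv}(\Gamma\cup D)$, with $\Gamma=\{\pm e_1\pm e_n:n\geq2\}$ and $D$ the $c_0$-ball of $\{y_1=0\}$. In your coordinates this is
\[
\norm{(x,t)}=\inf_{x=u+v}\bigl(\norm{u}_\infty+\max\{|t|,\norm{v}_1\}\bigr).
\]
This norm still has quotient $c_0$ by $\RR(0,1)$ and still has the isometric lifting $x\mapsto(x,0)$, so your framework survives. The functional $\varphi=\frac13e_1^*+\frac23e_n^*$ equals $1$ at $e_1+e_n$ and is at most $2/3$ on the rest of $\Gamma\cup D$. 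Write any $z$ in the bidual ball as $\tau(e_1+e_n)+(1-\tau)c$, with $c$ in the weak$^*$-closed convex hull of the rest. This gives $\norm{z-(e_1+e_n)}\leq6\bigl(1-\varphi(z)\bigr)$, so $e_1+e_n$ is strongly exposed. The other points of $\Gamma$ are handled by symmetry. Since $\Gamma$ spans a dense subspace, your Lemma~\ref{lem:exposed} argument then goes through.
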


\begin{proof}
We start by defining the predual of $X$, which will be an equivalent renorming of $c_0$. We denote by $e_n$ the $n$-th canonical basis vector of $c_0$, and set
$$
\Gamma = \set{e_1+e_n,e_1-e_n,-e_1+e_n,-e_1-e_n \,:\, n\geq 2}
$$
and $D=\set{y\in B_{c_0} \,:\, y_1=0}$.
Then we let $\norm{\cdot}_Y$ be the norm on $c_0$ whose unit ball is the set
$$
B_Y = \cl{\conv}\pare{\Gamma \cup D} ,
$$
and put $Y=(c_0,\norm{\cdot}_Y)$. To see that this does in fact define an equivalent renorming of $c_0$, it suffices to verify that
\begin{equation}
\label{eq:Y eqv norm}
\tfrac{1}{2}B_{c_0} \subset B_Y \subset B_{c_0} .
\end{equation}
The rightmost inclusion in \eqref{eq:Y eqv norm} is clear as $\Gamma\subset B_{c_0}$. For the other one, note that
$$
e_1 = \tfrac{1}{2}(e_1+e_2) + \tfrac{1}{2}(e_1-e_2) \in \conv(\Gamma)
$$
and similarly $-e_1\in\conv(\Gamma)$, hence $te_1\in\conv(\Gamma)$ for all $t\in [-1,1]$. Thus, for any $y\in B_{c_0}$,
$$
\tfrac{1}{2}y = \tfrac{1}{2}y_1e_1 + \tfrac{1}{2}(y-y_1e_1) \in B_Y .
$$

Now let $X=Y^*$. Then $X$ is an equivalent renorming of $\ell_1$. In fact, we may describe its norm $\norm{\cdot}_X$ explicitly: for any $x\in\ell_1$, we claim that
\begin{equation}
\label{eq:X eqv norm}
\norm{x}_X = \max\set{ \sum_{n\geq 2}\abs{x_n} , \abs{x_1}+\max_{n\geq 2}\abs{x_n} } .
\end{equation}
Indeed, note that
$$
\norm{x}_X = \sup_{y\in B_Y}\duality{x,y} = \sup_{y\in B_Y}\sum_{n=1}^\infty x_ny_n = \sup\set{ \sum_{n=1}^\infty x_ny_n : y\in\Gamma \cup D } .
$$
If $y\in D$, the sum is bounded by $\sum_{n\geq 2}\abs{x_n}$ and this value can be approached by taking $y_n=\mathrm{sign}(x_n)$ for $2\leq n\leq N$ and $y_n=0$ for $n>N$. Similarly, for $y\in\set{e_1\pm e_n,-e_1\pm e_n}$, $n\geq 2$ the maximum value of $\duality{x,y}$ is $\abs{x_1}+\abs{x_n}$, and taking the maximum over $n$ yields \eqref{eq:X eqv norm}.

Let $V$ be the subspace of $X$ given by
$$
V = \set{x\in X : x_1=0} .
$$
It is clear that $V$ has codimension $1$ in $X$, and moreover $V=\ker(e_1)$ with $e_1\in Y$ so it is weak$^*$ closed. Applying \eqref{eq:X eqv norm} we see that $\norm{x}_X=\norm{x}_1$ for any $x\in V$, so $V$ is isometric to $\ell_1$. Finally, note that the mapping $P:X\to V$ given by $Px=x-x_1e_1^*$ (where $e_n^*$ denotes the $n$-th canonical basis vector of $\ell_1$) is a linear projection onto $V$, and $\norm{Px}_X = \norm{Px}_1 = \sum_{n\geq 2}\abs{x_n} \leq \norm{x}_X$, so $\norm{P}=1$. Thus $V$ is $1$-complemented. This shows that $X$ and $V$ satisfy items (a) and (c).

In order to establish condition (b), we prove that every element of $\Gamma$ is a strongly exposed point of $Y$. Fix $n\geq 2$ and set $y_n=e_1+e_n\in\Gamma$ and $x_n=\frac{1}{3}e_1^*+\frac{2}{3}e_n^*\in B_X$. It is clear that $\duality{x_n,y_n}=1$. We also have $\duality{x_n,y}\leq\frac{1}{3}$ for any $y\in\Gamma\setminus\set{y_n}$ and $\duality{x_n,y}\leq\frac{2}{3}$ for any $y\in D$. It follows that $\duality{x_n,c}\leq\frac{2}{3}$ for any $c$ belonging to the set
$$
C = \wscl{\conv}( \Gamma\cup D\setminus\set{y_n} ) \subset B_{X^*} .
$$
Note that $B_{X^*}=\conv(C\cup\set{y_n})$. Indeed, $C$ is weak$^*$ compact, therefore so is $\conv(C\cup\set{y_n})$, and the latter contains $\Gamma$ and $D$, so it contains $B_Y$ and thus $B_{X^*}$ by Goldstine's theorem. Therefore, any element $z\in B_{X^*}$ can be written as $z=ty_n+(1-t)c$ for some $t\in [0,1]$ and $c\in C$. For such an expression, we have
$$
\duality{x_n,z} = t\duality{x_n,y_n} + (1-t)\duality{x_n,c} \leq t+\tfrac{2}{3}(1-t) = 1-\tfrac{1}{3}(1-t)
$$
hence
$$
1-t \leq 3(1-\duality{x_n,z}) .
$$
It follows that
$$
\norm{z-y_n}_{X^*} = (1-t)\norm{y_n-c}_{X^*} \leq 2(1-t) \leq 6(1-\duality{x_n,z}) .
$$
In particular, if $\duality{x_n,z}\to 1$ then $\norm{z-y_n}\to 0$. That is, $x_n$ strongly exposes $y_n$. A similar argument shows that $y'_n=e_1-e_n$ is strongly exposed by $x'_n=\frac{1}{3}e_1^*-\frac{2}{3}e_n^*\in B_X$, and $-y_n,-y'_n$ are also strongly exposed points by symmetry.

Note that
$$
e_n = \tfrac{1}{2}(e_1+e_n) + \tfrac{1}{2}(-e_1+e_n) \in \conv(\Gamma)
$$
for all $n\geq 2$ and we already saw that $e_1\in\conv(\Gamma)$ as well, thus $\cl{\lspan}(\Gamma)=Y$. Lemma~\ref{lem:exposed} now implies that every concrete predual of $X$ contains $\Gamma$ and hence $Y$. By Fact~\ref{fact:concrete-preduals}, $Y$ is the strongly unique predual of $X$. This ends the proof.
\end{proof}

\section*{Acknowledgments}
F.~Vico was supported by the Simons Foundation under project
SFI-FI-CCM-Grant-00030182.

\end{document}